\newcommand{\bC}{\mathbb{C}}
\newcommand{\bF}{\mathbb{F}}
\newcommand{\bM}{\mathbb{M}}
\newcommand{\bQ}{\mathbb{Q}}
\newcommand{\bR}{\mathbb{R}}
\newcommand{\bZ}{\mathbb{Z}}
\newcommand{\unit}{\mathbf{1}}
\newcommand{\fg}{\mathfrak{g}}
\newcommand{\fh}{\mathfrak{h}}
\newcommand{\fH}{\mathfrak{H}}
\newcommand{\fm}{\mathfrak{m}}
\newcommand{\fn}{\mathfrak{n}}
\newcommand{\simto}{\overset{\sim}{\to}}
\DeclareMathOperator{\ad}{ad}
\DeclareMathOperator{\Aut}{Aut}
\DeclareMathOperator{\Hom}{Hom}
\DeclareMathOperator{\id}{id}
\DeclareMathOperator{\rank}{rank}
\DeclareMathOperator{\Rep}{Rep}
\DeclareMathOperator{\Sym}{Sym}
\DeclareMathOperator{\Tr}{Tr}
\newcommand{\uRep}{\underline{\operatorname{Rep}}}
\newtheorem{thm}{Theorem}[section]
\newtheorem{prop}[thm]{Proposition}
\newtheorem{lem}[thm]{Lemma}
\newtheorem{cor}[thm]{Corollary}
\newtheorem{conj}[thm]{Conjecture}
\newtheorem*{conjj}{Conjecture}
\newtheorem*{thmm}{Theorem}
\theoremstyle{definition}
\newtheorem{defn}[thm]{Definition}
\theoremstyle{remark}
\newtheorem{rem}[thm]{Remark}
\begin{document}

\title{Monstrous Moonshine for integral group rings}
\author{Scott Carnahan, Satoru Urano}

\begin{abstract}
We propose a conjecture that is a substantial generalization of the genus zero assertions in both Monstrous Moonshine and Modular Moonshine.  Our conjecture essentially asserts that if we are given any homomorphism to the complex numbers from a representation ring of a group ring for a subgroup of the Monster, we obtain a Hauptmodul by applying this homomorphism to a self-dual integral form of the Moonshine module.  We reduce this conjecture to the genus-zero problem for ``quasi-replicable'' functions, by applying Borcherds's integral form of the Goddard-Thorn no-ghost theorem together with some analysis of the Laplacian on an integral form of the Monster Lie algebra.  We prove our conjecture for cyclic subgroups of the Monster generated by elements in class 4A, and we explicitly determine the multiplicities for a decomposition of the integral Moonshine Module into indecomposable modules of the integral group rings for these groups.
\end{abstract}

\maketitle

\tableofcontents

\section{Introduction}

In this paper, we propose a new monstrous Moonshine conjecture, and prove some partial results.  Our conjecture captures the Hauptmodul assertions in both Conway and Norton's Monstrous Moonshine conjecture and Ryba's Modular Moonshine conjecture, replacing traces or Brauer characters of automorphisms with maps from representation rings.  Our generalization allows for new phenomena from non-semisimple representations over rings of integers, and we consider arbitrary subgroups of the Monster instead of just cyclic groups.  

\subsection{Short history}

Let us briefly recall the history of this question.  In 1978, McKay observed a numerical relationship between the coefficients in the $q$-expansion of the modular function $j(\tau) = q^{-1} + 744 + 196884q + \cdots$ (where $q = e^{2\pi i \tau}$ and $\tau$ lies in the complex upper half-plane $\fH$), and the dimensions $1, 196883, \ldots$ of the smallest irreducible representations of the Monster simple group $\bM$.  Thompson then produced more numerical evidence for a relationship, and suggested the existence of a natural graded representation of $\bM$ whose graded dimensions yield the $q$-expansion of $j-744$ \cite{T79b}.  Furthermore, he suggested that the graded traces of elements of $\bM$ on such a representation may be interesting.  Conway and Norton \cite{CN79} and Thompson \cite{T79a} developed this idea into the following concrete conjecture :

\begin{conjj} (Monstrous Moonshine)
There is a natural infinite dimensional graded representation $V = \bigoplus_{n=0}^\infty V_n$ of $\bM$, such that for any $g \in \bM$, the series $\sum \Tr(g|V_n)q^{n-1}$ is the $q$-expansion of a modular function $T_g(\tau)$ on the complex upper half-plane $\fH$ that satisfies the following properties:
\begin{enumerate}
\item $T_g(\tau)$ is invariant under a discrete subgroup $\Gamma_g$ of $SL_2(\bR)$ that contains $\Gamma_0(12|g|)$.
\item $T_g(\tau)$ generates the field of meromorphic functions on the smooth compactification of the quotient Riemann surface $\fH/\Gamma_g$, i.e., $T_g(\tau)$ is a Hauptmodul for $\Gamma_g$.  In particular, $\fH/\Gamma_g$ is a genus zero complex curve with finitely many punctures.
\end{enumerate}
\end{conjj}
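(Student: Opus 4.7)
The plan is to produce the required representation concretely from a vertex operator algebra, extract an infinite-dimensional Lie algebra by a no-ghost construction, read off recursions on the graded traces from the Weyl--Kac--Borcherds denominator identity, and finally match the resulting $q$-series with the Hauptmoduls proposed by Conway--Norton.

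First I would construct $V = \bigoplus_{n \geq 0} V_n$ as the moonshine module vertex operator algebra of Frenkel--Lepowsky--Meurman, which carries a natural action of $\bM$ and whose graded dimension $\sum_n \dim(V_n)\, q^{n-1}$ equals $j(\tau) - 744$. Tensoring $V$ with the vertex algebra of the even unimodular Lorentzian lattice $II_{1,1}$ and applying Borcherds's version of the Goddard--Thorn (no-ghost) theorem to the subspace of physical states, I would obtain a generalized Kac--Moody Lie algebra $\fm$, the Monster Lie algebra, $\bZ^2$-graded by the lattice with root space in degree $(m,n) \neq 0$ canonically isomorphic to $V_{mn}$ as an $\bM$-module.

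Next I would exploit the BKM structure of $\fm$. The Weyl--Kac--Borcherds denominator identity yields the product formula
\[
p^{-1}\prod_{m > 0,\, n \in \bZ}(1 - p^m q^n)^{c(mn)} \;=\; j(\sigma) - j(\tau),
\]
where $c(n) = \dim V_n$ and $p = e^{2\pi i \sigma}$. Replacing ordinary dimensions by twisted traces $\Tr(g \mid V_n)$, that is, taking the $g$-equivariant Euler characteristic of the homology of a nilpotent subalgebra of $\fm$, I would obtain a twisted denominator identity for each $g \in \bM$. These identities translate into quadratic recursion relations of the ``completely replicable function'' type on the coefficients of $T_g(\tau) = \sum_n \Tr(g \mid V_n) q^{n-1}$, coupled to the analogous series $T_{g^k}$ for the powers of $g$.

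Finally I would verify that the twisted denominator recursions, together with finitely many low-order coefficients, determine each $T_g$ uniquely, and then check case by case that the Hauptmoduls in the Conway--Norton list satisfy these same recursions with matching initial data. The main obstacle is precisely this last matching step: the Lie-theoretic machinery produces replicability for free, but one still has to inspect all of the Conway--Norton genus-zero functions to confirm that they are completely replicable with the right low coefficients, and to compute the initial traces $\Tr(g \mid V_n)$ for small $n$ directly from the $\bM$-module structure of $V^\natural$. The group-theoretic identification of these initial traces, rather than the Lie-theoretic construction, is where the bulk of the genuine case-by-case work sits.
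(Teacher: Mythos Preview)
The paper does not itself prove this statement. It appears in the historical introduction (Section~1.1) as the original Conway--Norton conjecture, and the paper simply records that ``In 1992, Borcherds showed that the Conway--Norton conjecture holds for $V^\natural$'' with a citation to \cite{B92}. There is therefore no proof in the paper to compare your proposal against.

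That said, your outline is an accurate summary of Borcherds's 1992 argument: construct $V^\natural$ via Frenkel--Lepowsky--Meurman, apply the no-ghost theorem to $V^\natural \otimes V_{I\!I_{1,1}}$ to get the Monster Lie algebra, derive twisted denominator identities yielding complete replicability of the $T_g$, and finish by matching finitely many low-order coefficients against the Conway--Norton tables. You have also correctly identified where the case-by-case work sits. It is worth noting that the paper's own contribution runs a structurally parallel program in the integral setting---an integral no-ghost theorem (Theorem~\ref{thm:no-ghost}), an integral Monster Lie algebra (Theorem~\ref{thm:properties-of-monster-Lie-algebra}), and a Laplacian/homology argument in place of the denominator identity---but arrives only at the weaker notion of \emph{quasi}-replicability, precisely because Adams operations over general rings lack the multiplicativity that makes complete replicability work over $\bC$.
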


In fact, after undertaking a massive computation, Conway and Norton assigned candidate modular functions to each of the 194 conjugacy classes in $\bM$.  In 1984, Frenkel, Lepowsky, and Meurman \cite{FLM84} announced a candidate representation, constructed in \cite{FLM85}, and they refined their construction in \cite{FLM88} to show that their representation, which they named $V^\natural$, is endowed with a vertex operator algebra structure whose automorphism group is precisely $\bM$.  Furthermore, they showed that for elements centralizing a 2B involution, the graded traces were equal to the candidate modular functions.  In 1992, Borcherds showed that the Conway-Norton conjecture holds for $V^\natural$, using (among other tools) a string-theoretic quantization functor \cite{B92}.

Mathematicians were asking about Moonshine phenomona for groups other than the Monster from near the beginning of the history of the subject: Conway and Norton describe some preliminary hints with subgroups of $\bM$ in \cite{CN79}, and Queen \cite{Q81} did substantial computations showing that some simple combinations of representations of sporadic groups produce the first terms in many Hauptmoduln.  These new phenomena now have two explanations:
\begin{enumerate}
\item Norton's 1987 Generalized Moonshine conjecture \cite{N87} asserts the existence of projective representations $V(g)$ of centralizers of elements in $\bM$ such that traces yield Hauptmoduln, and many of these centralizers often have sporadic subquotients.  This was proved in \cite{C12} following substantial advances about twisted $V^\natural$-modules in \cite{DLM97} and their fusion in \cite{vEMS}, together with a proof in the Baby Monster case \cite{H03}.
\item Ryba's 1994 Modular Moonshine conjecture \cite{R96} asserts the existence of vertex algebras over the finite field $\bF_p$ with actions of centralizers of order $p$ elements in $\bM$, such that the Brauer characters of $p$-regular elements are Hauptmoduln.  This was proved for odd primes $p$ in \cite{BR96}, \cite{B98}, \cite{B99} and for the prime 2 in \cite{BR96} under an assumption that was resolved in \cite{C17}.
\end{enumerate}
We note that $\bM$ plays a central role in both explanations - that is, while looking for phenomena away from the Monster, Conway, Norton, and Queen did calculations whose results suggested the Monster was still fundamentally connected but in a more general context.

\subsection{Motivation for this paper}

The work in this paper arose from our investigations into generalizations of the Modular Moonshine conjecture.  This conjecture originally asserted the existence of vertex algebras over $\bF_p$ with well-behaved symmetries, but was reinterpreted in \cite{BR96} as a claim about Tate cohomology of prime order automorphisms of a self-dual integral form of $V^\natural$.  Naturally, our first steps involved studying Tate cohomology of composite order automorphisms, and we were guided by a bold conjecture proposed by Borcherds that would unify Generalized Moonshine and Modular Moonshine.  This conjecture asserted the existence of projective representations of centralizers $C_\bM(g)$ over cyclotomic integer rings $\bZ[e^{2\pi i/n}]$, that produced both the $g$-twisted $V^\natural$-modules $V(g)$ when base changed to $\bC$ and finite characteristic vertex algebras ${}^gV$ when base changed to a finite ring.  Recently, the second author of this paper showed that Borcherds's conjecture is too ambitious as stated \cite{U20}.  In particular, there is a Fricke element of order 8 with nonzero degree 1 Tate cohomology, and this contradicts one of the immediate consequences of the conjecture, namely that Fricke elements have vanishing Tate cohomology in odd degree.  More generally, some analysis of Tate cohomology of elements of order 6 strongly suggests the non-existence of modules that behave properly in different residue characteristics.

Further investigation revealed that many of the nice properties satisfied by Tate cohomology of cyclic groups of prime order fail rather badly when considering composite order.  For example, we see in \cite{B98} Corollary 2.2 that there is a superspace isomorphism $\hat{H}^*(A \otimes B) \cong \hat{H}^*(A) \otimes \hat{H}^*(B)$ for $\bZ_p[\bZ/p\bZ]$-modules $A$ and $B$, but it is easy to show that this fails for representations of cyclic groups of composite order.  In particular, functions like ``total dimension of Tate cohomology'' fail to be homomorphisms on the representation ring.  We eventually concluded that the important part of Borcherds's investigation in \cite{B98} lies not in the Tate cohomology, but in the ring homomorphisms from representation rings.  It is after all the homomorphism property that allows Borcherds to employ powerful methods like his integral no-ghost theorem and Hodge theoretic analysis of the Monster Lie algebra.

We therefore consider arbitrary subrings $R$ of $\bC$, and arbitrary subgroups $G$ of $\bM$, and consider the power series induced by applying homomorphisms from representation rings to the complex numbers.  The ``representation ring'' $\Rep_R(G)$ is the group completion of the semiring of stable isomorphism classes of $R[G]$-modules that are $R$-torsion-free of finite rank, where addition is given by direct sum, and multiplication is given by tensor product.  When $X$ is a set of $R$-torsion-free $R[G]$-modules, we write $\Rep^X_R(G)$ for the subring generated by the elements of $X$ and their Adams operations.  We will focus on the case where $G$ is a subgroup of $\bM$, $R$ is a subring of $\bC$, and $V^\natural_R = V^\natural_\bZ \otimes R$ is the self-dual $R$-form for the Monster vertex operator algebra constructed in \cite{C17}.  In this case, we have a subring $\Rep^\natural_R(G) \subseteq \Rep_R(G)$, defined as the smallest subring containing the indecomposable $R[G]$-module direct summands of $V^\natural_{n,R}, (n \geq 0)$ and closed under Adams operations.

Returning to Borcherds's work, when $G$ is a cyclic subgroup of $\bM$ of prime order $p$ and $R \cong \bZ_p$ (or an unramified extension), one has exactly three homomorphisms $\phi: \Rep_R(G) \to \bC$, namely dimension (i.e., trace of identity), the trace of a generator of $G$, and the total dimension of Tate cohomology.  Borcherds found that applying any of these maps to the graded pieces of $V^\natural_R$ yields a Hauptmodul.  We therefore feel it is natural to propose the following conjecture:

\begin{conj} \label{conj:main} (Monstrous Moonshine for group rings)
Let $R$ be a subring of $\bC$, and let $G$ be a subgroup of $\bM$.  Then for any ring homomorphism $\phi: \Rep^\natural_R(G) \to \bC$, the ``generalized McKay-Thompson series''
\[ T_\phi(\tau) := \sum_{n \geq 0} \phi(V^\natural_{n,R}) q^{n-1} \]
is the $q$-expansion of a finite level Hauptmodul.  That is, the holomorphic function $T_\phi(\tau)$ on the complex upper half plane $\fH$ is invariant under a discrete subgroup $\Gamma_\phi < SL_2(\bR)$ containing $\Gamma_0(N)$ for some $N$, such that the quotient $\fH/\Gamma_\phi$ is genus zero with finitely many punctures, and the field of meromorphic functions on the smooth compactification is generated by $T_\phi$.  Here, $V^\natural_{n,R}$ is the $n$-eigenspace for the action of $L_0$ on $V^\natural_\bZ \otimes R$.
\end{conj}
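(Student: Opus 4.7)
The strategy is to run Borcherds's proof of Monstrous Moonshine integrally over $R$, and only apply $\phi$ functorially at the end. Since $V^\natural_{0,R} \cong R$ as a trivial $R[G]$-module and $V^\natural_{1,R} = 0$, the series has shape $T_\phi(\tau) = q^{-1} + O(q)$ and so defines a holomorphic function on $\fH$; the task is then to produce enough functional equations on $T_\phi$ to force it to be a Hauptmodul.

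First, I would construct an integral form $\fm_R$ of the Monster Lie algebra carrying its $G$-action. Starting from the self-dual integral form $V^\natural_R$ of \cite{C17} and an integral form of the lattice vertex algebra $V_{II_{1,1}}$, I would apply Borcherds's integral Goddard--Thorn no-ghost theorem to $V^\natural_R \otimes V_{II_{1,1},R}$. The output should be a $\bZ \oplus \bZ$-graded Lie algebra over $R$ with a $G$-action whose $(m,n)$-th graded piece (for $(m,n) \neq (0,0)$) is isomorphic as an $R[G]$-module to $V^\natural_{mn,R}$. The delicate technical input here is a Laplacian/Hodge-theoretic analysis on $V^\natural_R \otimes V_{II_{1,1},R}$, which is what is needed to identify the physical states with graded pieces integrally and to rule out stray torsion that could otherwise obstruct a clean $R[G]$-module identification.

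Next, I would derive functional equations for $T_\phi$ from the denominator identity of $\fm_R$, promoted to the representation ring. Classically the denominator identity is a product formula whose right-hand side is $j(\sigma) - j(\tau)$; its $G$-equivariant lift, made integral and enriched by Adams operations, should live in $\Rep^\natural_R(G)[[p,q]]$. Applying the ring homomorphism $\phi$ converts this $R[G]$-valued identity into a scalar identity relating $T_\phi(\sigma)$ and $T_\phi(\tau)$ through symmetric functions built from $\phi$ applied to Adams operations on the $V^\natural_{n,R}$. The fact that $\phi$ respects both addition and multiplication (and hence Adams operations, which are polynomial in exterior and symmetric powers) is exactly what enables this passage; a mere additive character would not suffice. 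The output is a system of ``quasi-replicability'' relations generalizing Conway--Norton's completely replicable equations.

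The remaining step, and the main obstacle, is the genus-zero problem for quasi-replicable functions: showing that any holomorphic $q$-series of the form $q^{-1} + O(q)$ satisfying these relations is a Hauptmodul for a discrete subgroup of $SL_2(\bR)$ commensurable with $SL_2(\bZ)$. In the classical completely replicable case this is the deep theorem of Cummins--Gannon, and an appropriate analogue should hold here, but it is a nontrivial analytic-combinatorial problem that must be attacked separately. For specific conjugacy classes such as 4A one can instead proceed directly: decompose each $V^\natural_{n,R}$ into indecomposable $R[G]$-summands, enumerate the finitely many ring homomorphisms $\phi$, and match each resulting series against known Hauptmoduln from the Conway--Norton and Modular Moonshine tables.
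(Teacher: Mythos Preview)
The statement you are attempting to prove is labeled a \emph{conjecture} in the paper, and the paper does not prove it. What the paper establishes is precisely the reduction you outline: it constructs $\fm_R$, derives recursions on the coefficients of $T_\phi$ (``quasi-replicability''), and then stops, leaving the genus-zero problem for quasi-replicable functions open. Your proposal is therefore not a proof but an accurate summary of the paper's strategy, and you correctly flag the genus-zero step as the unresolved obstacle. No analogue of Cummins--Gannon is currently available for quasi-replicable functions, and the paper treats this as an open problem rather than something one can simply invoke.

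Two technical points in your sketch diverge from what the paper actually achieves and are worth correcting. First, the integral no-ghost theorem does \emph{not} identify the $(m,n)$ root space with a homogeneous piece of $V^\natural_R$ for all nonzero $(m,n)$: the identification $\fm_{m,n,R} \cong V^\natural_{1+mn,R}$ (note the index is $1+mn$, not $mn$) is proved only when $\gcd(m,n) \in R^\times$, and the paper explicitly labels the non-primitive root spaces ``mysterious''. This limitation is exactly why the resulting relations are \emph{quasi}-replicable rather than replicable, so glossing over it obscures the reason the final step is hard. Second, the Laplacian/Hodge-theoretic analysis is not part of the no-ghost argument; it enters later, acting on the Chevalley--Eilenberg complex $\bigwedge^* \fn_R$ to show that the homology in degree $(m,n)$ lies in the torsion ideal of $\Rep_R(G)$ whenever $(m-1)n$ is coprime to $|G|$. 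The no-ghost step itself is handled by a transverse-space and self-duality argument.
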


Attacking this conjecture presents several challenges beyond what we see in monstrous Moonshine and Modular Moonshine.  The biggest problem is that modules over the group ring $R[G]$ can be very complicated.  For general $R$ and $G$, the group ring has wild representation type, meaning the problem of classifying modules is at least as hard as classifying all finite dimensional modules over all finitely generated associative algebras over a field (see e.g., \cite{KL01} for a discussion of wildness for algebras over rings).  We are somewhat saved by our restriction to $R$-torsion free modules, as the Jordan-Zassenhaus theorem (in the number ring case) gives us finitely many isomorphism types in a fixed rank.  Furthermore, by applying homomorphisms to $\bC$, we can pass from studying isomorphism classes of modules to a coarser equivalence that collapses ideal classes.  Finally, by restricting to $\Rep^\natural_R(G)$, i.e., classes that appear in $V^\natural_R$, we reduce the complexity of the problem even further.  However, we still don't know if the smallest cases $\bZ[\bZ/2\bZ \times \bZ/2\bZ]$ and $\bZ[\bZ/8\bZ]$ of rings with infinitely many indecomposable $\bZ$-free modules restrict to finite collections.

A second problem, as we will see in the discussion of results, is that we don't have a way to control our functions as well as Borcherds did in \cite{B92} and \cite{B98}.  The notion of ``quasi-replicability'' that we obtain is new and not well-understood.

\subsection{Main results}

Our first important theorem is an enhancement of Borcherds's integral no-ghost theorem \cite{B99}:

\begin{thmm} \textbf{\ref{thm:no-ghost}:}
Let $R$ be a subring of $\bC$, and let $V$ be a unitarizable $U^+(vir)_R$-module of half central charge 12 equipped with an action of a group $G$ that commutes with the Virasoro action, and with a Virasoro-invariant $G$-invariant bilinear form, such that $V$ is self-dual.  Let $\beta$ be an element of $I\!I_{1,1} \otimes R$ such that $(\beta,\gamma) \in R^\times$ for some $\gamma \in I\!I_{1,1}\otimes R$, let $\pi^{1,1}_\beta$ be the Heisenberg module attached to $\beta$, and let $H = V \otimes \pi^{1,1}_\beta$.  Let $P^1 = \{ v \in H | L_0 v = v, L_i v = 0\, \forall i > 0 \}$, and let $N^1$ be the radical of the inner product on $P^1$.  Then, $V^{1-(\beta,\beta)/2} \cong P^1/N^1$ as $R[G]$-modules with $G$-invariant bilinear form.
\end{thmm}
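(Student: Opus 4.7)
The plan is to enhance Borcherds's integral no-ghost theorem of \cite{B99} by making every step explicitly $R[G]$-equivariant so as to descend over an arbitrary subring $R \subseteq \bC$.  I would construct an $R[G]$-linear map $\Phi: V^{1-(\beta,\beta)/2} \to P^1$ via a light-cone / DDF-style construction, then verify it is an isometric embedding whose image is a direct complement to $N^1$.  The role of the hypothesis $(\beta,\gamma)\in R^\times$ is to guarantee that $\beta$ is a primitive element of $I\!I_{1,1} \otimes R$, which in turn allows a $GL_2(R)$-change of the standard null basis $\rho,\sigma$ of $I\!I_{1,1}$ so that $c := (\beta,\rho) \in R^\times$; this unit is the only denominator that will appear.

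For each $v \in V^{1-(\beta,\beta)/2}$ I would define $\Phi(v) \in H$ recursively by starting with $v \otimes e^\beta$ and adding correction terms in $V \otimes R[\rho(-1),\rho(-2),\ldots]\,e^\beta$ chosen so as to enforce $L_n \Phi(v) = 0$ for all $n \geq 1$.  Using $[L_n,\rho(-m)] = m\rho(n-m)$ together with the formula for $L_n$ acting on $V \otimes e^\beta$, the recursion expresses each successive coefficient as a polynomial in $c^{-1}$ and the Virasoro modes acting on $V$, with no further denominators.  Since $G$ acts trivially on $\pi^{1,1}_\beta$, the map $\Phi$ is $G$-equivariant, commutes with $L_0$, and by construction lands in $P^1$.

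To finish I would verify that $\Phi$ induces an isometric isomorphism $V^{1-(\beta,\beta)/2} \simto P^1/N^1$.  Injectivity modulo $N^1$ follows because the induced bilinear form on $\im \Phi$ has leading term equal to the given $G$-invariant form on $V^{1-(\beta,\beta)/2}$ times a unit of $R$; self-duality then forces $\Phi(v) \notin N^1$ for $v \neq 0$.  Surjectivity amounts to decomposing any $w \in P^1$ as $\Phi(v)$ plus a combination of $L_{-1}$-descendants of weight-zero physical vectors, which together span $N^1$.  The main obstacle is precisely this integral surjectivity: the classical argument invokes positivity of the Shapovalov form on transverse oscillators to split $P^1 = \im \Phi \oplus N^1$ over $\bR$, whereas over a general $R$ one must produce an explicit unimodular change-of-basis matrix from a natural $R$-basis of $P^1$ to the DDF basis, whose determinant is a power of $c$, by a careful induction on level that tracks every occurrence of $c^{-1}$ through the commutator calculations.
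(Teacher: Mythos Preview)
Your DDF/light-cone construction is essentially what the paper does in Theorem~\ref{thm:self-duality-for-transverse-space} to identify $V$ with the transverse space $T$, and your instinct that the only denominator should be the pairing with a null direction is correct.  However, there is a genuine gap in the very first reduction step.  You claim that $(\beta,\gamma)\in R^\times$ for \emph{some} $\gamma$ lets you change the null basis so that $(\beta,\rho)\in R^\times$ for a \emph{null} $\rho$.  This is false.  Since $R$ is a domain, the null cone of $I\!I_{1,1}\otimes R$ is just the two isotropic lines $Re_1\cup Re_2$; writing $\beta=ae_1+be_2$, the hypothesis says only that the ideal $(a,b)$ equals $R$, whereas $(\beta,\rho)$ for any null $\rho$ is an $R$-multiple of $a$ or of $b$, neither of which need be a unit.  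Concretely, $R=\bZ$, $\beta=2e_1+3e_2$ satisfies the hypothesis (take $\gamma=-e_1+e_2$) but admits no null $\rho$ with $(\beta,\rho)\in\bZ^\times$.  Your recursion for $\Phi$ genuinely needs $\rho$ null (so that $[L_n,\rho(-m)]=m\,\rho(n-m)$ with no central term and the correction subspace is $L_{>0}$-stable), so the argument as written does not go through when $(\beta,\beta)\neq 0$.

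The paper handles exactly this obstruction by Zariski descent: when $\beta$ has nonzero norm it takes the two null vectors $\gamma_1=e_1,\gamma_2=e_2$, runs your transverse-space argument over each localization $R_i=R[(\beta,\gamma_i)^{-1}]$, and then glues using $(a,b)=R$.  The remaining ``main obstacle'' you flag---showing $P^1=\operatorname{im}\Phi\oplus N^1$ integrally rather than just over $\bC$---is disposed of in the paper not by an explicit unimodular change of basis but by the short self-duality argument of Lemma~\ref{lem:split-projection-for-self-dual-modules}: once $T\cong\operatorname{im}\Phi$ is self-dual, the inner-product projection $P^1\to T\otimes\bC$ must land in $T^\vee=T$, and its kernel is $N^1$.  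If you add the descent step and replace your proposed determinant-tracking induction by this lemma, your outline becomes the paper's proof.
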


Applying this result to $V^\natural_\bZ$, we obtain a Lie algebra over $\bZ$ whose primitive root spaces are identified with homogeneous pieces of the vertex algebra.

\begin{thmm} \textbf{\ref{thm:properties-of-monster-Lie-algebra}:}
There exists an integral form $\fm_\bZ$ of the Monster Lie algebra, satisfying the following properties:
\begin{enumerate}
\item $\fm_{\bZ}$ is a $\bZ \times \bZ$-graded Borcherds-Kac-Moody Lie algebra over $\bZ$, with an invariant bilinear form that identifies the degree $(0,0)$ subspace with the even unimodular lattice $I\!I_{1,1}$.
\item $\fm_{\bZ}$ has a faithful Monster action by homogeneous inner-product-preserving Lie algebra automorphisms, such that the $\bZ[\bM]$-module structure of the degree $(m,n)$ root space depends only on the value of $mn$, when $(m,n)$ is primitive (because it is isomorphic to $V^\natural_{1+mn,\bZ}$).
\item The positive subalgebra $\fn_\bZ = \bigoplus_{m \geq 1, n \geq -1} \fm_{m,n,\bZ}$ has $\bZ_{\geq 0} \times \bZ_{\geq 0} \times \bZ$-graded homology $H^*(\fn_\bZ,\bZ) = \bigoplus_{i \geq 0, m \geq 0, n \in \bZ} H^i(\fn_\bZ,\bZ)_{m,n}$.  Furthermore, for any subgroup $G$ of $\bM$, and any $i \geq 0$, and all degrees $(m,n)$ such that $(m-1)n$ is coprime to $|G|$, the representation $H^i(\fn_\bZ,\bZ)_{m,n}$ of $G$ lies in the torsion ideal of the representation ring $\Rep_\bZ(G)$.  In particular, applying any ring homomorphism $\phi: \Rep^\natural_\bZ(G) \to \bC$ to $H^i(\fn_\bZ,\bZ)_{m,n}$ yields zero.
\end{enumerate}
\end{thmm}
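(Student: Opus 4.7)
Adapt Borcherds's vertex-algebra construction of the Monster Lie algebra to the integral setting, substituting $V^\natural_\bZ$ from \cite{C17} for $V^\natural$ and invoking the integral no-ghost theorem (Theorem \ref{thm:no-ghost}). For each primitive $\beta = (m, n) \in I\!I_{1,1}$, apply the no-ghost theorem with $V = V^\natural_\bZ$ and Heisenberg module $\pi^{1,1}_\beta$ to identify $P^1/N^1$ with $V^\natural_{1+mn, \bZ}$, which immediately gives the primitive-root part of (2). Define $\fm_\bZ$ to be the direct sum of these primitive root spaces, the analogous spaces $P^1/N^1$ attached to non-primitive $\beta$, and the Cartan $I\!I_{1,1}$ in degree $(0,0)$. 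The Lie bracket is induced from the degree-zero vertex-algebra bracket $[u, v] = u_0 v$ on $V^\natural_\bZ$ tensored with the lattice vertex algebra of $I\!I_{1,1}$, and preserves the integral form because both tensor factors are integrally defined. The Monster acts by integral VOA automorphisms on $V^\natural_\bZ$ and trivially on the lattice factor, yielding a faithful, homogeneous, inner-product-preserving action on $\fm_\bZ$; the BKM axioms follow from the no-ghost identification together with the root-multiplicity data supplied by the Weyl-Kac-Borcherds denominator formula.

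\textbf{Plan for part (3).} Compute $H^*(\fn_\bZ, \bZ)$ via the Chevalley-Eilenberg complex $\Lambda^\bullet \fn_\bZ$, which is $\bZ$-torsion-free in each multi-degree. Rationally, the Kostant-type analysis using the Borcherds denominator formula for the Monster Lie algebra shows that $H^i(\fn_\bQ, \bQ)_{m, n}$ is supported only at degrees with $m \in \{0, 1\}$ or $n = 0$. The coprimality hypothesis $\gcd((m-1)n, |G|) = 1$ forces $m \neq 0, 1$ and $n \neq 0$, excluding exactly these low-degree strata, so rationally every class vanishes in the relevant degrees and $H^i(\fn_\bZ, \bZ)_{m,n}$ is $\bZ$-torsion by universal coefficients. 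To refine to the torsion ideal of $\Rep_\bZ(G)$, equip $\fn_\bZ$ with the $G$-invariant inner product from (2) and form the Laplacian $\Delta = d d^* + d^* d$ on $\Lambda^\bullet \fn_\bZ$. Track how the Adams operations $\psi^d$ (arising from the non-primitive root spaces) interact with the $G$-character theory: the coprimality hypothesis ensures that every prime at which $\Delta$ fails to be invertible divides $|G|$, so $\Delta$ is invertible on the orthogonal complement of its kernel after inverting $|G|$. Hence $H^i(\fn_\bZ, \bZ)_{m,n}$ is annihilated by a power of $|G|$, its class is torsion in $\Rep_\bZ(G)$, and any $\phi: \Rep^\natural_\bZ(G) \to \bC$ kills it since $\bC$ is $\bZ$-torsion-free.

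\textbf{Main obstacle.} The hard part is the integral Hodge/Laplacian analysis, and specifically showing that the denominators of $\Delta^{-1}$ are controlled exactly by $|G|$ on the degrees with $\gcd((m-1)n,|G|) = 1$. Over $\bQ$ the vanishing is classical; the integral refinement requires pinning down how non-primitive root spaces decompose as $\bZ[G]$-modules through the Adams operations and how that structure interacts with the Chevalley-Eilenberg differential. This extends Borcherds's prime-order arguments in \cite{B98} to arbitrary finite subgroups of $\bM$, which is the central technical step needed to reach the Conjecture \ref{conj:main} framework.
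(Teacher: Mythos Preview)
Your outline for parts (1) and (2) is correct and matches the paper: define $\fm_\bZ = OCQ_\bZ(V^\natural_\bZ \otimes V_{I\!I_{1,1},\bZ})$ and invoke Theorem~\ref{thm:no-ghost} at primitive $\beta$ to get $\fm_{m,n,\bZ} \cong V^\natural_{1+mn,\bZ}$.

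Part (3), however, has a genuine gap, and the ``main obstacle'' paragraph misidentifies what is actually needed. The crucial fact you are missing is that the Laplacian $\Delta = d\delta + \delta d$ acts on the degree-$(m,n)$ piece of $\bigwedge^*\fn_\bZ$ as the \emph{scalar} $(m-1)n$. This is a Kostant-type computation (Theorem~\ref{thm:Laplacian} and Corollary~\ref{cor:Laplacian-on-monster-Lie-algebra} in the paper): for any BKM algebra with Weyl vector $\rho$, one has $\Delta = \tfrac{1}{2}(\alpha,\alpha+2\rho)$ on the $\alpha$-weight space, and for $\fm_\bZ$ with $\rho = (-1,0)$ this is $(m-1)n$. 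Once you know $\Delta$ is this scalar, Corollary~\ref{cor:cancellation-for-order-coprime-to-laplacian} finishes immediately: since $(m-1)n$ is coprime to $|G|$, it becomes a unit in $R' = \bigcap_{P \supset |G|\bZ} \bZ_P$, the complex $(\bigwedge^i\fn_{m,n}, d)$ splits over $R'$, and the homology class lies in $\ker(\Rep_\bZ(G) \to \Rep_{R'}(G))$, which is precisely the torsion ideal by Theorem~\ref{thm:can-invert-good-primes}.

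Your proposed refinement has the arithmetic backwards. You write that ``every prime at which $\Delta$ fails to be invertible divides $|G|$'' and that the homology is ``annihilated by a power of $|G|$''; in fact the opposite holds. The primes obstructing invertibility of $\Delta$ are those dividing $(m-1)n$, and by hypothesis these are \emph{coprime} to $|G|$, so the homology is annihilated by $(m-1)n$, not by $|G|$. Consequently one must invert elements coprime to $|G|$ (i.e.\ pass to $R'$), not invert $|G|$. Moreover, ``$H^i$ is $\bZ$-torsion'' (which your rational-vanishing step does establish) is a different statement from ``$[H^i]$ lies in the torsion ideal of $\Rep_\bZ(G)$'': the latter means the class dies under base change to $R'$, and for that you need the scalar Laplacian argument, not mere finiteness. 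Finally, Adams operations and the structure of non-primitive root spaces play no role in part~(3); they enter only later when one expands $\bigwedge_{-q}\fn$ to derive quasi-replicability.
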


From this homological vanishing theorem we obtain a complicated collection of relations on the coefficients of the power series $T_\phi$.  We call the power series satisfying these relations ``quasi-replicable'' (see Definition \ref{defn:quasi-replicable}), and our main theorem is the following:

\begin{thmm} \textbf{\ref{thm:main}:}
Let $R$ be a subring of $\bC$, and let $G$ be a subgroup of $\bM$.  Then, for any ring homomorphism $\phi: \Rep_R^\natural(G) \to \bC$, the power series $T_\phi$ is quasi-replicable of exponent $|G|$.  
\end{thmm}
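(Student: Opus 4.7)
The plan is to derive the quasi-replicability relations for $T_\phi$ from an Euler-Poincaré identity for the Lie algebra homology of $\fn_\bZ$ by pushing forward along $\phi$. Concretely, the Chevalley-Eilenberg resolution of the trivial module over $\fn_\bZ$ yields, in a suitable completion of the Grothendieck ring of $\bZ \times \bZ$-graded $\bZ[\bM]$-modules, an identity equating a formal determinant built from the root spaces $\fm_{m,n,\bZ}$ with the alternating sum $\sum_i (-1)^i [H^i(\fn_\bZ,\bZ)_{m,n}]$. This is the integral $\bZ[\bM]$-equivariant Weyl-Kac-Borcherds denominator identity for the integral Monster Lie algebra.

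First I would expand the left-hand side using $\det(1-x) = \exp\bigl(-\sum_{k \geq 1} \psi^k(x)/k\bigr)$ in $\lambda$-ring notation, so that each factor becomes a formal expression in Adams operations applied to $\fm_{m,n,\bZ}$. By Theorem \ref{thm:properties-of-monster-Lie-algebra}(2), on primitive bidegrees the root space is isomorphic to $V^\natural_{1+mn,\bZ}$ as a $\bZ[\bM]$-module, and imprimitive root spaces are determined by these via the Lie bracket; since $\Rep^\natural_R(G)$ is closed under Adams operations by definition, every class appearing after expansion lies in $\Rep^\natural_R(G)$ upon restriction to $G$, so $\phi$ can be applied term by term.

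Second, I would apply $\phi$ to both sides. By Theorem \ref{thm:properties-of-monster-Lie-algebra}(3), $\phi$ annihilates every $H^i(\fn_\bZ,\bZ)_{m,n}$ with $\gcd((m-1)n, |G|)=1$, so the right-hand side reduces to a sum over the finitely many exceptional bidegrees $(m,n)$ with $\gcd((m-1)n,|G|) > 1$ in each fixed total degree. The left-hand side becomes an infinite product whose exponents are polynomial expressions in the coefficients $c_k = \phi(V^\natural_{k,\bZ})$ of $T_\phi(\tau) = \sum_{n \geq 0} c_n q^{n-1}$. Extracting coefficients of the bigrading variables produces an infinite family of recurrences on the $c_k$ whose right-hand side vanishes outside the exceptional locus controlled by $|G|$. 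Matching this system against Definition \ref{defn:quasi-replicable} should then be essentially tautological, since that definition is calibrated to record precisely these recurrences, with the parameter $|G|$ playing its intended role as the modulus cutting out the nontrivial bidegrees.

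The main obstacle is the second step, where two issues require genuine care. First, $\fn_\bZ$ is Borcherds-Kac-Moody rather than Kac-Moody, so the denominator identity has contributions from imaginary simple roots, and $H^i(\fn_\bZ,\bZ)_{m,n}$ is nonzero in infinitely many bidegrees; everything must be organized inside a completion of $\Rep_\bZ(\bM)$ in which the relevant infinite sums and products converge formally and the signs in the Euler-Poincaré computation are handled correctly. Second, the Adams-operation expansion of the left-hand side must be arranged so that, after applying $\phi$, it reproduces the specific polynomial form in the $c_k$ used in Definition \ref{defn:quasi-replicable}; this is where Theorem \ref{thm:no-ghost} enters decisively, via its identification of the primitive root spaces with $V^\natural_{1+mn,\bZ}$ as $\bZ[G]$-modules equipped with invariant form. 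Once this bookkeeping is settled, the theorem follows by combining Theorems \ref{thm:no-ghost} and \ref{thm:properties-of-monster-Lie-algebra} with the denominator identity and reading off coefficients.
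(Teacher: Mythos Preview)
Your approach is essentially the same as the paper's, though framed more abstractly. The paper's proof is very short: it sets $f^{[m]}(\tau) = T_\phi^{[m]}(\tau) := \sum_{ad=m} \frac{1}{a} \sum_{n \geq -1} \phi(\psi^a(\fm_{d,n,R})) q^{an}$ and then checks the four conditions in Definition~\ref{defn:quasi-replicable} one at a time. Conditions 1--3 are read off directly from this formula together with the no-ghost identification $\fm_{m,n,R} \cong V^\natural_{1+mn,R}$ for primitive $(m,n)$ (Corollary~\ref{cor:description-of_monster-Lie-algebra}); condition 4 is exactly Corollary~\ref{cor:vanishing-coefficients}, which is Theorem~\ref{thm:vanishing-in-exponential} after the reindexing in Proposition~\ref{prop:feature-of-exterior}. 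You should be equally explicit about exhibiting the functions $f^{[m]}$ and verifying conditions 1--3, rather than treating the match with Definition~\ref{defn:quasi-replicable} as a tautology to be sorted out later.

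Two further comments. First, your detour through homology and an Euler--Poincar\'e comparison is unnecessary and slightly risky. The paper never compares the complex to its homology; it applies the Laplacian directly to the Chevalley--Eilenberg complex (Corollary~\ref{cor:Laplacian-on-monster-Lie-algebra}) and then invokes Corollary~\ref{cor:cancellation-for-order-coprime-to-laplacian} to obtain $\sum_i (-1)^i \phi\big((\wedge^i \fn)_{a,b}\big) = 0$ whenever $(a-1)b$ is coprime to $|G|$. This bypasses any question about $\bZ$-torsion in $H^i(\fn_\bZ,\bZ)$ or whether those homology classes even lie in $\Rep^\natural$. Second, your sentence ``imprimitive root spaces are determined by these via the Lie bracket'' does not justify what you use it for: being generated by brackets makes $\fm_{m,n,\bZ}$ a \emph{subquotient} of a tensor product of $V^\natural$-pieces, not a virtual sum of direct summands, so this argument does not place it inside $\Rep^\natural_R(G)$. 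The paper's definition of $T_\phi^{[m]}$ also applies $\phi$ to $\psi^a(\fm_{d,n,R})$ for arbitrary $(d,n)$ without further comment, so you are in good company, but you should not claim to have resolved this point.
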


Computational evidence suggests the quasi-replicability relations completely determine the coefficients beyond a finite set, and we conjecture that quasi-replicable power series are either Hauptmoduln or finite Laurent polynomials in the power series variable $q$.  Quasi-replicability is a weaker condition than Conway and Norton's notion of replicability, and in order to understand it, it seems we need more than simple modifications of the strategies for proving modularity of replicable functions (e.g., in \cite{C08} Corollary 5.4, or \cite{CG97} Theorem 1.1 for completely replicable functions).

As we mentioned, our conjecture generalizes the Hauptmodul claims in both Monstrous Moonshine and Modular Moonshine.  Monstrous Moonshine concerns the case $R = \bC$, because the ring homomorphisms $\Rep_\bC(G) \to \bC$ are precisely the traces of elements of $G$.  Modular Moonshine concerns the case $R \cong \bZ_p$ and $G$ is generated by an element $g$ of prime order $p$ and a commuting element $h$ of order $k$ coprime to $p$ - more precisely, the ring homomorphisms ``trace of $gh$'' and ``Brauer character of $h$ on total Tate cohomology of $g$'' coincide when $g$ is Fricke.  We also have an explicit result that goes beyond these cases:

\begin{thmm} \ref{thm:4A}
If $R$ is any subring of $\bC$, and $G$ is cyclic subgroup of $\bM$ generated by an element in conjugacy class 4A, then $T_\phi$ is one of the Hauptmoduln $T_{1A} = j-744$, $T_{2B} = \frac{\eta(\tau)^{24}}{\eta(2\tau)^{24}} + 24$, or $T_{4A} = \frac{\eta(2\tau)^{48}}{\eta(\tau)^{24}\eta(4\tau)^{24}}-24$.  Furthermore, the homogeneous pieces of $V^\natural_\bZ$ are given as a sum of indecomposable modules for $G$ with multiplicities explicitly given by linear combinations of these Hauptmoduln.
\end{thmm}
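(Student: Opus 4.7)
The plan is to reduce Theorem \ref{thm:4A} to a finite computation combining three inputs: the quasi-replicability of $T_\phi$ (of exponent $4$) coming from Theorem \ref{thm:main}, the Heller--Reiner classification of indecomposable $\bZ$-free modules for $\bZ[\bZ/4\bZ]$ (which gives only finitely many isomorphism classes), and the classical proof of Monstrous Moonshine \cite{B92} for the three conjugacy classes $1A$, $2B$, $4A$ of $\bM$.

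First I would enumerate the ring homomorphisms $\phi: \Rep^\natural_R(G) \to \bC$.  Over $R = \bC$, any such $\phi$ factors through $\Rep_\bC(G)$ and is the evaluation of characters at an element of $G$.  Since the $4A$ class is closed under inversion in $\bM$, the elements $g$ and $g^{-1}$ give the same trace on any $\bM$-representation, so only three distinct $T_\phi$ arise, namely the McKay--Thompson series of $1$, $g^2$, and $g$, which by classical Monstrous Moonshine are identified with the three Hauptmoduln $T_{1A}$, $T_{2B}$, $T_{4A}$ listed in the theorem.  For a general subring $R \subseteq \bC$, the Heller--Reiner finiteness forces $\Rep^\natural_R(G)$ into a finitely generated commutative $\bZ$-algebra, so only finitely many ring homomorphisms to $\bC$ exist.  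I would then use the quasi-replicability relations---which tightly constrain the Fourier coefficients of $T_\phi$---together with a direct low-degree comparison to rule out any $T_\phi$ outside the three listed Hauptmoduln.

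For the multiplicity statement, let $M_1, \ldots, M_\ell$ enumerate the $\bZ$-free indecomposable $\bZ[G]$-modules that can appear in some $V^\natural_{n,\bZ}$, and set $n_M(\tau) := \sum_n [V^\natural_{n,\bZ} : M]\, q^{n-1}$.  Every ring homomorphism $\phi$ yields a linear equation $T_\phi = \sum_M \phi(M)\, n_M(\tau)$ among power series.  Assembling the equations from the three $\bC$-character evaluations together with additional homomorphisms arising from Tate cohomology at $p = 2$ yields a linear system whose solution expresses each $n_M(\tau)$ as an explicit $\bQ$-linear combination of $T_{1A}$, $T_{2B}$, $T_{4A}$.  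Non-negativity and integrality of the resulting multiplicity coefficients are then verified directly from the known eta-quotient $q$-expansions.

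The main obstacle I anticipate is controlling the full collection of ring homomorphisms $\Rep^\natural_R(G) \to \bC$ beyond the three $\bC$-character valuations: Tate cohomology at $p = 2$ (the only prime dividing $|G|$) produces potentially exotic homomorphisms whose associated power series are not immediately identifiable with one of the three Hauptmoduln.  Resolving this is where I expect to rely most heavily on the interplay between the multiplicative structure of $\Rep^\natural_R(G)$---coming from explicit tensor-product decompositions of the Heller--Reiner indecomposables---and the quasi-replicability relations, which together should be stringent enough to force $T_\phi$ into the stated finite list.  Once this enumeration is complete, the explicit multiplicity formulas follow by inverting the linear system.
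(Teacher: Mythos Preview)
Your plan has a genuine gap: you are relying on quasi-replicability to eliminate the ``exotic'' homomorphisms, but the paper never shows that quasi-replicability is strong enough to do this, and in fact its treatment of the neighboring classes $4B$ and $6A$ (Proposition~6.6 and Theorem~\ref{thm:6A}) demonstrates that the same strategy leaves an undetermined extra function that is only bounded, not identified.  So the step ``quasi-replicability relations \ldots\ together with a direct low-degree comparison to rule out any $T_\phi$ outside the three listed Hauptmoduln'' is exactly the step that fails in the analogous cases and cannot be expected to succeed here without new input.

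The paper's argument runs in the opposite order from yours: rather than enumerating homomorphisms and then trying to pin down $T_\phi$, it first cuts down the list of indecomposable $\bZ[G]$-summands that can occur in $V^\natural_{n,\bZ}$, and only then observes that the subring they generate admits just three homomorphisms to $\bC$, all of which are traces.  Two ingredients do the cutting.  First, restriction to $\langle g^2\rangle$ combined with Modular Moonshine for class $2B$ (Theorems~5.2, 5.3 of \cite{BR96}) forces the $\bZ[\langle g^2\rangle]$-decomposition of $V^\natural_{n,\bZ}$ to contain only $\bZ$ and $\bZ[H]$ for $n$ even and only $I$ and $\bZ[H]$ for $n$ odd; via the restriction table in Lemma~\ref{lem:restriction-from-order-4-to-order-2} this already excludes $C^E$ and $C^{AB}$ and restricts the possibilities by parity.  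Second, and this is the point you are missing entirely, the special arithmetic coincidence peculiar to class $4A$ that the coefficients of $T_{4A}$ and $T_{2B}$ agree in absolute value with sign $(-1)^n$ means $\Tr(g|V^\natural_n) = (-1)^n \Tr(g^2|V^\natural_n)$; comparing with the trace values of the remaining candidates kills $B,E$ for $n$ even and $C,C^B$ for $n$ odd.  Only $A$, $D$, $C^A$ survive, the subring they span is $\bZ[d,c]/(d^2-4d,\,c^2-2d-1,\,(c-3)d)$, and this ring visibly has exactly three $\bC$-points, namely the traces of $1$, $g^2$, $g$.  The multiplicity formula then follows by inverting the $3\times 3$ system, as you correctly anticipated.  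No quasi-replicability is used anywhere in this argument.
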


In some cases we obtain incomplete results:

\begin{thmm} \ref{thm:6A}
If $R$ is any subring of $\bC$, and $G$ is a cyclic subgroup of $\bM$ generated by an element in conjugacy class 6A then $T_\phi$ is either one of the Hauptmoduln $T_{1A}$, $T_{2A}$, $T_{3A}$, $T_{6A}$, or a quasi-replicable power series whose coefficients are bounded below by those of $T_{6A}$ and above by $T_{3A}$.
\end{thmm}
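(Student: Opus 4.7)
The plan is to combine the quasi-replicability output of Theorem~\ref{thm:main}, which forces $T_\phi$ to be quasi-replicable of exponent $6$, with a classification of the ring homomorphisms $\phi: \Rep_R^\natural(G) \to \bC$.  The powers $g^0, g, g^2, g^3$ lie in Monster conjugacy classes $1A, 6A, 3A, 2A$ (with $g^4, g^5$ conjugate to $g^2, g$ on real representations), so any $\phi$ that evaluates as a character of a power of $g$ gives one of the four Hauptmoduln $T_{1A}, T_{2A}, T_{3A}, T_{6A}$.  It therefore remains to bound $T_\phi$ for the non-classical homomorphisms.

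The first step is to reduce to a local subring.  A ring homomorphism $\Rep_R^\natural(G) \to \bC$ factors through the completion of $R$ at a single prime, so up to this reduction we may take $R \subseteq \bZ_p$ for some prime $p$, or $R \supseteq \bZ[1/6]$.  In the $\bZ[1/6]$ case $R[G]$ is semisimple and $\phi$ is a classical character evaluation, so $T_\phi$ is one of the four Hauptmoduln above.  For $p \in \{2,3\}$, the Chinese remainder decomposition $G = \langle g^3 \rangle \times \langle g^2 \rangle$ makes the $p'$-part of $G$ semisimple over $\bZ_p$, so $\Rep_{\bZ_p}(G)$ factors as $\Rep_{\bZ_p}(\langle g^3 \rangle) \otimes \Rep_{\bZ_p}(\langle g^2 \rangle)$ and $\phi$ correspondingly factors as a tensor of ring homomorphisms.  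By Borcherds's analysis of the prime order case in \cite{B98}, the $p$-factor admits three ring homomorphisms to $\bC$ (the two classical traces and the total Tate cohomology dimension) while the $p'$-factor admits two.  This gives six ring homomorphisms over each of $\bZ_2$ and $\bZ_3$, four of which are classical trace evaluations recovering the four listed Hauptmoduln, and two of which are new and involve the Tate cohomology of the Sylow $p$-subgroup of $G$.

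The coefficient bounds $T_{6A} \leq T_\phi \leq T_{3A}$ for the new $\phi$ come from analyzing each indecomposable $\bZ_p[G]$-lattice summand of $V^\natural_{n,\bZ_p}$ separately.  Using the tensor factorization and Borcherds's formula for Tate cohomology dimensions (which assigns $0$ to projective indecomposables and $1$ to non-projective $\bZ_p$-free indecomposables of $\bZ_p[\bZ/p\bZ]$), one shows that the contribution of each indecomposable summand to $T_\phi$ is sandwiched between the corresponding contributions to $T_{6A}$ and to $T_{3A}$.  Summing over summands via Krull--Schmidt yields the coefficient-wise inequality stated in the theorem.

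The main obstacle, and the reason this theorem is weaker than Theorem~\ref{thm:4A}, is the absence of an explicit decomposition of $V^\natural_{n,\bZ}$ into indecomposable $\bZ[G]$-lattices for all $n$.  In the 4A case a short enumeration of indecomposable $\bZ[\bZ/4\bZ]$-lattices, combined with low-degree multiplicity calculations, forces $T_\phi$ to coincide with one of the listed Hauptmoduln; for 6A the indecomposable lattice classification is substantially richer, and our low-degree data do not suffice to determine whether the new $T_\phi$ must coincide with $T_{3A}$ or $T_{6A}$ or may interpolate between them.  Closing this gap appears to require either a finer $\bZ[\bZ/6\bZ]$-lattice analysis in each graded piece of $V^\natural_\bZ$, or a general genus-zero classification of the quasi-replicable series bounded by $T_{3A}$ and $T_{6A}$, neither of which is currently available.
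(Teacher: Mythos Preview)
Your reduction to a single prime is where the argument breaks.  The claim ``a ring homomorphism $\Rep_R^\natural(G) \to \bC$ factors through the completion of $R$ at a single prime'' is not what Corollary~\ref{cor:homs-factor-through} gives: that result only lets you invert primes coprime to $|G|=6$, landing in the semi-local ring $R' = \bZ_{(2)} \cap \bZ_{(3)}$, not in $\bZ_2$ or $\bZ_3$ separately.  And the distinction matters here.  The paper's classification (Proposition~\ref{prop:classification-of-6A-homomorphisms}) finds exactly one non-classical homomorphism over $\bZ$, sending $(\Gamma_1,\Gamma(1,2),\Gamma(1,3),\Gamma(1,2,3),D)$ to $(1,0,0,1,0)$.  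This homomorphism does \emph{not} extend to $\Rep_{\bZ_2}(G)$ or $\Rep_{\bZ_3}(G)$: over $\bZ_2$ the module $\Gamma(1,2,3)$ splits as $\Gamma(1,2)\oplus\Gamma_3$ (the $1$--$3$ extension becomes trivial since $3\in\bZ_2^\times$), and then additivity forces $\phi(\Gamma(1,2,3)) = \phi(\Gamma(1,2)) + \phi(\Gamma_3) = 0 + (\phi(\Gamma(1,3)) - 1) = -1$, not $1$.  So your local analysis over $\bZ_p$ would miss exactly the homomorphism the theorem is about.

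The second gap is in the bound argument.  Your sandwich inequality ``the contribution of each indecomposable summand to $T_\phi$ lies between its contributions to $T_{6A}$ and $T_{3A}$'' is false for arbitrary indecomposable $\bZ[G]$-lattices: for instance a module with $\Gamma_6$ as a constituent can have Tate-type value exceeding its $\Tr(g^2)$ value.  What makes the bound work in the paper is Lemma~\ref{lem:classification-of-6A-indecomposables}, which uses Modular Moonshine for the $2A$ and $3A$ powers of $g$ to show that only the five indecomposables $\Gamma_1,\Gamma(1,2),\Gamma(1,3),\Gamma(1,2,3),D$ can appear in $V^\natural_\bZ$.  On that restricted list the fifth homomorphism takes values $(1,0,0,1,0)$, which are visibly sandwiched between $\Tr(g)=(1,0,0,-1,0)$ and $\Tr(g^2)=(1,2,0,1,0)$ coordinate by coordinate.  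You never invoke this restriction, so your bound is unjustified.  The paper's route---classify indecomposable summands over $\bZ$ via the graph description and Modular Moonshine, compute the tensor table (Lemma~\ref{lem:tensor-products-for-6A}), and then read off the five homomorphisms directly---avoids both problems.
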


We expect that this extra power series is in fact equal to $T_{6A}$, and that more generally, the analogous result holds for class $pq$A where $p$ and $q$ are distinct primes.

The situation with our main theorem is similar to the result we would get from Borcherds's proof of the Monstrous Moonshine conjecture if we stopped just before the last step, i.e., if we didn't know anything about the character table of $\bM$.  Without the explicit comparison between the head characters conjectured in \cite{CN79} and the first few graded pieces of $V^\natural$ in section 9 of \cite{B92}, we would only have the result that the McKay-Thompson series are completely replicable functions of finite order.  The main theorem of \cite{CG97} then implies the McKay-Thompson series are either Hauptmoduln or ``modular fictions'', that is, functions of the form $q^{-1} + \epsilon q$ for $\epsilon$ either 0 or a 24th root of unity.  One difference in the complex case is that we can eliminate the modular fictions by appealing to the modularity results of \cite{DLM97}, which imply the expansions of trace functions at other cusps must come from characters of twisted modules.  We may therefore hope that the work of Dong-Li-Mason on twisted modules admits a generalization that could yield a way to eliminate non-modular functions without explicit computation.

Our computational experiments suggest several other phenomena involving the $\bZ[\bM]$-module structure of $V^\natural_\bZ$, and we describe these in the open problems section at the end of this paper.

\section{Preparation}

\subsection{Representation rings}

We begin with the results we need in the theory of integral representations.  All we need from this theory can be found in \cite{R70}, which is a nice overview of the state of the art as of 1970.

\begin{defn}
Let $R$ be an integral domain, and $G$ a finite group.  We define the \textbf{representation category} $\uRep_R(G)$ to be the monoidal category of $R[G]$-modules that are $R$-torsion-free of finite rank, with tensor product as monoidal structure.  We define the \textbf{representation ring} of $R[G]$ (also known as the \textbf{Green ring}), written $\Rep_R(G)$, as the group completion of the semiring of isomorphism classes in $\uRep_R(G)$, together with the operations of direct sum as addition and tensor product as multiplication.  We define the subring $\Rep^\natural_R(G) \subseteq \Rep_R(G)$ as the smallest subring containing the indecomposable $R[G]$-module direct summands of $V^\natural_{n,R}, (n \geq 0)$ and closed under Adams operations (see Definition \ref{defn:adams-operations}).
\end{defn}

For general $R$ and large $G$, it is difficult to get any grasp on the structure of $\Rep_R(G)$.  For example, $\bZ[G]$ has infinitely many isomorphism types of indecomposable representations whenever $G$ has a non-cyclic Sylow subgroup \cite{HR62}, or the order of $G$ is a multiple of a nontrivial cube \cite{HR63}.  Furthermore, $R[G]$-modules do not necessarily satisfy the Krull-Schmidt property, i.e., we do not necessarily have unique decomposition into a sum of indecomposable modules \cite{R61}.  

However, we can avoid some of the problems when we consider ring homomorphisms $\Rep^R(G) \to \bC$, because torsion classes are necessarily sent to zero.  Indeed, the next result implies that the torsion is precisely what we lose by inverting all primes outside the support of $|G|$.

\begin{thm} \label{thm:can-invert-good-primes} (Main theorem of \cite{R67}, see also section 16 of \cite{R70})
Let $R$ be a Dedekind domain whose quotient field $F$ is an algebraic number field, let $G$ be a finite group, and let $R' = \bigcap_{P \supset |G|R} R_P$ be the semi-local ring given by inverting all elements coprime to $|G|$.  Then, the additive map $\Rep_R(G) \to \Rep_{R'}(G)$ given by base change has kernel equal to the subgroup of torsion elements.  Furthermore, this subgroup is an ideal in $\Rep_R(G)$, and equal to the finite set whose elements are $[R[G]] - [M]$ as $M$ ranges over all $R$-torsion-free $R[G]$-modules satisfying $M_P \cong R_P[G]$ for all primes $P$ of $R$.
\end{thm}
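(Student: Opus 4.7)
The plan is to establish three things in sequence: well-definedness of the base change map $\rho: \Rep_R(G) \to \Rep_{R'}(G)$ as an additive homomorphism, equality of $\ker\rho$ with the torsion subgroup, and explicit identification of this subgroup with the described set of genus classes. Well-definedness is automatic from flatness of the localization $R'$ over $R$: base change preserves $R$-torsion-freeness, finite rank, and direct sums, so the map descends to $\Rep_R(G)$.

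For the easier containment in the kernel equality, I would observe that $R'$, being obtained from a Dedekind domain by inverting all elements coprime to $|G|$, is a semi-local Dedekind domain and hence a PID, so $R'[G]$ is a semi-local order and the Krull-Schmidt theorem holds for $R'$-torsion-free $R'[G]$-modules of finite rank. Consequently $\Rep_{R'}(G)$ is a free abelian group on isomorphism classes of indecomposables, hence torsion-free, so every torsion element of $\Rep_R(G)$ lies in $\ker\rho$. For the reverse containment, suppose $M \otimes R' \cong N \otimes R'$. Then $M_P \cong N_P$ at every prime $P$ dividing $|G|$. For primes $P$ coprime to $|G|$, the discriminant of $R[G]$ over $R$ divides $|G|^{|G|}$ and so becomes a unit, forcing $R_P[G]$ to be a maximal $R_P$-order in $F[G]$; since lattices over a maximal order are determined up to isomorphism by their extension to $F$, and $M \otimes F \cong N \otimes F$, we obtain $M_P \cong N_P$ at these primes as well. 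Thus $M$ and $N$ lie in the same genus. The Jordan-Zassenhaus theorem then supplies finiteness of the genus, and a standard argument in the reduced projective class group of $R[G]$ produces a positive integer $h$ with $h([M]-[N]) = 0$ in $\Rep_R(G)$.

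The ideal property follows because the genus relation is preserved by tensor product with any fixed lattice: if $M_P \cong N_P$ for all $P$, then $(X \otimes M)_P \cong (X \otimes N)_P$ for all $P$, so torsion absorbs arbitrary products. To identify torsion classes with differences of the form $[R[G]] - [L]$ where $L$ lies in the genus of $R[G]$, I would use a stabilization argument: every locally free $R[G]$-module of rank $r$, after adding a suitable free summand, becomes a module in the genus of some $R[G]^s$; combining the Grothendieck-group relations $[R[G]^s] = s[R[G]]$ with cancellation in the locally free class group reduces every torsion class to a single difference of the advertised form. Finiteness of the collection is then Jordan-Zassenhaus applied specifically to the genus of $R[G]$.

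The main obstacle is this last step, since Krull-Schmidt fails over $R[G]$ and we must convert arbitrary genus-class differences into canonical form referencing $R[G]$ itself. This is where the full theory of orders over Dedekind domains enters in Reiner's original argument \cite{R67}, in particular the description of locally free classes of $R[G]$ via the reduced idele class group of the center of $F[G]$; it is here that the algebraic number field hypothesis on $F$ becomes essential, as the required finiteness ultimately reduces to finiteness of global class groups of number fields.
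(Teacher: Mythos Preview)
The paper does not give its own proof of this theorem: it is quoted as the main theorem of \cite{R67} (with a pointer to the survey \cite{R70}) and used as a black box. So there is nothing in the paper to compare your argument against beyond the citation itself.

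That said, your outline tracks Reiner's strategy fairly well through the genus step, but becomes imprecise exactly where the real work lies. Two points:

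First, in reducing an arbitrary kernel element $[M]-[N]$ to the form $[R[G]]-[L]$ with $L$ in the genus of $R[G]$, you appeal to a ``stabilization argument'' for \emph{locally free} modules. But $M$ and $N$ need not be locally free; they are merely lattices in the same genus. What is actually used here is Roiter's exchange lemma: if $M$ and $N$ lie in the same genus, then $M \oplus L \cong N \oplus R[G]$ for some $L$ in the genus of $R[G]$, whence $[M]-[N] = [R[G]]-[L]$ directly. Your detour through the locally free class group is not the right mechanism.

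Second, finiteness of the set $\{[R[G]]-[L]\}$ (via Jordan--Zassenhaus) does not by itself force each element to be torsion; you need that this finite set is closed under addition, i.e., forms a subgroup. This again comes from Roiter's lemma applied to $L_1 \oplus L_2$ for $L_1, L_2$ in the genus of $R[G]$: one gets $L_1 \oplus L_2 \cong R[G] \oplus L_3$, hence $([R[G]]-[L_1]) + ([R[G]]-[L_2]) = [R[G]]-[L_3]$. Once you have a finite subgroup, torsion is immediate.

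Your final paragraph correctly flags this step as the hard one and defers to Reiner, which is essentially what the paper does; but the specific tools you name (reduced projective class group, id\`ele description of the center) are peripheral to the combinatorial core, which is the exchange lemma.
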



\begin{cor} \label{cor:homs-factor-through}
Let $R$ be a subring of $\bC$, let $G$ be a finite group, and let $R' = \bigcap_{P \supset |G|R} R_P$.  Then, any ring homomorphism from $\Rep^\natural_R(G)$ to $\bC$ factors through a homomorphism to $\Rep^\natural_{R'}(G)$ given by base change on objects.
\end{cor}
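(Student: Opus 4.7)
The plan is to combine Theorem \ref{thm:can-invert-good-primes} with the fact that the additive group of $\bC$ is torsion-free. First, I would verify that base change $M \mapsto M \otimes_R R'$ induces a ring homomorphism $b \colon \Rep^\natural_R(G) \to \Rep^\natural_{R'}(G)$. Base change takes an $R$-torsion-free finitely generated $R[G]$-module to an $R'$-torsion-free finitely generated $R'[G]$-module, is multiplicative (tensor product commutes with base change), and intertwines Adams operations. To see that it sends generators to $\Rep^\natural_{R'}(G)$: if $M$ is an indecomposable $R[G]$-summand of $V^\natural_{n,R}$, then $M \otimes_R R'$ is a direct summand of $V^\natural_{n,R'}$, and since $R'$ is semilocal, Krull--Schmidt holds for $R'$-torsion-free finitely generated $R'[G]$-modules, so $M \otimes_R R'$ decomposes uniquely as a sum of indecomposable $R'[G]$-summands of $V^\natural_{n,R'}$, each of which is a designated generator of $\Rep^\natural_{R'}(G)$.

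Next I would apply Theorem \ref{thm:can-invert-good-primes}: the kernel of the base change map $\Rep_R(G) \to \Rep_{R'}(G)$ equals the torsion subgroup of $\Rep_R(G)$. Intersecting with the subring $\Rep^\natural_R(G)$, the kernel of $b$ is contained in the torsion subgroup of $\Rep^\natural_R(G)$. Because $\bC$ is torsion-free as a $\bZ$-module, any ring homomorphism $\phi \colon \Rep^\natural_R(G) \to \bC$ must annihilate all $\bZ$-torsion, hence all of $\ker(b)$. Therefore $\phi$ descends uniquely to a ring homomorphism $\bar\phi$ on the quotient $\Rep^\natural_R(G)/\ker(b)$, and this quotient embeds (via $\bar{b}$) into $\Rep^\natural_{R'}(G)$, giving the required factorization through a homomorphism to $\Rep^\natural_{R'}(G)$.

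The step that requires the most care is the first one: confirming that base change really restricts to the special subrings $\Rep^\natural$. This uses Krull--Schmidt over the semilocal ring $R'$ to guarantee that the indecomposable $R'[G]$-summands of $V^\natural_{n,R'}$ are well-defined generators and that $M \otimes_R R'$ splits into such summands. One also has to note the functorial compatibility of the Adams operations with the base change functor $- \otimes_R R'$, so that the subring generated by indecomposable summands and closed under Adams operations on the $R$ side is carried into its counterpart on the $R'$ side. After this setup, the rest of the argument is a formal consequence of Theorem \ref{thm:can-invert-good-primes} and the absence of $\bZ$-torsion in $\bC$.
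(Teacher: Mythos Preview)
Your approach is essentially the same as the paper's: use Theorem \ref{thm:can-invert-good-primes} to identify the kernel of base change with torsion, and then observe that any ring homomorphism to $\bC$ annihilates torsion. The paper carries this out slightly differently, extending $\phi$ (non-uniquely) to a group homomorphism on all of $\Rep_R(G)$ before applying Theorem \ref{thm:can-invert-good-primes}, rather than intersecting down to $\Rep^\natural_R(G)$ as you do; your extra care in checking that base change actually lands in $\Rep^\natural_{R'}(G)$ via Krull--Schmidt is a point the paper leaves implicit.

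There is, however, a genuine gap in your argument. Theorem \ref{thm:can-invert-good-primes} is stated only for $R$ a Dedekind domain whose fraction field is a number field, whereas the corollary allows $R$ to be an arbitrary subring of $\bC$. You invoke the theorem directly without justifying this, and your appeal to Krull--Schmidt over $R'$ likewise presumes $R'$ is semilocal, which is not automatic for general $R$. The paper handles this by first observing that $\Rep^\natural_R(G)$ and $\Rep^\natural_{R'}(G)$ are generated by objects defined over subrings of number fields, so that one may reduce to the Dedekind/number-field setting before applying Theorem \ref{thm:can-invert-good-primes}. You need to insert this reduction step (or some equivalent justification) at the outset; once that is done, the rest of your argument goes through.
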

\begin{proof}
Both $\Rep^\natural_R(G)$ and $\Rep^\natural_{R'}(G)$ are generated by objects defined over subrings of number fields, so it suffices to consider the case $R$ is a Dedekind domain whose quotient field $F$ is an algebraic number field.  Any homomorphism from $\Rep^\natural_R(G)$ to $\bC$ extends non-uniquely to a group homomorphism from $\Rep_R(G)$, and since torsion lies in the kernel, Theorem \ref{thm:can-invert-good-primes} yields a homomorphism $\Rep_{R'}(G) \to \bC$, that, when restricted to $\Rep^\natural_{R'}(G)$ is what we want.
\end{proof}

\begin{cor} \label{cor:cancellation-for-order-coprime-to-laplacian}
Let $R$ be a subring of $\bC$, and let $G$ be a finite group.  Suppose we are given a collection $\{ V_i \}_{i \in \bZ}$ of objects in $\uRep_R(G)$ with only finitely many nonzero, together with $R[G]$-module maps $d_i: V_i \to V_{i+1}$ and $\delta_i: V_i \to V_{i-1}$ satisfying $d_{i+1}d_i = \delta_{i-1}\delta_i = 0$ and $\delta_{i+1}d_i + d_{i-1}\delta_i = k \id_{V_i}$ for some $k \in R$ that is invertible in $R' = \bigcap_{P \supset |G|R} R_P$ (i.e., coprime to $|G|$), and all $i \in \bZ$.  Then, the homology of the complex $(V_i,d_i)$ lies in the torsion ideal of $\Rep_R(G)$, and after base change to $R'$, the homology vanishes.  Furthermore, for any ring homomorphism $\phi$ from a subring of $\Rep_R(G)$ containing all $V_i$ to $\bC$, the alternating sum $\sum_i (-1)^i \phi(V_i)$ vanishes.
\end{cor}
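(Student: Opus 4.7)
The hypothesis $\delta_{i+1} d_i + d_{i-1} \delta_i = k \cdot \id_{V_i}$ says precisely that $\delta_\bullet$ is a chain homotopy witnessing that $k\cdot\id$ is null-homotopic on $(V_\bullet, d_\bullet)$. After inverting $k$ (i.e., passing to $R'$, in which by hypothesis $k$ is a unit), the rescaled maps $s_i := k^{-1}\delta_i$ form a genuine chain contraction. My plan is to turn this homotopy into a direct-sum decomposition over $R'$, then transfer back to $R$ using Theorem \ref{thm:can-invert-good-primes}.

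First, I would observe that multiplication by $k$ annihilates each homology module $H_i(V_\bullet,d)$: any cycle $v \in \ker d_i$ satisfies $kv = d_{i-1}(\delta_i v) \in \im d_{i-1}$. In particular, the homology is a $k$-torsion $R$-module, vanishing after base change to $R'$.

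Next I would upgrade this vanishing to split exactness of $(V_\bullet\otimes R', d_\bullet)$ as a complex of $R'[G]$-modules, via the endomorphism $p_i := d_{i-1} s_i$ of $V_i \otimes R'$. Using $d_{i-1}d_{i-2} = 0$ and the contraction identity on $V_{i-1}$ one checks $p_i^2 = p_i$; its image is $\im d_{i-1}|_{R'}$, its kernel is $\im s_{i+1}$ (using the contraction identity on $V_i$ and $s_i s_{i+1} = 0$), and $d_i$ restricts to an $R'[G]$-module isomorphism $\ker p_i \simto \im d_i|_{R'}$. This yields the identity
\[ [V_i \otimes R'] = [\im d_{i-1}|_{R'}] + [\im d_i|_{R'}] \qquad \text{in } \Rep_{R'}(G), \]
and the alternating sum $\sum_i (-1)^i [V_i \otimes R']$ telescopes to zero.

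To finish, I would reduce to the case of a Dedekind domain $R$ with algebraic number field quotient field, exactly as in the proof of Corollary \ref{cor:homs-factor-through}, so that Theorem \ref{thm:can-invert-good-primes} identifies the kernel of $\Rep_R(G) \to \Rep_{R'}(G)$ with the torsion ideal. The Euler class $e := \sum_i (-1)^i [V_i] \in \Rep_R(G)$ lies in this kernel, hence is torsion; this is the first assertion. The ring-homomorphism claim follows because $\bC$ is torsion-free, so any $\phi$ from a subring $R_0 \subseteq \Rep_R(G)$ containing every $[V_i]$ must send the torsion element $e \in R_0$ to $0$. The main technical obstacle is the idempotent analysis in the split-exactness step: one must verify not only $p_i^2 = p_i$ with the stated image and kernel, but also that $d_i$ realizes $\ker p_i$ as isomorphic to $\im d_i|_{R'}$, so that the relation $[V_i\otimes R'] = [\im d_{i-1}|_{R'}] + [\im d_i|_{R'}]$ holds as a genuine direct-sum relation in $\Rep_{R'}(G)$ rather than merely in a Grothendieck group of complexes; everything else is routine bookkeeping.
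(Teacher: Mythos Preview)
Your proposal is correct and takes essentially the same approach as the paper: both produce the splitting $V_i \otimes R' = d\delta(V_i \otimes R') \oplus \delta d(V_i \otimes R')$ over $R'$ (your idempotent $p_i = d_{i-1}s_i$ is precisely the projector onto the first summand, and your isomorphism $d_i\colon \ker p_i \simto \im d_i$ is the paper's ``$d$ is an isomorphism from $\delta d A_i$ to $d\delta A_{i+1}$''), then invoke Theorem~\ref{thm:can-invert-good-primes} to conclude the Euler class lies in the torsion ideal. The paper attributes the splitting step to Borcherds's Lemma~2.9 and phrases the final step via Corollary~\ref{cor:homs-factor-through}, but the content is the same.
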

\begin{proof}
Base change to $R'$ makes $k$ is invertible in $R$, and then the vanishing of homology and the alternating sum follow from the argument in \cite{B98} Lemma 2.9: If $k$ is invertible, then $A_i$ is the direct sum of $d\delta A_i$ and $\delta d A_i$ (specifically, any $x$ is uniquely written as $d\delta k^{-1}x + \delta d k^{-1}x$), and $d$ is an isomorphism from $\delta d A_i$ to $d \delta A_{i+1}$.  Thus the sequence splits as a direct sum of isomorphisms.  We conclude that the homology of $(V_i,d_i)$ lies in the kernel of the base change homomorphism, which is the torsion ideal by Theorem \ref{thm:can-invert-good-primes}.  Then, by Corollary \ref{cor:homs-factor-through}, $\phi$ factors through the corresponding subring of $\Rep_{R'}(G)$.\end{proof}

\begin{rem}
Any set of distinct homomorphisms $\Rep_R(G) \to \bC$ is linearly independent - this is shown in Lemma 6.5 of \cite{BP84} in the modular setting, and the proof works here without change.  This together with orthogonality of characters implies that when $|G|$ is invertible in $R$, all homomorphisms from $\Rep_R(G)$ to $\bC$ are given by the traces of elements.
\end{rem}

\subsection{Conformal vertex algebras}

\begin{defn}
Let $R$ be a commutative ring.  A \textbf{vertex algebra} over $R$ is an $R$-module $V$ equipped with a distinguished vector $\unit$ and an $R$-linear multiplication map $V \otimes_R V \to V((z))$, written $a \otimes b \mapsto Y(a,z)b = \sum_{n \in \bZ} a_n b z^{-n-1}$, satisfying the following conditions:
\begin{enumerate}
\item For all $a \in V$, $Y(a,z)\unit \in a + zV[[z]]$.
\item The Jacobi identity: For any $r,s,t \in \bZ$, and any $u,v,w \in V$, 
\[ \sum_{i \geq 0} \binom{r}{i} (u_{t+i} v)_{r+s-i} w = \sum_{i \geq 0} (-1)^i \binom{t}{i} (u_{r+t-i}(v_{s+i}w) - (-1)^t v_{s+t-i}(u_{r+i} w)) \] 
\end{enumerate}
\end{defn}

\begin{defn}
Let $V$ be a vertex algebra over a commutative ring $R$.  A \textbf{conformal vector} of ``half central charge'' $\hat{c} \in R$ is an element $\omega \in V$ satisfying the following properties:
\begin{enumerate}
\item $\omega_0 v = v_{-2}\unit$ for all $v \in V$.
\item $\omega_1 \omega = 2\omega$
\item $\omega_3 \omega = \hat{c} \unit$
\item $\omega_i \omega = 0$ for $i = 2$ or $i > 3$.
\item $L_0 = \omega_1$ acts semisimply with integer eigenvalues.
\end{enumerate}
We typically write $Y(\omega,z) = \sum_{n \in \bZ} L_n z^{-n-1}$, i.e., $L_n = \omega_{n+1}$.  A conformal vertex algebra is a \textbf{vertex operator algebra} if the $L_0$-eigenspaces are projective $R$-modules of finite rank.
\end{defn}

\begin{thm} (\cite{B86}, \cite{DG12}, \cite{M14})
Let $L$ be an even integral lattice.  There is a nontrivial central extension $\hat{L}$ of $L$ by $\langle \epsilon | \epsilon^2 = 1 \rangle$, unique up to isomorphism.  Furthermore, the rational Fock space $V_{L,\bQ} = \bQ\{\hat{L}\} \otimes_{\bQ} \Sym_{\bQ}(L \otimes t^{-1}\bQ[t^{-1}])$ admits a unique vertex algebra structure over $\bQ$, such that $Y(\iota(\gamma) \otimes 1,z) = E^-(-\gamma,z)E^+(-\gamma,z)\gamma z^\gamma$ for all $\gamma \in \hat{L}$.  Here, 
\begin{enumerate}
\item $\bQ\{\hat{L}\} = \bQ[\hat{L}]/(\epsilon + 1)$ denotes the twisted group ring,
\item $\iota(\gamma)$ is the image of $\gamma \in \hat{L}$ under the embedding into $\bQ\{\hat{L}\}$.  
\item $z^\gamma$ multiplies $\iota(\beta)$ by $z^{(\bar{\beta},\bar{\gamma})}$, where $\bar{\gamma}$ is the image of $\gamma$ in $L$.
\item $E^\pm(-\gamma,z) = \exp\left(\sum_{n \in \pm \bZ_{>0}} \frac{-\gamma(n)}{n}z^{-n} \right)$, where $\gamma(n)$ acts by multiplication by the element $\gamma(n) = \gamma \otimes t^n \in L \otimes t^{-1}\bQ[t^{-1}]$ when $n < 0$, takes $\iota(\beta)$ to $(\bar{\gamma},\bar{\beta})\iota(\beta)$ when $n=0$, and annihilates $\iota(\beta)$ and satisfies $[\gamma(n),\beta(m)] = n(\bar{\gamma},\bar{\beta})\delta_{n,-m}$ when $n > 0$.
\end{enumerate}
Moreover, $V_{L,\bQ}$ admits an integral form $V_L$, spanned by the coefficients of series of the form $E^-(\gamma,z)$ for $\gamma \in \hat{L}$, applied to vectors of the form $e^\gamma$ for $\gamma \in \hat{L}$, and if $L$ is unimodular, then $V_L$ admits a conformal vector.
\end{thm}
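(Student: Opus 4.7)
I would handle the four claims in sequence, following standard lattice-VOA constructions with attention to integrality.

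\emph{Central extension and its uniqueness.} Choose an ordered $\bZ$-basis $\alpha_1,\dots,\alpha_n$ of $L$ and define a $\bZ$-bilinear $c: L \times L \to \bZ/2\bZ$ by $c(\alpha_i,\alpha_j) = (\alpha_i,\alpha_j) \bmod 2$ for $i > j$, $c(\alpha_i,\alpha_i) = \tfrac{1}{2}(\alpha_i,\alpha_i) \bmod 2$ (permitted by evenness), and $c(\alpha_i,\alpha_j) = 0$ for $i < j$. Then $\epsilon := (-1)^c$ is a normalized $\{\pm 1\}$-valued $2$-cocycle (trivially, being bilinear), and its commutator pairing $\epsilon(\alpha,\beta)\epsilon(\beta,\alpha)^{-1} = (-1)^{(\alpha,\beta)}$ is the required one. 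Uniqueness up to isomorphism is immediate: any two such cocycles have the same commutator pairing, so their ratio is a symmetric $\{\pm 1\}$-valued $2$-cocycle on the free abelian group $L$, hence a coboundary.

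\emph{Rational vertex algebra structure.} Define $Y(\iota(\gamma) \otimes 1, z)$ on the group-like generators by the stated formula and verify locality by a direct calculation: moving $E^+(-\gamma,z)$ past $E^-(-\beta,w)$ yields a factor of $(1-w/z)^{(\gamma,\beta)}$, and interchanging $\iota(\gamma), \iota(\beta)$ in $\bQ\{\hat L\}$ produces the sign $(-1)^{(\gamma,\beta)}$ by the commutator pairing of $\epsilon$, so after multiplication by $(z-w)^N$ for $N \gg 0$ the two orderings agree. Extension to $V_{L,\bQ}$ is unique by the reconstruction theorem, since the group-likes $\iota(\gamma) \otimes 1$ together with $\unit$ strongly generate the Fock space.

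\emph{Integral form.} I would show the $\bZ$-span $V_L$ of the stated coefficients is closed under all operations $a_n b$. Via reconstruction this reduces to closure under multiplication by $Y(\iota(\gamma) \otimes 1, z)$, which in turn reduces to the identity $E^+(-\gamma,z) E^-(\delta,w) = (1 - w/z)^{(\gamma,\delta)} E^-(\delta,w) E^+(-\gamma,z)$ (whose binomial expansion has integer coefficients since $(\gamma,\delta) \in \bZ$) together with integrality of multiplication in $\bZ\{\hat L\}$. The underlying structural fact is that the $z^n$-coefficient $S_n(\gamma)$ of $E^-(\gamma,z)$ is the complete symmetric (Schur) polynomial in the Heisenberg modes $\gamma(-m)$, providing divided-power generators in integral form.

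\emph{Conformal vector under unimodularity, and main obstacle.} Fix a $\bZ$-basis $\{v_i\}$ with dual basis $\{v^i\}$; unimodularity gives $v^i \in L$, so $A = (a_{ij}) := G^{-1}$ has integer entries, and $a_{ii} = (v^i,v^i) \in 2\bZ$ by evenness of $L$. Set $\omega := \tfrac{1}{2}\sum_i v_i(-1)v^i(-1)\unit$. Expanding $v^i = \sum_j a_{ij}v_j$ and using $a_{ij}=a_{ji}$, commutativity of the $v_k(-1)$'s, and the identity $\tfrac{1}{2}v_i(-1)^2\unit = S_2(v_i)\unit - \tfrac{1}{2}v_i(-2)\unit$ (where $S_2(v_i)\unit$ is the $z^2$-coefficient of $E^-(v_i,z)\unit$ and hence in $V_L$), one obtains
\[ \omega = \sum_{i<j} a_{ij}\, v_i(-1)v_j(-1)\unit + \sum_i a_{ii}\, S_2(v_i)\unit - w(-2)\unit, \]
with $w := \tfrac{1}{2}\sum_i a_{ii}\, v_i \in L$ since each $a_{ii}/2 \in \bZ$; thus $\omega \in V_L$. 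The conformal axioms $\omega_1\omega = 2\omega$, $\omega_3\omega = (n/2)\unit$ for $n = \rank L$, $\omega_2\omega = 0$, $\omega_k\omega = 0$ for $k > 3$, and $L_0$-semisimplicity with integer eigenvalues (the eigenvalue on $S\cdot e^\beta$ is $\deg S + \tfrac{1}{2}(\beta,\beta) \in \bZ$ by evenness) follow from standard Heisenberg computations. I expect the main obstacle to be step (3): verifying closure of $V_L$ under every vertex operation requires careful tracking of divided-power denominators via Schur-polynomial identities (equivalently, the integrality of the Hopf-algebra structure on symmetric functions), which is the technical heart of \cite{B86} and \cite{DG12}; the conformal vector step illustrates in miniature the parity cancellation ($a_{ii} \in 2\bZ$) that pervades such arguments.
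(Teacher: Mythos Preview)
The paper does not prove this theorem; it is stated as background with citations to \cite{B86}, \cite{DG12}, \cite{M14}, and the only supplementary comment is the remark that by Proposition~5.8 of \cite{M14} the integral form has a conformal vector if and only if $L$ is unimodular. So there is no ``paper's own proof'' to compare against.

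That said, your sketch is sound and tracks the standard arguments in those references. The cocycle construction and uniqueness are the usual ones; the locality computation and appeal to reconstruction are correct (the $e^\gamma$ do strongly generate, since $Y(e^\gamma,z)e^{-\gamma}$ already produces all Schur polynomials in the $\gamma(-n)$'s). Your identification of step (3) as the real work is accurate: closure of $V_L$ under all products $a_n b$, not merely under the group-like vertex operators, is what requires the divided-power/Schur-polynomial bookkeeping carried out carefully in \cite{DG12} and \cite{M14}. Your conformal-vector argument is clean and correct: the key inputs are $A=G^{-1}\in M_n(\bZ)$ from unimodularity and $a_{ii}=(v^i,v^i)\in 2\bZ$ from evenness, and your rewriting of $\tfrac{1}{2}v_i(-1)^2\unit$ via $S_2(v_i)$ is exactly the kind of parity cancellation that makes the integral theory work. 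One small addendum you might note explicitly: the half central charge $\hat c = (\operatorname{rank} L)/2$ lies in $\bZ$ because an even unimodular lattice has rank divisible by $8$, so the conformal-vector axioms as stated in the paper (which require $\hat c\in R$) are satisfied over $\bZ$.
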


We note that by Proposition 5.8 in \cite{M14}, the integral form $V_L$ has a conformal vector if and only if $L$ is unimodular.

\begin{defn}
Let $(V, Y, \unit, \omega)$ be a conformal vertex algebra over a commutative ring $R$.  A symmetric $R$-bilinear form $(,)$ on $V$, with values in $R$, is \textbf{invariant} if
\[ (u,Y(u,z)w) = (Y(e^{L_1} (-z^2)^{L_0} u,z^{-1})v,w) \]
for all $u,v,w \in V$.
\end{defn}

\subsection{Integral forms for Virasoro and the Monster vertex algebra}

We briefly recall some previously known results.  Let $vir$ be the Lie algebra over $\bZ$ with basis $\{ L_i \}_{i \in \bZ} \cup \{ \frac{c}{2} \}$ and relations $[L_m,L_n] = (m-n)L_{m+n} + \binom{m+1}{3}\delta_{m+n,0} \frac{c}{2}$ and $[\frac{c}{2},L_n] = 0$.  We write $Witt_{>0}$ and $Witt_{<0}$ for the subalgebras spanned by $\{L_i\}_{i >0}$ and $\{L_i\}_{i < 0}$, respectively.

\begin{thm} (Theorem 5.7 of \cite{B99})
There exists a $\bZ$-Hopf subalgebra $U^+(vir)$ of $U(vir \otimes \bQ)$ whose Lie algebra of primitive elements is $vir$, and which admits a structural basis.  Furthermore, $U^+(vir)$ acts on the vertex algebra over $\bZ$ attached to any even unimodular lattice.
\end{thm}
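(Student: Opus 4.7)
The plan is to construct $U^+(vir)$ as an integral divided-power form of $U(vir \otimes \bQ)$, in the spirit of Kostant's $\bZ$-form for semisimple Lie algebras but adapted to the infinite-dimensional Virasoro setting. Concretely, I would define $U^+(vir)$ as the $\bZ$-subalgebra of $U(vir \otimes \bQ)$ generated by the central element $c/2$, the binomials $\binom{L_0}{k}$ for $k \geq 0$, and the divided powers $L_n^{(k)} := L_n^k/k!$ for $n \neq 0$ and $k \geq 0$. The Hopf structure is inherited from $U(vir \otimes \bQ)$, in which every element of $vir$ is primitive; this forces $\Delta(L_n^{(k)}) = \sum_{i+j=k} L_n^{(i)} \otimes L_n^{(j)}$ and $S(L_n^{(k)}) = (-1)^k L_n^{(k)}$, and a routine check then confirms that the comultiplication and antipode send $U^+(vir)$ into $U^+(vir) \otimes_{\bZ} U^+(vir)$ and $U^+(vir)$, respectively.

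For the primitives, I would use the $\ad L_0$-grading: a primitive element decomposes as a sum of weight components, each of which is itself primitive; inside each weight space, the divided-power expansion of $\Delta$ allows one to match coefficients and conclude that the space of primitive elements is spanned over $\bZ$ by the $L_n$ together with $c/2$, matching $vir$ exactly. For the structural (PBW-type) basis, I would fix a total order on the $L_n$ and show inductively that every element of $U^+(vir)$ is uniquely a $\bZ$-linear combination of ordered monomials $\prod_i L_{n_i}^{(k_i)}$ with $n_1 \leq n_2 \leq \cdots$ multiplied by a polynomial in $\binom{L_0}{k}$ and $c/2$. The inductive step amounts to the claim that commutators of divided powers, computed from $[L_m,L_n]=(m-n)L_{m+n}+\binom{m+1}{3}\delta_{m+n,0}\tfrac{c}{2}$, expand as $\bZ$-linear combinations of such ordered monomials; this reduces to the Leibniz-type identity $[L_m, L_n^{(j)}] = \sum_{s \geq 1} \tfrac{1}{s!} (\ad L_m)^s(L_n) \cdot L_n^{(j-s)}$ together with integrality of the binomial coefficients $\binom{m+1}{3}$.

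The main obstacle, and deepest part of the argument, is proving that every element of $U^+(vir)$ preserves the integral lattice vertex algebra $V_L$ when $L$ is even and unimodular. The operators $L_n = \omega_{n+1}$ preserve $V_L$ because $\omega \in V_L$ under unimodularity, but dividing by $k!$ is delicate. My strategy is to exploit the Sugawara expression $L_n = \tfrac{1}{2}\sum_{i}\sum_{k \in \bZ} :h_i(-k)h_i(n+k):$ and rewrite each $L_n^{(k)}$ as a $\bZ$-linear combination of ordered products of Heisenberg divided powers $h_i(m)^{(j)} := h_i(m)^j/j!$; the latter preserve $V_L$ by construction, since the integral form of the Fock space is generated over $\bZ$ by coefficients of $E^-(\gamma,z)$ applied to $\iota(\gamma)$, and these coefficients are precisely divided-power monomials in the Heisenberg modes. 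The combinatorial identity expressing $L_n^{(k)}$ as an integer-coefficient polynomial in the Heisenberg divided powers is exactly where evenness (giving $(\gamma,\gamma)\in 2\bZ$) and unimodularity (giving $L^\vee = L$ and so integrality of all pairings of the dual basis used to form $\omega$) combine to absorb what would otherwise be fractional coefficients. Granting this, the divided powers $L_n^{(k)}$ and $\binom{L_0}{k}$ preserve $V_L$, and multiplicativity extends the action to all of $U^+(vir)$, completing the proof.
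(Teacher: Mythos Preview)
The paper does not prove this theorem; it is cited from \cite{B99} with only a one-sentence remark afterward and no argument supplied. So there is no in-paper proof to compare your proposal against.

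That said, your sketch contains a concrete error. The identity you invoke,
\[
[L_m, L_n^{(j)}] = \sum_{s \geq 1} \tfrac{1}{s!} (\ad L_m)^s(L_n) \cdot L_n^{(j-s)},
\]
is false. Take $m=2$, $n=-1$, $j=3$: your right-hand side is $3L_1 L_{-1}^{(2)} + \tfrac{3}{2}L_3 L_{-1} - \tfrac{1}{2}L_5$, with half-integer coefficients, whereas a direct computation gives $[L_2, L_{-1}^{(3)}] = 3L_{-1}^{(2)}L_1 + 3L_{-1}L_0 + L_{-1}$. Since this identity is precisely what you rely on to show that straightening stays inside the $\bZ$-span of ordered divided-power monomials, the inductive PBW argument collapses, and with it the claim that your proposed generators span a $\bZ$-subalgebra at all. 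Establishing integrality of all straightening coefficients for the naive divided powers $L_n^{(k)}$ is essentially the entire content of the theorem and cannot be dispatched by a one-line Leibniz formula.

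There is also a terminological mismatch: as the paper's remark following the theorem indicates, ``structural basis'' in \cite{B99} is a formal-group-theoretic notion (equivalent, in finite dimensions, to being the Hopf algebra of differential operators on a smooth formal group), not merely a PBW-type $\bZ$-basis. Your argument, even if the straightening step were repaired, would at best produce the latter.
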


We will not use the structural basis property, but it is essentially a smoothness property: when the Lie algebra of primitive vectors is finite dimensional, the existence of a structural basis is equivalent to being isomorphic to the Hopf algebra of differential operators on a smooth formal group.

\begin{defn}
We say that a vertex operator algebra over a commutative ring $R$ is \textbf{strongly conformal} if it admits an action of $U^+(vir) \otimes R$ compatible with the Virasoro action arising from $\omega$.
\end{defn}

\begin{thm}
There exists a vertex operator algebra $V^\natural_\bZ$ over $\bZ$, satisfying the following properties:
\begin{enumerate}
\item $V^\natural_\bZ$ is strongly conformal.
\item $V^\natural_\bZ$ has an invariant symmetric bilinear form, such that $V^\natural$ is self-dual.
\item $V^\natural_\bZ$ admits a faithful action of $\bM$ that preserves the bilinear form and commutes with the $U^+(vir)$-action.
\item $V^\natural_\bZ \otimes \bC$ is isomorphic to the Monster vertex operator algebra $V^\natural$ constructed in \cite{FLM88}.
\end{enumerate}
\end{thm}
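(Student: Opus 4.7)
The plan is to assemble this theorem from existing constructions rather than build anything from scratch. The heart of the construction is in \cite{C17}, which produces $V^\natural_\bZ$ as a self-dual $\bZ$-form of the Moonshine module via an integral version of the Frenkel-Lepowsky-Meurman $\bZ/2$-orbifold of the Leech lattice vertex algebra $V_\Lambda$. Concretely, one takes the integral lattice vertex algebra $V_\Lambda$ (which exists with its integral form and conformal vector by the Borcherds-Dong-Griess-Mason theorem cited just above in the excerpt, since $\Lambda$ is even unimodular), lifts the $-1$ involution to a vertex algebra automorphism of $V_\Lambda$, and glues the $\bZ/2$-invariants to an integral form of the twisted module along a suitably chosen integral intertwining operator. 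That construction produces exactly the data of items (2), (3), (4): the invariant self-dual symmetric bilinear form (which is the gluing of the natural integral forms on the untwisted and twisted sectors), the faithful Monster action (assembled from $\Aut(V_\Lambda)^{\bZ/2}$ together with a ``triality'' symmetry in the twisted construction), and the identification $V^\natural_\bZ \otimes \bC \cong V^\natural$ of Frenkel-Lepowsky-Meurman.

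It remains to address item (1), strong conformality. First I would invoke Theorem 5.7 of \cite{B99}, which endows $V_\Lambda$ itself with a $U^+(vir)$-action compatible with its Virasoro structure. Because the lifted involution on $V_\Lambda$ fixes the conformal vector $\omega$ (it acts by $\pm 1$ on the weight grading in a way that preserves $\omega$), it commutes with $U^+(vir)$; hence $U^+(vir)$ descends to the fixed-point subalgebra $V_\Lambda^+$. For the twisted sector, one needs the analogous integral divided-power Virasoro action on the $\bZ$-form of the $(-1)$-twisted $V_\Lambda$-module. This is essentially the twisted version of the same construction in \cite{B99}: the divided powers $L_n^{(k)}/k!$ and their analogues for $n < 0$ are built as normal-ordered polynomials in Heisenberg modes, and exactly the same integrality arguments (ultimately reducing to integrality of certain Schur polynomials and binomial coefficients) apply verbatim after replacing the untwisted Heisenberg by its twisted counterpart with half-integer modes. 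Gluing these two $U^+(vir)$-actions across the orbifold is automatic once one checks that the intertwining operator used in \cite{C17} commutes with the divided-power Virasoro operators, and this commutation follows from the corresponding fact over $\bQ$ together with $\bZ$-flatness of the pieces. The Monster action commutes with the full $U^+(vir)$ because it acts by vertex algebra automorphisms fixing $\omega$, and $U^+(vir)$ is generated (as a Hopf algebra) by the $L_n$ and their divided powers, each of which is manifestly preserved by any $\omega$-fixing automorphism.

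The main technical point, rather than a deep obstacle, is the verification that the $U^+(vir)$-action in the twisted sector is genuinely integral and glues correctly with the one on the fixed-point subalgebra $V_\Lambda^+$; this is not explicitly written in \cite{B99}, which treats only untwisted lattice algebras, but the argument is structurally identical. A secondary bookkeeping point is that the invariant bilinear form from \cite{C17} is $U^+(vir)$-invariant (not merely Virasoro-invariant over $\bQ$), which reduces to checking that the divided powers of $L_n$ are formal adjoints of the divided powers of $L_{-n}$ with respect to this form — a check that holds on $V_\Lambda$ by \cite{B99} and is preserved by the orbifold gluing because the gluing is by construction an isometry of integral bilinear forms on each graded piece.
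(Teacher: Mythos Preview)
Your proposal contains a factual error about \cite{C17} that leads to a genuine gap in the argument for strong conformality.

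The construction in \cite{C17} does \emph{not} proceed via a single integral $\bZ/2$-orbifold of $V_\Lambda$ in the Frenkel--Lepowsky--Meurman style. Rather, it builds $\bZ[1/pq, e^{\pi i/pq}]$-forms of $V^\natural$ for pairs of distinct primes $p,q \in \{2,3,5,7\}$ via cyclic $p$B-orbifolds, and then obtains $V^\natural_\bZ$ by Zariski descent. The paper's proof of the $U^+(vir)$-action follows exactly this pattern: after inverting $p$, the cyclic orbifold correspondence identifies the eigenspaces of $V^\natural$ under a $p$B-pure elementary subgroup $H_p$ with eigenspaces for an elementary subgroup acting on the \emph{untwisted} $V_\Lambda$, where the $U^+(vir)$-action is already supplied by \cite{B99}. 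One then checks compatibility of these transported actions on the overlaps $\bZ[1/pqr, e^{\pi i/pqr}]$ and descends to $\bZ$. No twisted-sector integrality is ever needed, because every piece is matched with a piece of untwisted $V_\Lambda$ over some localization.

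Your route instead requires a $U^+(vir)$-action on an integral form of the $(-1)$-twisted $V_\Lambda$-module, directly over $\bZ$. You assert this follows by ``structurally identical'' arguments to those in \cite{B99}, but this is not established in the literature and is not obvious: the twisted Heisenberg has half-integer modes, the relevant Schur-type integrality statements change form, and your appeal to ``the corresponding fact over $\bQ$ together with $\bZ$-flatness'' does not do the work you want. Flatness lets you verify identities among maps already known to be defined over $\bZ$; it does not show that the divided-power operators themselves preserve the $\bZ$-lattice in the twisted sector. The paper's multi-prime descent is designed precisely to sidestep this unproved step.
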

\begin{proof}
All of the claims except the existence of the $U^+(vir)$ action are given in Theorem 3.22 in \cite{C17}.  The $U^+(vir)$ action is transported from the action on the Leech lattice vertex operator algebra $V_\Lambda$ by the cyclic orbifold construction in the following way: Let $P = \{2,3,5,7\}$.  Then for any pair of distinct $p,q \in P$, we constructed a $\bZ[1/pq,e^{\pi i/pq}]$-form of $V^\natural$, and in Theorem 3.9 of \textit{loc. cit.}  we showed that these forms have $\bM$-symmetry.  For each $p \in P$, there is a $p$B-pure elementary subgroup $H_p \subset \bM$ of order $p^2$, and we may decompose our $\bZ[1/pq,e^{\pi i/pq}]$-form into a direct sum of eigenspaces for the action of $H_p$.  By the cyclic orbifold correspondence, these eigenspaces are identified with eigenspaces for an action of an elementary subgroup on $V_\Lambda$, and thereby inherit the $U^+(vir) \otimes \bZ[1/pq,e^{\pi i/pq}]$-action.  For distinct $p,q,r \in P$, Proposition 3.15 of \textit{loc. cit.} gives an isomorphism between the $\bZ[1/pq,e^{\pi i/pq}]$-form and the $\bZ[1/pr,e^{\pi i/pr}]$-form, when both are base-changed to $\bZ[1/pqr,e^{\pi i/pqr}]$, and the actions of $U^+(vir) \otimes \bZ[1/pq,e^{\pi i/pq}]$ and $U^+(vir) \otimes \bZ[1/pr,e^{\pi i/pr}]$ are identified after base change.  Then, when $V^\natural_\bZ$ is constructed by descent, we obtain an action of $U^+(vir)$ by the same descent.
\end{proof}

\section{The integral no ghost theorem}

We apply an integral enhancement of the Goddard-Thorn \cite{GT72} no-ghost theorem from \cite{B99} to produce an integral form of the Monster Lie algebra.  



\begin{defn}
Let $R$ be a subring of $\bC$, and let $W$ be a free $R$-module with an action of the Virasoro algebra of central charge 26, equipped with an $R$-valued symmetric Virasoro-invariant bilinear form (where ``Virasoro-invariant'' means $L_i$ is adjoint to $L_{-i}$ for all integers $i$).  We define the ``old covariant quantization'' $OCQ_R(W)$ to be the $R$-module $P^1/N^1$ with its induced inner product, where $P^1 = \{ v \in W | L_0v = v, L_iv = 0, \forall i > 0 \}$ is the \textbf{weight 1 primary subspace}, and $N^1 = \{ v \in P^1 | (v,w) = 0, \forall w \in P^1 \}$ is the \textbf{nullspace} (or \textbf{radical}) of $P^1$. 
\end{defn}

The name ``old covariant quantization'' comes from string theory.  BRST cohomology is an equivalent functor, which is newer and considered ``more systematic'' \cite{P98}.  We use $OCQ$ because there is an existing treatment over rings other than $\bC$ in the literature \cite{B99}.

\begin{prop} \label{prop:basic-properties-of-OCW}
$OCQ_R$ describes a functor from the groupoid of Virasoro representations of central charge 26 with invariant inner product over $R$ to the groupoid of $R$-modules with nondegenerate inner product.  Here, morphisms in the source groupoid are inner-product preserving Virasoro-equivariant $R$-module isomorphisms, and morphisms in the target are inner-product preserving $R$-module isomorphisms.  If $W$ is a conformal vertex algebra and the inner-product is invariant in the sense of vertex algebras, then $OCQ_R(W)$ is a Lie algebra, and the induced inner product is invariant in the sense of Lie algebras.  Furthermore, vertex algebra automorphisms preserving the conformal element are taken to Lie algebra automorphisms.
\end{prop}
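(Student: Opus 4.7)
The proof splits cleanly into three parts: functoriality, Lie algebra structure, and automorphism compatibility. For functoriality, let $f\colon W \to W'$ be an inner-product-preserving Virasoro-equivariant isomorphism. Since $f$ commutes with each $L_i$, it carries the joint kernel of $L_0 - 1$ and the $L_i$ ($i > 0$) bijectively onto the corresponding subspace, so $f(P^1) = (P')^1$. Inner-product preservation carries the radical to the radical, so $f(N^1) = (N')^1$, and $f$ descends to an inner-product-preserving isomorphism on quotients. Nondegeneracy of the form on $P^1/N^1$ is automatic since one has quotiented by the radical.

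For the Lie algebra structure on $OCQ_R(W)$ when $W$ is a conformal vertex algebra with VA-invariant form, I would define $[u,v] := u_0 v$ for $u,v \in P^1$ and verify closure, descent to the quotient, skew-symmetry, Jacobi, and form invariance. Closure uses that $[L_m, u_0] = 0$ for $u$ primary of weight $1$ (in general $[L_m, u_n] = ((h-1)m - n) u_{m+n}$), so $L_m(u_0 v) = u_0 L_m v$ shows $u_0$ preserves $P^1$. For descent and form invariance the key input is the adjointness identity
\[ (u_0 v, w) + (v, u_0 w) = 0 \qquad (u,v,w \in P^1), \]
obtained by specialising the VA invariance formula to a weight-$1$ primary $u$: the operator $e^{L_1}(-z^2)^{L_0}$ (or the $e^{zL_1}(-z^{-2})^{L_0}$ variant that arises in the standard adjoint operator $Y^*$) acts on such a $u$ as a pure power of $z$ times the identity, so the formula collapses to a comparison of Laurent coefficients that yields $(u_0 v, w) = -(v, u_0 w)$. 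This immediately shows $u_0$ stabilises $N^1$; combined with vertex-algebra skew-symmetry $Y(u,z) v = e^{z L_{-1}} Y(v, -z) u$ and the observation that $(L_{-1} x, w) = (x, L_1 w) = 0$ for $w \in P^1$, it handles the case of $u \in N^1$ and simultaneously yields skew-symmetry on $P^1/N^1$. The Jacobi identity follows by specialising the Borcherds identity stated in the excerpt to $r = s = t = 0$, where it collapses to $(u_0 v)_0 w = u_0(v_0 w) - v_0(u_0 w)$. The same adjointness identity is precisely $\operatorname{ad}$-invariance of the induced form in the Lie algebra sense.

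For automorphism compatibility, any vertex algebra automorphism of $W$ fixing $\omega$ commutes with every $L_n$ (they are read off from $\omega$) and preserves the inner product by hypothesis, so it preserves both $P^1$ and $N^1$ and descends to an automorphism of $OCQ_R(W)$; the identity $\phi(u_0 v) = (\phi u)_0(\phi v)$, immediate from $\phi$ being a VA homomorphism, gives compatibility with the bracket. The main technical obstacle is checking the adjointness identity carefully over an arbitrary subring $R \subseteq \bC$, because the general VA invariance formula nominally features operators like $e^{zL_1}$ and $(-z^{\pm 2})^{L_0}$ which involve rational denominators; however, the specialisation to weight-$1$ primary vectors collapses these operators to scalar multiplication by a power of $z$, so the resulting identity is denominator-free and transports uniformly to any $R$. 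All other steps are $R$-linear translations of classical arguments.
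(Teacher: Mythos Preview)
Your proof is correct. The functoriality argument is essentially identical to the paper's. For the Lie-algebra and invariance claims, however, you take a more hands-on route: you verify closure, descent, skew-symmetry, Jacobi, and $\ad$-invariance directly over $R$ by specialising the vertex-algebra identities to weight-$1$ primary vectors and checking that no denominators survive. The paper instead defines the bracket by the same formula $[u+N^1,v+N^1]=u_0 v + N^1$ and then dispatches all remaining verifications in a single sentence by tensoring with $\bC$ and citing the well-known complex case (Lemma 3.2.2 of \cite{C12}); this works because $W$ is $R$-free, $P^1$ commutes with flat base change, and $N^1 = P^1 \cap N^1_\bC$, so every identity can be tested after embedding in $W\otimes_R\bC$. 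Your approach is more self-contained and makes explicit why the operators $e^{zL_1}$, $(-z^{\pm 2})^{L_0}$, and $e^{zL_{-1}}$ cause no trouble on weight-$1$ primaries; the paper's is much shorter but relies on an external reference. One minor slip: the commutator formula for a primary of weight $h$ is $[L_m,u_n]=((h-1)(m+1)-n)u_{m+n}$, not $((h-1)m-n)u_{m+n}$; since both collapse to $-n\,u_{m+n}$ when $h=1$, your conclusion $[L_m,u_0]=0$ is unaffected.
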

\begin{proof}
For the first claim, it suffices to show that any Virasoro-equivariant inner-product-preserving $R$-module isomorphism $V \to W$ induces an inner-product-preserving isomorphism of $R$-modules $P^1_V/N^1_V \to P^1_W/N^1_W$.  By Virasoro equivariance, this restricts to an Virasoro-equivariant inner-product-preserving $R$-module isomorphism $P^1_V \to P^1_W$, and because inner products are preserved, this map induces an $R$-module isomorphism $N^1_V \to N^1_W$ of nullspaces.  We then obtain an isomorphism of $R$-modules with nondegenerate inner product.

For the second claim, the Lie algebra structure is given by $[u+N^1_W,v+N^1_W] = (u_0 v) + N^1_W$.  The invariance claims can be checked after taking the tensor product with $\bC$, and they are well-known for $R = \bC$ (see e.g., Lemma 3.2.2 of \cite{C12}).
\end{proof}

We now consider the oscillator cancellation property.  Over $\bC$, if $V$ is a Virasoro representation of central charge 24 with finite dimensional $L_0$-eigenspaces and nondegenerate Virasoro-invariant inner product, and $\beta \in (I\!I_{1,1} \otimes \bC) \setminus \{0\}$, then $OCQ(V \otimes \pi^{1,1}_\beta)$ is isomorphic to the eigenspace $V_{1-\frac{(\beta,\beta)}{2}} = \{ v \in V | L_0v = v - \frac{(\beta,\beta)}{2}v \}$.  Here, $\pi^{1,1}_\beta$ is the irreducible module for the Heisenberg vertex algebra $\pi^{1,1}_0$ attached to the vector $\beta$.

\begin{defn}
Let $R$ be a subring of $\bC$, and let $V$ be a $U^+(vir)_R$-module equipped with a symmetric nondegenerate Virasoro-invariant bilinear form.  We fix vectors $\beta, \gamma \in I\!I_{1,1} \otimes R$ satisfying $(\beta,\gamma) \neq 0, (\gamma,\gamma) = 0$, let $\pi^{1,1}_{\beta,R}$ be the Heisenberg module attached to $\beta$ over $R$, and define $H = V \otimes_R \pi^{1,1}_{\beta,R}$.  We define operators $K_i = \gamma(-i) = (1 \otimes \gamma(-1))_i$ on $H$, and define the \textbf{transverse subspace} to be $T = \{ v \in H | K_i v = L_i v = 0, \forall i > 0\}$.
\end{defn}


\begin{thm} \label{thm:self-duality-for-transverse-space}
Let $R$ be a subring of $\bC$, and let $V$ be an $R$-free $U^+(vir)_R$-module equipped with an action of a group $G$ that commutes with the Virasoro action, and with a symmetric nondegenerate Virasoro-invariant $G$-invariant bilinear form such that $V$ is self-dual with respect to the form.  Then, with notation as given in the preceding definition, if $(\beta,\gamma) \in R^\times$, then $V \cong T$ as $R$-free $R[G]$-modules with inner product, and in particular $T$ is self-dual.
\end{thm}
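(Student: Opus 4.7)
The plan is to construct explicit mutually inverse $R[G]$-module maps $\psi: V \to T$ and $\pi: T \to V$ that preserve the bilinear forms, following the classical oscillator-cancellation argument in the complex case while ensuring that all denominators that appear are unit multiples of $(\beta,\gamma)$, which is invertible by hypothesis.

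First, I would make the hyperbolic lattice explicit: since $(\gamma,\gamma) = 0$ and $(\beta,\gamma) \in R^\times$, expanding $\beta$ and $\gamma$ in the standard basis of $I\!I_{1,1}$ forces $\gamma$ to be a unit multiple of one of the isotropic generators, and a complementary isotropic vector $\delta \in I\!I_{1,1}\otimes R$ with $(\gamma,\delta) = 1$ exists. Then $\pi^{1,1}_{\beta,R}$ admits an $R$-basis of ordered monomials in $\gamma(-n), \delta(-n)$ ($n \geq 1$) applied to the highest weight vector $|\beta\rangle$, which satisfies $\gamma(n)|\beta\rangle = \delta(n)|\beta\rangle = 0$ and $L_n|\beta\rangle = 0$ for $n > 0$, while $\gamma(0)$ acts as the scalar $(\beta,\gamma)$. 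The projection $\pi: H \to V$ onto the $|\beta\rangle$-coefficient in this basis is $R[G]$-linear because $G$ acts trivially on the Heisenberg factor; moreover, since $K_j = \gamma(j)$ acts on the Fock basis as $j$ times the partial derivative with respect to $\delta(-j)$, the subspace annihilated by all $K_j$, $j>0$, is exactly $V \otimes R[\gamma(-1),\gamma(-2),\ldots]|\beta\rangle$, inside which $T$ is cut out by the vanishing of all $L_j$, $j > 0$.

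Next, I would build $\psi: V \to T$ by oscillator cancellation. Start with $\psi^{(0)}(v) = v \otimes |\beta\rangle$, which already lies in the $K_j$-kernel. Using $[L_i,\gamma(-i)] = i\gamma(0)$ and the scalar action of $\gamma(0)$, the correction $-\frac{1}{i(\beta,\gamma)}\gamma(-i)(L_i v)\otimes|\beta\rangle$ cancels the $(L_iv)\otimes|\beta\rangle$ term in $L_i \psi^{(0)}(v)$. Applying $L_j$ to these corrections produces only new contributions of strictly greater total $\gamma$-mode weight, because terms of the form $\gamma(j-i)(\cdots)|\beta\rangle$ with $j > i$ are annihilated on $|\beta\rangle$, while the $L_0$-eigenvalue is preserved. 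Since the total $\gamma$-mode weight is bounded by the $L_0$-eigenvalue, the iteration terminates after finitely many steps at each graded piece, producing $\psi(v) \in T$. Every operator employed commutes with $G$, so $\psi$ is $R[G]$-linear, and $\pi \circ \psi = \id_V$ by construction.

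Finally, I would verify that $\pi|_T$ is injective by descending induction on maximal $\gamma$-mode weight: if $t \in T$ has $\pi(t) = 0$, extracting the top-weight coefficient via a suitable product of $L_{n_i}$'s and using that $[L_n,\gamma(-n)]$ acts as the unit $n(\beta,\gamma)$ forces that coefficient to vanish, contradicting maximality unless $t = 0$. For the bilinear form, normalize $(|\beta\rangle,|\beta\rangle) = 1$; then $(\gamma(-i)|\beta\rangle,\gamma(-j)|\beta\rangle) = 0$ because $(\gamma,\gamma) = 0$, and the cross terms vanish because $\gamma(-i)^* = \gamma(i)$ kills $|\beta\rangle$, so $(\psi(v),\psi(w)) = (v,w)$, and the self-duality of $T$ follows from that of $V$. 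The main obstacle is verifying the iterative cancellation terminates coherently over $R$ and that the kernel-triviality step holds \emph{integrally}; both rely essentially on $(\beta,\gamma) \in R^\times$, without which one would be forced to invert its prime divisors. A secondary subtlety is checking that the correction-correction inner products vanish, which uses the isotropy $(\gamma,\gamma) = 0$ in a crucial way rather than just nondegeneracy of the pairing on $I\!I_{1,1}$.
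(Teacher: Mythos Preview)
Your oscillator-cancellation scheme has a genuine integrality gap. The correction term you write down, $-\frac{1}{i(\beta,\gamma)}\gamma(-i)(L_iv)\otimes|\beta\rangle$, carries a denominator $i$, not merely $(\beta,\gamma)$: the commutator $[L_i,\gamma(-i)] = i\gamma(0)$ acts as the scalar $i(\beta,\gamma)$, so cancelling $(L_iv)\otimes|\beta\rangle$ with a single $\gamma(-i)$ forces division by $i$. For $R=\bZ$ and $i\geq 2$ this correction does not lie in $H$, and iterating compounds the problem---at level $n$ the naive procedure produces denominators of the shape $\prod_j j^{m_j}m_j!$ indexed by partitions of $n$. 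Your stated principle that ``all denominators that appear are unit multiples of $(\beta,\gamma)$'' is therefore false as written, and the map $\psi$ you build need not land in $T$ over $R$. (Your injectivity argument for $\pi|_T$ survives, since there one only needs $n(\beta,\gamma)$ to be a nonzerodivisor in $R\subset\bC$; the failure is in constructing $\psi$.)

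This is exactly why the hypothesis demands that $V$ be a $U^+(vir)_R$-module and not merely an $R$-linear Virasoro module, and why the paper, following \cite{B99}, does not work with the polynomial oscillator algebra $R[\gamma(-1),\gamma(-2),\ldots]$ acting on $e^\beta$. Instead one uses the subalgebra $Y^-$ generated by the divided-power-type elements $e^{-\gamma}_{-1}D^{(n)}e^\gamma$ coming from the lattice vertex algebra, together with $U = U^+(Witt_{>0})$ in place of the subalgebra generated by the bare $L_i$. The decisive input, taken from Lemma~6.3 of \cite{B99}, is the index computation: the free $U$-module $e^{\beta,*}U$ sits inside $(Y^- e^\beta)^*_n$ with index precisely $\prod_{|\lambda|=n}(\beta,\gamma)^{l(\lambda)}$, so the \emph{only} obstruction to the $\Hom_U$ description of $T$ is the invertibility of $(\beta,\gamma)$. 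Your argument invokes neither the $U^+(vir)$-structure on $V$ nor the correct integral form of the Heisenberg factor; over a $\bQ$-algebra your sketch is fine and recovers the classical complex proof, but the integral refinement is the whole content of the theorem.
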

\begin{proof}
The proof over $\bC$ works by considering the subspace $K_\bC = \{ x \in H_\bC | K_i x = 0, \forall i > 0 \}$ of $H_\bC$, and showing that the radical of the inner product on $K_\bC$ has both $V_\bC \otimes \bC e^\beta$ and $T_\bC$ as complements in $K_\bC$.  We then obtain an inner-product-preserving isomorphism between $V_\bC$ and $T_\bC$, i.e., an inner-product-preserving map $V_\bC \to K_\bC$ that takes values in elements $x$ satisfying $L_i x = 0$ for all $i > 0$.  To show that this isomorphism exists over $R$, it suffices to produce an $R$-linear map $V \to K = \{ x \in H | K_i x = 0, \forall i > 0 \}$ that takes each $v \in V$ to an element in $K$ of the form $v \otimes e^\beta + \cdots$, that is annihilated by all $L_i, i > 0$, where the unwritten summands lie in the radical.  

The argument of \cite{B99} section 6 adapts to our situation, but we need to change the language to remove the assumption that we are analyzing the fake Monster Lie algebra.  Within the Heisenberg vertex algebra $\pi^{1,1}_0$, we have a commutative $R$-algebra $Y^-$ generated by $\{ e^{-\gamma}_{-1}D^{(n)}e^\gamma\}_{n > 0}$, and a subalgebra $Y'$ generated by $K_i$, $i < 0$.  These two algebras become equal after passing to the field of fractions.  The action of $U^+(vir)$ on $H$ restricts to actions of the subalgebra $U = U^+(Witt_{>0})$ and the subalgebra $U' \subset U$ generated by $L_i$, $i > 0$.  As with $Y^-$ and $Y'$, we see that $U$ and $U'$ become equal after base change to the field of fractions.  We know from the lattice vertex algebra construction that the subspace $Y^- e^\beta \subset \pi^{1,1}_{\beta,R}$ is stable under the action of $U^+(vir)$, so $U$ and $U'$ also act on this subspace.

In the proof of Lemma 6.3 of \cite{B99}, we find that the graded dual of $Y^- e^\beta$ contains the free $U$-module generated by $e^{\beta,*}$ as a sub-$U$-module, and the subspace $(e^{\beta,*}U)_n$ lies in $(Y^-e^\beta)^*_n$ as an index $\prod_{|\lambda|=n} (\beta,\gamma)^{l(\lambda)}$ subgroup, where $l(\lambda)$ is the number of parts in the partition $\lambda$.  Thus, by our assumption that $(\beta,\gamma)$ is invertible in $R$, these subspaces are equal.


By freeness of $e^{\beta,*}U$, for any $v \in V$, there is a unique $U$-module map $e^{\beta,*}U \to V$ taking $e^{\beta,*}$ to $v$.  This induces an isomorphism $V \to \Hom_U(e^{\beta,*}U,V)$ of $R$-free $R[G]$-modules.  Dualizing the rank 1 free $U$-module, we have the isomorphism
\[ \Hom_U(e^{\beta,*}U, V) \simto \Hom_U(R, V \otimes_R Y^- e^\beta). \]
Thus, composing these two isomorphisms yields an inner-product-preserving isomorphism from $V$ to the space of $U$-invariant elements of $V \otimes_R Y^- e^\beta$.  That is, for each $v \in V$, there is a unique $U$-invariant vector in $V \otimes_R Y^- e^\beta$ of the form $v \otimes e^\beta + \cdots$, where the unwritten summands are elements of $V \otimes I e^\beta$, with $I$ denoting the augmentation ideal of $Y^-$.  By $U$-invariance, we know that this vector is annihilated by $L_i$ for all $i>0$.

It remains to show that the vectors of the form $v \otimes e^\beta + \cdots$ lie in $K$, rather than just $K_\bC$.  Since $\pi^{1,1}_\beta$ is $U^+(vir)$-stable, such vectors lie in $H$, so it suffices to show that they are annihilated by all $K_i, i>0$.  However, this can be checked after base change to $\bC$, where it is already known to hold.  Thus, we have an injective map $V \to T$ of $R$-free $R[G]$-modules that preserves the inner product.  This is an isomorphism, because the base change $V_\bC \to T_\bC$ is an isomorphism, and $V$ is self-dual.


The inverse map $T \to V$ is defined by taking the $e^\beta$ term of any vector $v \otimes e^\beta + \cdots$ in $T$ to get a vector in $V$.  Any additional terms lie in the radical of the form on $K$.
\end{proof}

To complete the proof of oscillator cancellation, we show that the primary space of weight 1 is the direct sum of the weight 1 transverse space with the null space.  We are thankful to Richard Borcherds for pointing out the importance of self-duality of $T$.

\begin{lem} \label{lem:split-projection-for-self-dual-modules}
Let $R$ be a subring of $\bC$, let $P$ be a finite rank free $R$-module with $R$-valued symmetric bilinear form, let $T$ be a submodule, and let $N$ be the radical of the form.  Suppose $T$ is self-dual with respect to the form, and $P \otimes_R \bC = T \otimes_R \bC \oplus N \otimes_R \bC$.  Then $P = T \oplus N$.
\end{lem}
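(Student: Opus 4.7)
The plan is to exploit self-duality of $T$ to construct an explicit $R$-linear retraction $\phi: P \twoheadrightarrow T$, and then identify its kernel with $N$ by descending from the given $\bC$-decomposition. This reduces the whole problem to a single ``extension of scalars'' compatibility check for the null space.

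First I would build $\phi$ as follows. For each $x \in P$, restriction of the bilinear form gives an $R$-linear functional $(x, \cdot)|_T : T \to R$. Since $T$ is self-dual, the map $T \to \Hom_R(T,R)$ induced by the form is an isomorphism, so there is a unique $\phi(x) \in T$ with $(\phi(x), y) = (x, y)$ for all $y \in T$. The assignment $x \mapsto \phi(x)$ is $R$-linear by the uniqueness, and if $x \in T$ then clearly $\phi(x) = x$. Hence $\phi$ is an $R$-linear retraction of the inclusion $T \hookrightarrow P$, which immediately yields the direct sum decomposition $P = T \oplus \ker(\phi)$ of $R$-modules. Furthermore $\ker(\phi) = T^\perp := \{x \in P : (x,y) = 0 \text{ for all } y \in T\}$, and we have the obvious containment $N \subseteq T^\perp$.

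The heart of the proof — and the step where the complex hypothesis enters — is proving the reverse inclusion $T^\perp \subseteq N$. Given $x \in T^\perp$, view $x \otimes 1 \in P \otimes_R \bC$. Using the given decomposition $P_\bC = T_\bC \oplus N_\bC$, write an arbitrary $z \in P$ as $z \otimes 1 = y' + n'$ with $y' \in T_\bC$ and $n' \in N_\bC$. By $\bC$-bilinearity, $(x \otimes 1, y') = 0$ follows from $(x, y) = 0$ for all $y \in T$, while $(x \otimes 1, n') = 0$ because $n' \in N_\bC$ pairs trivially with every element of $P_\bC$ (the self-duality of $T_\bC$ forces $N_\bC$ to coincide with the radical of $P_\bC$). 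Thus $(x, z) = 0$ in $\bC$, and since $(x,z)$ already lies in $R \subseteq \bC$, this forces $(x,z) = 0$ in $R$. Hence $x \in N$, establishing $T^\perp = N$ and therefore $P = T \oplus N$.

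The main obstacle is precisely this last step: over $R$ one does not a priori know that $T^\perp$ equals $N$, since $T^\perp$ could in principle be strictly larger (it is the ``orthogonal complement relative to $T$ only''). The hypothesis that $T$ is self-dual as an $R$-module (and not merely after base change) is essential for constructing $\phi$ integrally, while the complex decomposition hypothesis is exactly what is needed to force $T^\perp = N$; without the latter one could have $T \oplus N$ strictly smaller than $P$ even if both summands were correctly defined. No separate verification of $T \cap N = 0$ is needed, as it is subsumed in $\ker(\phi) = N$ together with $\phi|_T = \id$.
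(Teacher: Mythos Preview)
Your proof is correct and essentially the same as the paper's: both construct the orthogonal projection $P \to T$ (characterized by $(\phi(x),y)=(x,y)$ for all $y\in T$) and identify its kernel with $N$. The only organizational difference is that you build $\phi$ intrinsically over $R$ using self-duality of $T$ and then invoke the $\bC$-decomposition to pin down the kernel, whereas the paper starts with the complex projection $P_\bC \to T_\bC$ with kernel $N_\bC$ and uses self-duality (via the sandwich $T \subseteq \phi(P) \subseteq T^\vee = T$) to show it restricts to a map $P \to T$; the underlying map and the role of each hypothesis are identical.
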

\begin{proof}
We first note that there is a unique inner-product-preserving projection $P \otimes_R \bC \to T \otimes_R \bC$, namely the projection that has kernel $N \otimes_R \bC$.  Since $T \subseteq P$, the image of $P$ under this projection necessarily contains $T$, and since the bilinear form on $P$ is $R$-valued, the image is contained in the dual module $T^\vee = \{x \in T \otimes_R \bC | \forall y \in T, (x,y) \in R \}$.  By self-duality of $T$, restriction yields a surjective norm-preserving projection $P \to T$, and the kernel is $N$.
\end{proof}

\begin{thm} \label{thm:no-ghost}
Let $R$ be a subring of $\bC$, and let $V$ be a unitarizable $U^+(vir)_R$-module of half central charge 12 equipped with an action of a group $G$ that commutes with the Virasoro action, and with a Virasoro-invariant $G$-invariant bilinear form, such that $V$ is self-dual.  Let $\beta$ be an element of $I\!I_{1,1} \otimes R$ such that $(\beta, \gamma) \in R^\times$ for some $\gamma \in I\!I_{1,1} \otimes R$, let $\pi^{1,1}_\beta$ be the Heisenberg module attached to $\beta$, and let $H = V \otimes \pi^{1,1}_\beta$.  Define $P^1 = \{ v \in H | L_0 v = v, L_i v = 0, \forall i > 0 \}$, and $N^1$ the radical of the inner product on $P^1$.  Then, $V_{1-\frac{(\beta,\beta)}{2}} \cong P^1/N^1$ as $R[G]$-modules.
\end{thm}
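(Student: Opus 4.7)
The plan is to chain together Theorem~\ref{thm:self-duality-for-transverse-space} (which says the transverse space $T$ is self-dual and isomorphic to $V$) with Lemma~\ref{lem:split-projection-for-self-dual-modules} (which lifts a complex splitting to an integral one under a self-duality hypothesis), using as input the classical Goddard--Thorn theorem over $\bC$. The weight-$1$ primary subspace $P^1$ contains the weight-$1$ part $T^1 := T \cap P^1$ of the transverse space, and one expects $T^1$ to be exactly a complement to the null space $N^1$ inside $P^1$, just as in the complex case.

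First I would unwind the grading. The operator $L_0$ acts on $H = V \otimes \pi^{1,1}_\beta$ as the sum of $L_0$ on $V$ and the Heisenberg $L_0$ on $\pi^{1,1}_\beta$, and every vector in $T$ of the form $v \otimes e^\beta + (\text{higher})$ produced by the construction of Theorem~\ref{thm:self-duality-for-transverse-space} has $L_0$-eigenvalue $\mathrm{wt}(v) + (\beta,\beta)/2$. So the inner-product-preserving $R[G]$-isomorphism $V \simto T$ restricts in weight to $V_{1-(\beta,\beta)/2} \simto T^1$. In particular $T^1$ is a finite-rank free $R[G]$-module, and since the bilinear form on $V$ (hence on $T$) is graded, the self-duality of $T$ restricts to self-duality of $T^1$.

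Next I would invoke the classical complex no-ghost theorem (e.g.\ \cite{GT72}, or the form of the argument used in \cite{B92} and in the proof of Theorem~\ref{thm:self-duality-for-transverse-space}) for the Virasoro module $H \otimes_R \bC$: after base change to $\bC$ we have the familiar decomposition
\[ P^1 \otimes_R \bC = (T^1 \otimes_R \bC) \oplus (N^1 \otimes_R \bC), \]
the first summand being a complement to the null space in the weight-$1$ primary space. Now all the hypotheses of Lemma~\ref{lem:split-projection-for-self-dual-modules} are in place, with $P = P^1$, $T = T^1$, $N = N^1$: $P^1$ is $R$-free of finite rank (it is a summand of a graded piece of $H$, which is $R$-free by the freeness of $V$ and of $\pi^{1,1}_\beta$), $T^1 \subset P^1$ is self-dual, and the complex decomposition holds. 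The lemma yields $P^1 = T^1 \oplus N^1$ as $R[G]$-modules with form (the $G$-action commutes with everything in sight, so the projection is $G$-equivariant), and therefore
\[ P^1/N^1 \;\cong\; T^1 \;\cong\; V_{1-(\beta,\beta)/2} \]
as $R[G]$-modules, which is the desired statement.

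The genuinely delicate step, and the only place where integrality does real work, is the application of Lemma~\ref{lem:split-projection-for-self-dual-modules}: the lemma fails without self-duality of $T^1$, so the technical weight of the argument really sits in Theorem~\ref{thm:self-duality-for-transverse-space}. Once self-duality is in hand, the remainder is essentially bookkeeping: checking that the unitarizability and half-central-charge hypotheses match those under which the complex decomposition is known, and that the $U^+(vir)_R$- and $G$-actions respect the grading and the form so that the construction of the transverse lift in Theorem~\ref{thm:self-duality-for-transverse-space} descends to a map $V_{1-(\beta,\beta)/2} \to P^1$ with image $T^1$. I do not expect any additional obstacles beyond what has already been resolved in the proof of Theorem~\ref{thm:self-duality-for-transverse-space}.
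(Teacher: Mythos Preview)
Your overall strategy---combine Theorem~\ref{thm:self-duality-for-transverse-space} with Lemma~\ref{lem:split-projection-for-self-dual-modules} and the complex Goddard--Thorn decomposition---matches the paper's. But there is a real gap: the transverse space $T$ is defined relative to a vector $\gamma$ of norm \emph{zero} (see the definition immediately preceding Theorem~\ref{thm:self-duality-for-transverse-space}), and Theorem~\ref{thm:self-duality-for-transverse-space} requires $(\beta,\gamma) \in R^\times$ for \emph{that} isotropic $\gamma$. The hypothesis of the present theorem only supplies some $\gamma$ with $(\beta,\gamma)\in R^\times$, not necessarily isotropic, and an isotropic one with this property need not exist. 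For example, take $\beta = (2,3) \in I\!I_{1,1}$ and $R=\bZ$: the isotropic vectors are $(a,0)$ and $(0,b)$, which pair with $\beta$ to $3a$ and $2b$---never a unit---yet $(\beta,(-1,1)) = 1$. So you cannot invoke Theorem~\ref{thm:self-duality-for-transverse-space} directly over $R$, and your chain breaks at the first link.

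The paper repairs this with a case split and Zariski descent. If $\beta$ is itself isotropic, an isotropic $\gamma$ with $(\beta,\gamma)\in R^\times$ does exist and your argument goes through unchanged. If $\beta$ has nonzero norm, one writes $\gamma = \gamma_1 + \gamma_2$ as a sum of linearly independent isotropic vectors with $(\beta,\gamma_i)\neq 0$, passes to the localizations $R_i = R[(\beta,\gamma_i)^{-1}]$ (over each of which Theorem~\ref{thm:self-duality-for-transverse-space} now applies using $\gamma_i$), runs your argument over each $R_i$ to get $V_{1-(\beta,\beta)/2}\otimes_R R_i \cong (P^1/N^1)\otimes_R R_i$, and then glues. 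The gluing is legitimate because $(\beta,\gamma_1)+(\beta,\gamma_2) = (\beta,\gamma)\in R^\times$, so the two localizations cover $\Spec R$. Once this localization-and-descent step is inserted, the remainder of your outline is correct.
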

\begin{proof}
Let $\gamma\in I\!I_{1,1} \otimes R$ satisfy $(\beta, \gamma) \in R^\times$.

\noindent{\textbf{Case 1:}} If $\beta$ has nonzero norm, we may take $\gamma$ to be a sum of linearly independent norm zero vectors $\gamma_1, \gamma_2$, with $(\beta,\gamma_i) \neq 0$.  Let $R_1 = R[(\beta,\gamma_1)^{-1}]$, $R_2 = R[(\beta,\gamma_2)^{-1}]$, and $R_3 = R[(\beta,\gamma_1)^{-1},(\beta,\gamma_2)^{-1}]$.  Let $T_1$ be the transverse space over $R_1$ constructed using $\gamma_1$, and $T_2$ the transverse space over $R_2$ constructed using $\gamma_2$.  By Theorem \ref{thm:self-duality-for-transverse-space}, for $i=1,2$, each weight 1 subspace $T_i^1$ is self-dual, and by the last half of the Goddard-Thorn theorem over $\bC$, $P^1 \otimes_R \bC = (T_i^1 \otimes_{R_i} \bC) \oplus (N^1 \otimes_R \bC)$.  Thus, we may apply Lemma \ref{lem:split-projection-for-self-dual-modules} to conclude $P^1 \otimes_R R_i = T_i^1 \oplus (N^1 \otimes_R R_i)$.  Once again applying Theorem \ref{thm:self-duality-for-transverse-space}, we obtain the isomorphism $(V \otimes_R R_i e^\beta)^1 \cong (P^1/N^1) \otimes_R R_i$, and $(V \otimes_R R_i e^\beta)^1 = V_{1-\frac{(\beta,\beta)}{2}} \otimes_R R_i e^\beta$ by degree considerations.

We now apply Zariski descent.  We have $R_i[G]$-module isomorphisms $V_{1-\frac{(\beta,\beta)}{2}} \otimes_R R_i \cong (P^1/N^1) \otimes_R R_i$ for $i=1,2$, and they glue along $R_3$ to yield an isomorphism $V_{1-\frac{(\beta,\beta)}{2}} \cong P^1/N^1$ over $R$.

\noindent{\textbf{Case 2:}} If $\beta$ has norm zero, then there exists a norm zero $\gamma_1$ such that $(\beta,\gamma_1) = (\beta,\gamma) \in R^\times$.  In this case, descent is unnecessary, and the first half of the above argument yields $V_{1-\frac{(\beta,\beta)}{2}} \cong P^1/N^1$ as $R[G]$-modules.
\end{proof}

\begin{rem}
Theorem \ref{thm:no-ghost} is our refinement of the integral no-ghost theorem (Theorem 6.5 of \cite{B99}).  Borcherds's theorem is only stated for the case $V$ is the vertex algebra of a Niemeier lattice, and it only asserts that if $\beta$ is $n$ times a primitive vector, then $OCQ_{\bZ}(H)$ has determinant dividing a power of $n$.  However, his proof is substantially more general, and implicitly uses the isomorphism that we establish.
\end{rem}

\begin{rem}
In \cite{B99}, Borcherds remarks that the root spaces of some Lie algebras that come from $OCQ$ do not depend solely on the norm of the root.  As an example, he points out that for $\beta \in I\!I_{25,1}$ primitive of norm zero, the degree $\beta$ piece of the fake Monster Lie algebra is isomorphic to the Niemeier lattice $\beta^\perp/\bZ\beta$.  This does not contradict the isomorphism in our result for the following reason: to isolate the degree $\beta$ piece in the norm zero part of a $I\!I_{1,1}$-grading, we must take $V$ to be the lattice vertex algebra $V_{\beta^\perp/\bZ\beta}$ in the statement of our theorem.  For other primitive norm zero vectors, we need to input other lattice vertex algebras as $V$.
\end{rem}

\begin{defn} \label{defn:R-form-of-monster-Lie-algebra}
Let $R$ be a subring of $\bC$.  We define the $R$-form $\fm_R$ of the Monster Lie algebra to be $OCQ_R(V^\natural_R \otimes_R V_{I\!I_{1,1},R})$.
\end{defn}

\begin{cor} \label{cor:description-of_monster-Lie-algebra}
Let $R$ be a subring of $\bC$.  The $R$-form of the Monster Lie algebra $\fm_R$ is a $I\!I_{1,1}$-graded Lie algebra with homogeneous action of $\bM$, and the $\bM$-action on the graded pieces $\fm_{m,n,R}$ is given by the following $R[\bM]$-module isomorphisms
\[ \fm_{m,n,R} \cong \begin{cases} V^\natural_{1+mn,R} & \gcd(m,n) \in R^\times \\ I\!I_{1,1} \otimes R \text{ with trivial action} & (m,n) = (0,0) \\ 0 & mn = 0, (m,n) \neq (0,0) \\ \text{mysterious} & \text{otherwise}\end{cases} \]
Furthermore, for any subgroup $G$ of $\bM$ and any ring homomorphism $\phi: \Rep^\natural_R(G) \to \bC$, if $\gcd(m,n)$ is a unit in $R' = \bigcap_{P \supset |G|R} R_P$, then
\[ \phi(\fm_{m,n,R}) = \phi(V^\natural_{1+mn,R}). \]
Moreover, $\fm_R$ has an invariant bilinear form that restricts to a perfect pairing between the degree $(m,n)$ and $(-m,-n)$ submodules, and a contragradient involution $\omega$ that negates degrees in $I\!I_{1,1}$, such that the homogeneous pieces with lattice degree $(m,n)$ satisfying $\gcd(m,n) \in R^\times$ are self-dual under the corresponding contragradient form.
\end{cor}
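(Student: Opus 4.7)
The plan is to apply Theorem~\ref{thm:no-ghost} to each graded piece of $V^\natural_R \otimes V_{I\!I_{1,1},R}$ and to inherit the remaining structure from the ambient vertex algebra. The Lie algebra structure, invariant form, $I\!I_{1,1}$-grading, and homogeneous $\bM$-action on $\fm_R = OCQ_R(V^\natural_R \otimes V_{I\!I_{1,1},R})$ are produced directly by Proposition~\ref{prop:basic-properties-of-OCW}: the Monster acts on $V^\natural_R$ by inner-product-preserving automorphisms commuting with Virasoro and fixing $\omega$ (by the integral form theorem of the preceding section), and acts trivially on the lattice factor, so its action on $\fm_R$ is homogeneous with respect to the $I\!I_{1,1}$-grading inherited from $V_{I\!I_{1,1},R}$.

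For a generic root space, I would fix $\beta \in I\!I_{1,1} \otimes R$ representing lattice degree $(m,n)$ under the standard hyperbolic convention, so that $(\beta,\beta) = -2mn$. The hypothesis ``$(\beta,\gamma) \in R^\times$ for some $\gamma$'' in Theorem~\ref{thm:no-ghost} is precisely the condition that the ideal generated by $m$ and $n$ be all of $R$, i.e., that $\gcd(m,n) \in R^\times$. In that case the theorem, applied with $V = V^\natural_R$, yields the $R[\bM]$-module isomorphism $\fm_{m,n,R} \cong V^\natural_{1+mn,R}$. The statement about $\phi$ follows by base change: $\gcd(m,n)$ becomes a unit in $R'$, so the isomorphism holds over $R'$, and Corollary~\ref{cor:homs-factor-through} produces a factorization $\Rep^\natural_R(G) \to \Rep^\natural_{R'}(G) \to \bC$ giving $\phi(\fm_{m,n,R}) = \phi(V^\natural_{1+mn,R})$. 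The degenerate cases are handled directly: for $(m,n)=(0,0)$, using $V^\natural_{0,R} = R\unit$ and $V^\natural_{1,R}=0$, the only weight-one primary vectors in degree $(0,0)$ are $\unit \otimes \alpha(-1)\unit$ for $\alpha \in I\!I_{1,1}\otimes R$, and the induced form is the nondegenerate lattice form; for $\beta = me_1$ with $m \neq 0$ (and symmetrically $(0,n)$), the same enumeration combined with the $L_1$-primacy condition $m(\alpha,e_1)=0$ pins $P^1_{(m,0),R}$ down to the rank-one $R$-submodule spanned by $e_1(-1)e^{me_1}$, on which the induced form vanishes because $(e_1,e_1)=0$, so $N^1 = P^1$ and $\fm_{m,0,R}=0$.

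For the invariant form and contragradient involution, the form on $\fm_R$ from Proposition~\ref{prop:basic-properties-of-OCW} pairs degrees $(m,n)$ and $(-m,-n)$ because the lattice VOA form pairs $e^\alpha$ with $e^{-\alpha}$; in the generic regime, the no-ghost isomorphisms $\fm_{\pm(m,n),R} \cong V^\natural_{1+mn,R}$ are inner-product-preserving by the proof of Theorem~\ref{thm:no-ghost}, and self-duality of $V^\natural_R$ then supplies the perfect pairing. The contragradient involution $\omega$ on $V^\natural_R \otimes V_{I\!I_{1,1},R}$, acting by the self-dual involution on the first factor and by $e^\alpha \mapsto \pm e^{-\alpha}$ on the second, preserves the conformal vector and the invariant form, so Proposition~\ref{prop:basic-properties-of-OCW} produces a Lie automorphism of $\fm_R$ negating lattice degree; the associated contragradient form $(u,\omega v)$ then pairs $\fm_{m,n,R}$ with itself, and self-duality in the generic regime follows from self-duality of $V^\natural_R$. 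I expect the main obstacle to be the degenerate null-root case at the integral level: the invertibility hypothesis of the no-ghost theorem fails there, so $P^1$ and $N^1$ must be computed by hand, and one must verify that no integral torsion slips in beyond what is visible over $\bC$, as well as tracking the cocycle signs in the contragradient involution on the integral lattice VOA.
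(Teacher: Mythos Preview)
Your proposal is correct and follows essentially the same route as the paper: invoke Proposition~\ref{prop:basic-properties-of-OCW} for the Lie structure, form, grading, involution, and $\bM$-action; apply Theorem~\ref{thm:no-ghost} for the generic root spaces under the hypothesis $\gcd(m,n)\in R^\times$; compute the $mn=0$ cases by hand via $V^\natural_{1,R}=0$ and the $L_1$-primacy condition $L_1\gamma_{-1}e^\beta=(\gamma,\beta)e^\beta$; and deduce the $\phi$-equality from Corollary~\ref{cor:homs-factor-through}. Your closing worry about integral torsion in the null-root case is unnecessary, since your own explicit enumeration already shows $P^1$ is the free $R$-module $\{\gamma_{-1}e^\beta : (\gamma,\beta)=0\}$ with the lattice form, exactly as in the paper.
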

\begin{proof}
The Lie algebra structure, invariant form, and Monster action come from basic properties of the $OCQ$ functor.  The grading and involution come from the corresponding structures on $V_{I\!I_{1,1},R}$.  The identification of $\fm_{m,n,R}$ with homogeneous spaces of $V^\natural$ follows from Theorem \ref{thm:no-ghost}.  The cases where $mn = 0$ follow from the decomposition of the space of weight one vectors in $V^\natural_R \otimes \pi^{1,1}_{(m,n),R}$ as the direct sum of $V^\natural_{1,R} \otimes_R \pi^{1,1}_{(m,n),R,0} = 0$ and $V^\natural_{0,R} \otimes_R \pi^{1,1}_{(m,n),R,1} = R \otimes_R (I\!I_{1,1} \otimes R)$, and the fact that $L_1\gamma^{-1}e^\beta = (\gamma,\beta)e^\beta$.  This means the primary vectors are $\gamma_{-1}e^\beta$ for $(\gamma,\beta) = 0$, and the inner product vanishes identically on this space except when $\beta = 0$, in which case it is nondegenerate.  The equality of $\phi$-values follows from Corollary \ref{cor:homs-factor-through}.
\end{proof}

The reader might wonder if we can have stronger control over the parts of $\fm_\bZ$ in non-primitive lattice degree $\beta$.  As the next result shows, there seems to be a fundamental problem with cancelling $\pi^{1,1}_\beta$. 

\begin{prop}
Let $\beta \in I\!I_{1,1}$ be a non-primitive vector of negative norm.  Then, there exists a Virasoro representation $V$ of central charge 24 over $\bZ$, such that $P^1/N^1 \not\cong V_{1-\frac{(\beta,\beta)}{2}}$.
\end{prop}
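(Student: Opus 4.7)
The plan is to take $V = V_\Lambda$, the Leech lattice vertex algebra over $\bZ$, which is a strongly conformal $U^+(vir)_\bZ$-module of central charge $24$, and, since $\Lambda$ is unimodular, carries a self-dual invariant bilinear form making each weight space $(V_\Lambda)_n$ a unimodular $\bZ$-lattice. In this set-up, $V_\Lambda \otimes V_{I\!I_{1,1}} \cong V_{I\!I_{25,1}}$ and $OCQ_\bZ$ of this vertex algebra is an integral form of the fake Monster Lie algebra, whose degree-$\beta$ root space is exactly the $P^1/N^1$ of the proposition (for $H = V_\Lambda \otimes \pi^{1,1}_\beta$). The goal is to show that for non-primitive $\beta$ of negative norm, $P^1/N^1$ fails to be unimodular, while $(V_\Lambda)_{1-(\beta,\beta)/2}$ is unimodular, so the two cannot be isomorphic as $\bZ$-modules with bilinear form.

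Write $\beta = k\alpha$ with $\alpha \in I\!I_{1,1}$ primitive and $k \geq 2$. The key point is that $(\beta,\gamma) \in k\bZ$ for \emph{every} $\gamma \in I\!I_{1,1}$, so the hypothesis of Theorem \ref{thm:self-duality-for-transverse-space} fails as badly as possible. Following the proof of that theorem, the rank-one free $U^+(Witt_{>0})$-submodule $e^{\beta,*}U$ of $(Y^-e^\beta)^*$ is a proper sublattice of index $\prod_{|\lambda|=n}(\beta,\gamma)^{l(\lambda)}$ in each weight $n$ (Lemma 6.3 of \cite{B99}), a strictly positive power of $k$ whenever $n > 0$. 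Consequently the natural map $V_{1-(\beta,\beta)/2} \to T \hookrightarrow P^1/N^1$ constructed in the proof of Theorem \ref{thm:self-duality-for-transverse-space} does not extend to an integral isomorphism: its image is a finite-index sublattice of $P^1/N^1$ with cokernel supported at the primes dividing $k$. Combined with Borcherds's integral no-ghost theorem (Theorem 6.5 of \cite{B99}), which bounds the discriminant of $P^1/N^1$ above by a power of $k$, this integrality obstruction forces the discriminant to be a strictly positive power of $k$; since $(V_\Lambda)_{1-(\beta,\beta)/2}$ has discriminant $1$, the two lattices cannot be isomorphic, proving the proposition.

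The main obstacle will be converting Borcherds's one-sided upper bound into a genuine lower bound on the discriminant of $P^1/N^1$ --- that is, producing an explicit witness to the non-triviality of the cokernel. What is needed is a weight-$1$ primary class in $V_\Lambda \otimes \pi^{1,1}_\beta$ lying outside the image of $T$ in $P^1/N^1$, exhibiting genuine $p$-torsion in the cokernel for some prime $p \mid k$. For the minimal case $\beta = 2\alpha$ with $(\alpha,\alpha) = -2$ (target weight $5$), this reduces to a finite computation in the weight-$5$ primary subspace, using Heisenberg operators $\alpha(-n)$ applied to $e^\beta$ and the relation $(\beta,\alpha) = -4$ as the direct source of the integrality failure; for general $\beta = k\alpha$, one specializes to any prime $p \mid k$ and reduces to the analogous computation in the $p$-local situation.
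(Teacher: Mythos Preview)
Your proposal has a genuine gap, which you yourself identify in the final paragraph: you never actually prove that $P^1/N^1$ for $V = V_\Lambda$ has discriminant strictly greater than $1$. The second paragraph asserts that the integrality obstruction ``forces the discriminant to be a strictly positive power of $k$'', but this does not follow from what you have written. Knowing that the inner-product-preserving map $V_{1-(\beta,\beta)/2} \to P^1/N^1$ from the proof of Theorem~\ref{thm:self-duality-for-transverse-space} fails to be defined over $\bZ$ (or has proper image) does not by itself give a lower bound on the discriminant of the target; Borcherds's Theorem~6.5 in \cite{B99} only gives an upper bound. You then concede that producing an explicit witness ``reduces to a finite computation'' which you do not perform. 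So the argument is an outline with its key step missing.

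The paper avoids this difficulty entirely by choosing a much smaller $V$: the integral Verma module $V = U^+(Witt_{<0})v$ with lowest-weight vector $v$ of conformal weight $-(\beta,\beta)/2$. Then $V_{1-(\beta,\beta)/2} = \bZ L_{-1}v$ is rank one, with $(L_{-1}v,L_{-1}v) = -(\beta,\beta)a$ where $a = (v,v)$. A short direct computation of $P^1$ and $N^1$ in $V \otimes \pi^{1,1}_\beta$ shows that $P^1/N^1$ is also rank one, generated by $\gamma_{-1}e^\beta \otimes v$ for $\gamma$ a primitive generator of $\beta^\perp \subset I\!I_{1,1}$, with norm $(\gamma,\gamma)a$. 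Writing $\beta = k\alpha$ with $\alpha$ primitive, one has $(\gamma,\gamma) = -(\alpha,\alpha)$, so the two rank-one lattices have norms differing by the factor $k^2 > 1$, and are therefore not isomorphic. The whole argument fits in a paragraph and requires no appeal to \cite{B99} or to any discriminant bound. The moral is that to exhibit \emph{some} $V$ witnessing the failure, one should pick the simplest possible $V$, not the most interesting one.
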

\begin{proof}
Let $V = U^+(Witt_{<0})v$ for a nonzero vector $v$ of conformal weight $-(\beta,\beta)/2 > 0$.  This is an integral form of the Verma module $M(24,-\frac{(\beta,\beta)}{2})$.  Then the lowest weight spaces are $\bZ v$ of weight $-\frac{(\beta,\beta)}{2}$ and $\bZ L_{-1}v$ of weight $1-\frac{(\beta,\beta)}{2}$.  Because $e^\beta$ has weight $\frac{(\beta,\beta)}{2}$, the weight 1 subspace of $V \otimes \pi^{1,1}_\beta$ is then $(I\!I_{1,1} \otimes \bZ v) \oplus (\bZ e^\beta \otimes \bZ L_{-1}v)$, where we write $I\!I_{1,1}$ to mean $\{ \gamma_{-1}e^\beta | \gamma \in I\!I_{1,1} \}$, which lies in weight $1+\frac{(\beta,\beta)}{2}$.  $L_1$ acts on this space by $L_1(\gamma_{-1}e^\beta \otimes n v + r e^\beta \otimes sL_{-1}v) = (\gamma,\beta)e^\beta \otimes nv - r e^\beta \otimes s(\beta,\beta)v$, so $P^1$ consists of all $\gamma_{-1}e^\beta \otimes n v + r e^\beta \otimes sL_{-1}v$ satisfying $n(\gamma,\beta) = rs(\beta,\beta)$.  This is spanned by $\beta_{-1}e^\beta \otimes v + e^\beta \otimes L_{-1}v$ and $\gamma_{-1} e^\beta \otimes v$ for $(\gamma,\beta) = 0$.  $N^1$ is spanned by $L_{-1}(e^\beta \otimes v) = \beta_{-1} e^\beta \otimes v + e^\beta \otimes L_{-1}v$, so the quotient $P^1/N^1$ is represented by $\{\gamma_{-1} e^\beta \otimes v | (\gamma,\beta) = 0\}$ with its induced inner product.  Let us compare this space with $V_{1-\frac{(\beta,\beta)}{2}}$, i.e., the span of $L_{-1} v$.  Let $(v,v) = a$ for $a \neq 0$, so $(L_{-1}v,L_{-1}v) = (L_1 L_{-1}v,v) = -(\beta,\beta)(v,v) = -(\beta,\beta)a$.  On the other hand, $(\gamma_{-1} e^\beta,\gamma_{-1}e^\beta) = (\gamma,\gamma)$.  Thus, $\gamma_{-1} e^\beta \otimes v$ has norm $(\gamma,\gamma)a$.  The subgroup of $P^1/N^1$ generated by vectors of norm $-(\beta,\beta)a$ is therefore strictly smaller than $P^1/N^1$ when $\beta$ is not primitive.
\end{proof}


\section{Hodge theory on Lie algebra cohomology}

We introduce Borcherds-Kac-Moody Lie algebras over subrings $R$ of $\bC$, and extend a theorem of Kostant about the spectrum of the Laplacian to this setting.  We roughly follow the treatment in \cite{J96}.

\begin{defn}
Let $I$ be a countable set, and let $A = (a_{i,j})_{i,j \in I}$ be a real matrix.  We say that $A$ is a \textbf{Borcherds-Cartan matrix} if the following 3 conditions are satisfied:
\begin{enumerate}
\item $A$ is symmetrizable, i.e., there is a diagonal matrix $D$ whose diagonal entries are positive real, such that $DA$ is symmetric.
\item If $a_{i,i} > 0$, then $a_{i,i} = 2$.
\item For all $k \neq j$, $a_{j,k} \leq 0$.  For all $i \in I$ such that $a_{i,i} > 0$, $a_{i,j} \in \bZ_{\leq 0}$ for all $j \neq i$.
\end{enumerate}
Let $A$ be a Borcherds-Cartan matrix.  The Borcherds-Kac-Moody Lie algebra $\fg(A)$ is the Lie algebra over $\bC$ with generators $e_i, f_i, h_i, i \in I$, and the following defining relations:
\begin{enumerate}
\item For all $i,j,k \in I$, $[h_i,h_j] = 0$, $[h_i,e_k] = a_{i,k}e_k$, $[h_i,f_k] = -a_{i,k}f_k$, $[e_i, f_j] = \delta_{i,j}h_i$
\item For all $i \neq j$ with $a_{i,i} > 0$, $(\ad e_i)^{-2\frac{a_{i,j}}{a_{i,i}}+1}e_j = 0$, $(\ad f_i)^{-2\frac{a_{i,j}}{a_{i,i}}+1}f_j =0$.  For all $i,j \in I$ such tha $a_{i,j} = 0$, $[e_i,e_j]=0$ and $[f_i,f_j]=0$.
\end{enumerate}
A \textbf{Borcherds-Kac-Moody Lie algebra} (also called \textbf{BKM algebra}, or \textbf{generalized Kac-Moody Lie algebra}) over $\bC$ is a Lie algebra $\fg$ isomorphic to one of the form $\fg(A)/C \rtimes D$, where $C$ is a central ideal in $\fg(A)$, and $D$ is a commutative Lie algebra of derivations.  The \textbf{Cartan subalgebra} of $\fg(A)/C \rtimes D$ is $\fh = (\fh(A)/C) \rtimes D$, where $\fh(A)$ is the subalgebra of $\fg(A)$ generated by $\{h_i\}$.
\end{defn}

\begin{lem}
Let $\fg$ be a Borcherds-Kac-Moody Lie algebra over $\bC$.  Then $\fg$ admits an invariant bilinear form, and this is unique up to rescaling.  
\end{lem}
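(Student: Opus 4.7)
The plan is to follow the standard construction of the invariant form for symmetrizable Kac--Moody algebras, as in \cite{J96}, with modifications to accommodate the imaginary simple roots and the extra central and derivation ingredients in the presentation $\fg \cong \fg(A)/C \rtimes D$. I would first build the form on $\fg(A)$ itself, then descend through $C$ and extend across $D$, and finally address uniqueness.

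First I would define the form on the Cartan $\fh(A)$ by the forced formula $(h_i, h_j) := d_i a_{i,j}$; symmetry follows from symmetrizability, since $d_i a_{i,j} = d_j a_{j,i}$. Extension to the root spaces proceeds by setting $(e_i, f_i) := d_i$, declaring $\fg_\alpha \perp \fg_\beta$ whenever $\alpha + \beta \neq 0$, and using induction on height: any $x \in \fg_\alpha$ of height greater than one may be written as $\sum_k [e_{i_k}, x_k]$, so invariance forces $(x, y) := \sum_k (x_k, [f_{i_k}, y])$ for $y \in \fg_{-\alpha}$. Well-definedness amounts to checking compatibility with the defining relations of $\fg(A)$: the Serre relations at real simple roots reduce to the familiar Kac--Moody computation, while at imaginary simple roots the only constraints are the commutation relations $[e_i, e_j] = 0$ when $a_{i,j} = 0$, which are handled by a direct bracket calculation. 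The central ideal $C$ lies in the radical because $(c, [x,y]) = ([c, x], y) = 0$ for $c$ central, so the form descends to $\fg(A)/C$. Finally I would extend across the semidirect product with $D$ by the invariance-forced rule $(d, h_i) = d_i \alpha_i(d)$ for $d \in D$, and choose the pairing on $D \times D$ so as to make the total form nondegenerate on $\fh$.

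For uniqueness, suppose $(\cdot,\cdot)'$ is another invariant symmetric bilinear form. Invariance gives $(h, h_i)' = \alpha_i(h)\cdot (e_i, f_i)'$ for every $h \in \fh$, so the restriction to $\fh \times \fh(A)$ is determined by the scalars $c_i := (e_i, f_i)'$. Consistency on $\fh(A)$ then yields $c_i d_j a_{j,i} = c_j d_i a_{i,j}$, and, assuming $\fg$ is indecomposable (otherwise split into blocks and repeat), this forces $c_i / d_i$ to be a single overall scalar $\lambda$. The inductive formula above then propagates $\lambda$ uniquely through every root space, so $(\cdot,\cdot)' = \lambda(\cdot,\cdot)$.

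The main obstacle I expect is verifying well-definedness of the inductive extension to root spaces, i.e.\ independence of the formula $(x, y) = \sum_k (x_k, [f_{i_k}, y])$ from the chosen expression of $x$ as a sum of brackets with simple root generators. In the Kac--Moody setting this is handled cleanly via a free Lie algebra presentation \cite{J96}; in the BKM generalization the imaginary simple roots introduce additional mixed relations to check, but the structural argument is unchanged, and I would adapt the standard treatment directly.
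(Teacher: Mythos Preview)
The paper's own proof is a one-line citation: ``The existence and uniqueness is Theorem 1.6 of \cite{J96}.'' Your proposal is essentially a sketch of the argument behind that citation---the standard inductive construction of the invariant form on a symmetrizable (generalized) Kac--Moody algebra---so the approaches agree, with yours simply unpacking the reference.

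One small caution on uniqueness: invariance forces the values $(d,h_i)$ and $(d,\fn_\pm)$ for $d\in D$, but it does \emph{not} by itself determine the pairing on $D\times D$. So ``unique up to rescaling'' holds for $\fg(A)/C$ but needs an extra normalization (e.g.\ demanding $D$ isotropic, or fixing the restriction to $\fh$) once outer derivations are adjoined. The reference \cite{J96} addresses this, and in the paper's sole application---the Monster Lie algebra, whose Cartan is the 2-dimensional $I\!I_{1,1}$ spanned already by images of the $h_i$---one may take $D=0$, so the issue does not arise.
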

\begin{proof}
The existence and uniqueness is Theorem 1.6 of \cite{J96}.  
\end{proof}


\begin{defn}
Let $\fg$ be a BKM algebra over $\bC$.  A \textbf{root} is a nonzero eigenvalue for the action of $\fh$, i.e., a linear map $\alpha: \fh \to \bC$ such that there exists a nonzero $v \in \fg$ such that for all $h \in \fh$, $[h,v] = \alpha(h) v$.  The set of roots is written $\Delta$.  The \textbf{root space} attached to a root $\alpha$ is the eigenspace $\fg_\alpha = \{ v \in \fg | \forall x \in \fh, [x,v] = \alpha(x)v\}$.  A \textbf{simple root} is a root given by the eigenvalue $\alpha_i$ of $e_i$ for some $i$ (characterized by $\alpha_i(h_j) = a_{i,j}$).  A root $\alpha$ is \textbf{real} (written $\alpha \in \Delta^{re}$) if $(\alpha,\alpha) > 0$, and \textbf{imaginary} (written $\alpha \in \Delta^{im}$) otherwise.  The \textbf{root lattice} is the $\bZ$-span of simple roots in $\fh^*$.  A root $\alpha$ is \textbf{positive} (written $\alpha \in \Delta_+$) if it is a non-negative integer combination of $\alpha_i$, and \textbf{negative} if $-\alpha$ is positive. 
A \textbf{Weyl vector} for a BKM algebra over $\bC$ is some $\rho \in \fh^*$ such that $(\rho,\alpha) = -(\alpha,\alpha)/2$ for all simple roots $\alpha$.
\end{defn}

We briefly review the properties of the Monster Lie algebra $\fm$.

\begin{thm} (\cite{B92})
The Monster Lie algebra $\fm$ is a BKM algebra over $\bC$, with 2-dimensional Cartan subalgebra $\fh$, and roots graded by $\bZ \times \bZ$ with inner product $((a,b),(c,d)) = ad+bc$ (that is, the lattice $I\!I_{1,1}$).  The multiplicity of the $(m,n)$ root space is $c(mn)$, where $J(\tau) = \sum_{n \geq -1} c(n) q^n = q^{-1} + 196884q + 21493760q^2 + \cdots$.  The positive roots are those of degree $(m,n)$ with $m > 0$, and the simple roots are those of degree $(1,n)$ for some $n$.  The Weyl vector is $(-1,0)$.  When $(m,n) \neq (0,0)$, the degree $(m,n)$ part of $\fm$ is isomorphic to $V^\natural_{1+mn}$ as a representation of $\bM$.  In particular, the isomorphism type depends only on the product $mn$.
\end{thm}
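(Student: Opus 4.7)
The plan is to realize $\fm$ as the complex-coefficient version $OCQ_\bC(V^\natural \otimes V_{I\!I_{1,1}})$ of Definition \ref{defn:R-form-of-monster-Lie-algebra} and extract nearly every assertion from Theorem \ref{thm:no-ghost} together with Borcherds' characterization of generalized Kac--Moody algebras. By Proposition \ref{prop:basic-properties-of-OCW}, $\fm$ inherits a Lie bracket, an invariant symmetric bilinear form, and a faithful $\bM$-action from $V^\natural$, and the $I\!I_{1,1}$-grading on $V_{I\!I_{1,1}}$ transfers directly to $\fm$.

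The first substantive step is to identify the root spaces. For each nonzero $\beta = (m,n) \in I\!I_{1,1}$, pick any auxiliary $\gamma \in I\!I_{1,1}$ with $(\beta,\gamma) \neq 0$ and apply Theorem \ref{thm:no-ghost} over $\bC$ with $V = V^\natural$ and $G = \bM$; this yields the $\bM$-equivariant isomorphism $\fm_{m,n} \cong V^\natural_{1+mn}$. The multiplicity $c(mn)$ is then immediate because $J(\tau)$ is by definition the graded dimension of $V^\natural$, and the dependence of the isomorphism type on the product $mn$ alone is manifest. The $(0,0)$ piece I would compute by hand: the weight-one primary vectors of $V^\natural_0 \otimes \pi^{1,1}_0 = \bC \otimes \pi^{1,1}_0$ are precisely the vectors $\gamma(-1)e^0$ for $\gamma \in I\!I_{1,1}\otimes\bC$, and these descend past the null space to identify the Cartan $\fh$ with $I\!I_{1,1}\otimes\bC$ acting on $\fm_{m,n}$ through the lattice pairing.

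The main obstacle is then promoting $\fm$ from a graded Lie algebra with invariant form to a genuine BKM algebra. For this I would invoke Borcherds' characterization theorem for generalized Kac--Moody algebras, applied after collapsing the $I\!I_{1,1}$-grading to the single $\bZ$-grading by the first coordinate $m$. In this setup all graded pieces $\fm_m$ with $m \neq 0$ are finite-dimensional, the canonical involution $e^\beta \mapsto e^{-\beta}$ on $V_{I\!I_{1,1}}$ descends to a Cartan-type involution on $\fm$ that negates the grading, and the associated contravariant form is positive-definite on each nontrivial graded piece by unitarity of $V^\natural$. Borcherds' theorem then identifies $\fm$, modulo its center (which is trivial since the form is nondegenerate on $\fh$), as a BKM algebra with the expected Cartan.

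Finally I would pin down the distinguished data. Positive roots are those with $m > 0$, forced by the $\bZ$-grading chosen above. The simple roots fall out of the Euler--Poincar\'e principle applied to $H^*(\fn_+)$ --- of which Theorem \ref{thm:properties-of-monster-Lie-algebra} gives an integral refinement --- or equivalently from rewriting the denominator identity $p^{-1}\prod_{m>0,n}(1-p^mq^n)^{c(mn)} = J(p) - J(q)$ in BKM form; in either case one gets exactly the roots $(1,n)$ for $n \geq -1$ with multiplicity $c(n)$. The Weyl vector condition $(\rho, (1,n)) = -((1,n),(1,n))/2$ then forces $\rho = (-1,0)$. The trickiest bookkeeping throughout is reconciling the sign conventions on $I\!I_{1,1}$ so that the no-ghost identification $V^\natural_{1-(\beta,\beta)/2} \cong \fm_\beta$ is consistent with the inner product recorded on the root lattice.
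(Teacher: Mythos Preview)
The paper supplies no proof of this statement: it is recorded as a background theorem with the citation \cite{B92} and nothing more. Your proposal is therefore not to be compared against any argument in the paper, but is rather a reconstruction of Borcherds' original 1992 proof.

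As such a reconstruction, your outline is essentially correct and follows the standard route: realize $\fm$ as $OCQ_\bC(V^\natural \otimes V_{I\!I_{1,1}})$, identify the root spaces via the no-ghost theorem, verify the hypotheses of Borcherds' characterization theorem for generalized Kac--Moody algebras (the $\bZ$-grading by the first coordinate, the Cartan involution from $e^\beta \mapsto e^{-\beta}$, and positivity of the contravariant form coming from unitarity of $V^\natural$), and read off the simple roots from the product side of the Koike--Norton--Zagier identity $J(\sigma) - J(\tau) = p^{-1}\prod_{m>0,\,n\in\bZ}(1-p^m q^n)^{c(mn)}$.

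One caution on logical order: your parenthetical appeal to Theorem \ref{thm:properties-of-monster-Lie-algebra} for the simple roots is circular within this paper, since that result depends (through Theorem \ref{thm:monster-Lie-is-a-BKM-over-Z}) on already knowing that $\fm_\bZ \otimes \bC$ is a BKM algebra, which is exactly the statement under discussion. Your alternative via the denominator identity is the right way around. Similarly, invoking Theorem \ref{thm:no-ghost} is harmless over $\bC$ but overkill: Borcherds' 1992 argument uses only the classical complex no-ghost theorem, and the paper's integral refinement is logically downstream of, not a prerequisite for, the complex statement you are proving.
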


\begin{defn}
Let $R$ be a subring of $\bC$.  A BKM algebra over $R$ is a Lie algebra $\fg_R$ over $R$ such that the following properties hold:
\begin{enumerate}
\item $\fg_R \otimes _R \bC$ is a BKM algebra over $\bC$.
\item $\fg_R$ has a subalgebra $\fh_R$ such that $\fh_R \otimes_R \bC$ is a Cartan subalgebra of $\fg_R \otimes _R \bC$, and the roots of $\fg_R \otimes _R \bC$ (with respect to $\fh_R \otimes_R \bC$) restrict to $R$-linear functionals $\fh_R \to R$.
\item $\fg_R$ splits as a direct sum of $\fh_R$ and root spaces.
\end{enumerate}
\end{defn}

\begin{thm} \label{thm:monster-Lie-is-a-BKM-over-Z}
The Monster Lie algebra $\fm_\bZ$ is a BKM algebra over $\bZ$.  In particular, any $R$-form $\fm_R$ is a BKM algebra over $R$.
\end{thm}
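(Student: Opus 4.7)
The plan is to verify the three defining conditions for a BKM algebra over $\bZ$ directly from the structure of $\fm_\bZ$ provided by Corollary~\ref{cor:description-of_monster-Lie-algebra}, together with Borcherds's classical theorem on $\fm_\bC$, and then derive the general $R$-form statement by base change. I would set $\fh_\bZ := \fm_{0,0,\bZ}$, which the Corollary identifies with the lattice $I\!I_{1,1}$ carrying the trivial $\bM$-action, so $\fh_\bZ \otimes_\bZ \bC$ is exactly the Cartan subalgebra of $\fm_\bC$ as identified by Borcherds. Condition (1) is precisely Borcherds's theorem, and $\fh_\bZ$ is abelian because it is torsion-free and embeds into the abelian $\fh_\bZ \otimes_\bZ \bC$. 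The direct sum decomposition required by condition (3), namely $\fm_\bZ = \fh_\bZ \oplus \bigoplus_{(m,n)\neq (0,0)} \fm_{m,n,\bZ}$, is immediate from the $I\!I_{1,1}$-grading of $\fm_\bZ$ inherited from $V_{I\!I_{1,1},\bZ}$ through the OCQ functor, once I know each nonzero graded piece is genuinely a root space for $\fh_\bZ$.

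The content of the verification therefore lies in condition (2): each nonzero $\fm_{m,n,\bZ}$ must be an eigenspace for $\fh_\bZ$ under an eigenvalue $\alpha_{m,n}\colon \fh_\bZ \to \bZ$ that is $\bZ$-linear. I would compute at the level of OCQ representatives. A class $h \in I\!I_{1,1} = \fh_\bZ$ is represented by the weight-$1$ primary vector $\unit \otimes h(-1)\unit \in V^\natural_\bZ \otimes V_{I\!I_{1,1},\bZ}$, and the induced Lie bracket $[h,-]$ on primary vectors is implemented by the zero-th product $(\unit \otimes h(-1)\unit)_0 = 1 \otimes h(0)$. Since every OCQ representative in degree $(m,n)$ has the form $v \otimes e^{(m,n)} + \cdots$ with all correction terms of the same $I\!I_{1,1}$-degree, the Heisenberg zero-mode $h(0)$ acts on every summand by the scalar $(h,(m,n))$, the $I\!I_{1,1}$-pairing. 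Thus $\alpha_{m,n}(h) := (h,(m,n))$ is a well-defined $\bZ$-linear functional $\fh_\bZ \to \bZ$ because $I\!I_{1,1}$ is unimodular and integer-valued; after base change to $\bC$ the $\alpha_{m,n}$ are precisely the roots of the Monster Lie algebra (with multiplicities $c(mn)$) in Borcherds's description, so the eigenspace decomposition is complete and $\fh_\bZ$ is genuinely a maximal diagonalizing subalgebra.

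The main obstacle is this bracket identity backing condition (2); everything else is bookkeeping on the $I\!I_{1,1}$-grading. However, the OCQ bracket $[u,v] = u_0 v$ is defined integrally, the Heisenberg zero mode automatically produces integer eigenvalues on an $I\!I_{1,1}$-graded integral lattice vertex algebra, and the eigenvalue formula is already known over $\bC$, so the calculation descends from the classical case without further obstruction. Finally, for the last sentence of the theorem, the $I\!I_{1,1}$-grading of $\fm_\bZ$, the Cartan subalgebra $\fh_\bZ$, and the characters $\alpha_{m,n}$ are all preserved by base change $- \otimes_\bZ R$, so $\fm_R := \fm_\bZ \otimes_\bZ R$ inherits the three BKM axioms with Cartan subalgebra $\fh_\bZ \otimes_\bZ R$, completing the proof.
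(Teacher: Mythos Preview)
Your argument is correct and follows essentially the same route as the paper: identify $\fh_\bZ$ with the degree $(0,0)$ piece $I\!I_{1,1}$ via Corollary~\ref{cor:description-of_monster-Lie-algebra}, invoke Borcherds's theorem for condition~(1), read off the root-space decomposition from the $I\!I_{1,1}$-grading, and observe that the roots are $\bZ$-valued linear functionals on $\fh_\bZ$, then base-change for general $R$. You supply more detail than the paper does on condition~(2), explicitly computing the bracket $[h,-]$ as the Heisenberg zero-mode $h(0)$ acting by $(h,(m,n))$; the paper simply asserts ``the roots are $\bZ$-linear functionals to $\bZ$'' without writing this out. One small point: you write $\fm_R := \fm_\bZ \otimes_\bZ R$ at the end, but in the paper $\fm_R$ is \emph{defined} as $OCQ_R(V^\natural_R \otimes_R V_{I\!I_{1,1},R})$, so strictly speaking the identification $\fm_R = \fm_\bZ \otimes R$ requires a word (the paper also just asserts it, pointing to the Corollary); this is harmless since the OCQ inputs are base-changed from $\bZ$ and the primary/null subspaces are compatible with flat base change.
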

\begin{proof}
From Corollary \ref{cor:description-of_monster-Lie-algebra}, we obtain the decomposition into root spaces, and also see that $\fm_R = \fm_\bZ \otimes R$, so in particular, if $R = \bC$, we obtain $\fm$.  The Cartan subalgebra is $\fh_\bZ = I\!I_{1,1}$, and the roots are $\bZ$-linear functionals to $\bZ$.
\end{proof}

\subsection{The Chevalley-Eilenberg complex}

\begin{defn}
Given a Lie algebra $\fn$ over a subring $R$ of $\bC$, the \textbf{Chevalley-Eilenberg complex} for the trivial $\fn$-module $R$ is $(\bigwedge^*\fn, \delta)$, where
\[ \delta(x_1 \wedge \cdots \wedge x_n) = \sum_{i<j} (-1)^{i+j+1} [x_i,x_j]\wedge x_1 \wedge \cdots \wedge \hat{x}_i \wedge \cdots \wedge \hat{x}_j \wedge \cdots \wedge x_n. \]
The homology of $\fn$ with coefficients in $R$ is the homology of the Chevalley-Eilenberg complex.
\end{defn}

We note that in particular, $\delta(x \wedge y) = [x,y]$ for $x,y \in \fn$.

\begin{lem} \label{lem:Hodge-structure-on-BKM}
Let $\fg$ be a BKM algebra over a subring $R$ of $\bC$ with a positive definite contravariant hermitian form $\langle, \rangle$, and let $\fn$ be the positive subalgebra, i.e., the subalgebra generated by $\{e_i \}_{i \in I}$.  Then:
\begin{enumerate}
\item The operator $\delta$ satisfies
\[ \begin{aligned} \delta(a_1 \wedge \cdots \wedge a_r) &= \sum_{1\leq s<t\leq r} \epsilon(s,t) \delta(a_s \wedge a_t) \wedge a_1 \wedge \cdots \hat{a}_s \cdots \hat{a}_t \cdots \wedge a_r \\
&\qquad + (r-2) \sum_{1 \leq s \leq r} (-1)^{d_s(d_1+\cdots+d_{s-1})} \delta(a_s) \wedge a_1 \wedge \cdots \hat{a}_s \cdots \wedge a_r
\end{aligned} \]
for $a_i \in \bigwedge^{d_i}\fn$, and $\epsilon(s,t) = (-1)^{(d_s+d_t)(d_1+\cdots+d_{s-1}) + d_t(d_{s+1}+\cdots+d_{t-1})}$.  In particular, $\delta$ is a differential operator of order 2 and degree $-1$.
\item $\delta$ has an adjoint $d$ on $\bigwedge^* \fn$ with respect to the contravariant form.
\item $d$ and $\delta$ are disjoint square-zero operators that respect the weight grading.  Here, by ``disjoint'', we mean $d\delta x = 0$ implies $\delta x = 0$ and $\delta dx = 0$ implies $dx = 0$.
\item $d(a \wedge b) = da \wedge b + (-1)^{ij} a \wedge db$ for $a \in \bigwedge^i \fn$, $b \in \bigwedge^j \fn$.  In particular, $d$ is a differential operator of order 1 and degree 1.
\end{enumerate}
\end{lem}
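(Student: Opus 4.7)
The plan is to verify the four items, which together package a standard Hodge-theoretic structure on the Chevalley-Eilenberg complex.

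For (1), I would argue by induction on $r$, with the case $r = 2$ tautological. For general $r$, unfold $\delta(a_1 \wedge \cdots \wedge a_r)$ using the defining formula applied to the underlying $n$-vector with $n = \sum d_s$, and partition the index pairs $(k,l)$ by which factors $a_s$ they inhabit. Pairs $(k,l)$ straddling two distinct factors $a_s$ and $a_t$ are assembled, together with the within-$a_s$ and within-$a_t$ pairs, into $\delta(a_s \wedge a_t)$; the resulting sum over pairs $(s,t)$ overcounts each purely internal contribution $\delta(a_s)$ by a factor of $r-1$ instead of $1$, and the single-$\delta$ correction term absorbs this discrepancy. The signs are controlled by the Koszul rule for moving odd-degree wedge factors past one another, which produces $\epsilon(s,t)$ and the signs $(-1)^{d_s(d_1+\cdots+d_{s-1})}$; once the decomposition is written down, the fact that $\delta$ is a differential operator of order $2$ and degree $-1$ is immediate because the formula expresses $\delta$ of a product in terms of $\delta$ of single factors and of pairs.

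For (2), I would extend the form on $\fn$ to $\bigwedge^* \fn$ via the Gram determinantal construction $\langle x_1 \wedge \cdots \wedge x_n, y_1 \wedge \cdots \wedge y_n \rangle = \det(\langle x_i, y_j \rangle)$. Since $\fg$ is a BKM algebra, every weight space of $\bigwedge^*\fn$ is a finite rank $R$-module, on which the form is positive definite, hence nondegenerate. The adjoint of $\delta$ on each weight piece therefore exists, and these assemble to a globally defined $d$. For (3), $\delta^2 = 0$ is the standard Chevalley-Eilenberg identity deduced from the Jacobi identity, and $d^2 = 0$ follows by $\langle d^2 x, y \rangle = \langle x, \delta^2 y \rangle = 0$. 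Weight preservation of $\delta$ is immediate from weight-additivity of the bracket, and weight preservation of $d$ follows because the form pairs each weight space with itself, making distinct weight spaces orthogonal. Disjointness is a one-line positive-definiteness argument: $\|\delta x\|^2 = \langle \delta x, \delta x \rangle = \langle x, d \delta x \rangle$, so $d \delta x = 0$ forces $\delta x = 0$, and analogously $\delta d x = 0$ implies $d x = 0$.

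The substantive content is (4). I would prove the derivation property of $d$ by identifying it with the Chevalley-Eilenberg cohomology differential $d_{CE}$ on $\bigwedge^* \fn^*$. The contravariant Hermitian form induces an antilinear identification $\fn \cong \overline{\fn^*}$, which extends multiplicatively to an identification $\bigwedge^* \fn \cong \bigwedge^* \overline{\fn^*}$ of graded algebras. Under this identification the homology differential $\delta$ is conjugate-dual to the cohomology differential $d_{CE}$, so its adjoint $d$ corresponds to $d_{CE}$ itself (up to a conjugation that does not disturb the derivation property). The operator $d_{CE}$ is by construction the unique graded derivation extending the dual bracket $\mu^*: \fn^* \to \bigwedge^2 \fn^*$, so $d$ inherits the signed Leibniz rule.

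The main obstacle, as I anticipate it, is the combinatorial sign tracking in part (1), where Koszul signs from repeated swaps of odd-degree wedge factors must be reconciled with the formula defining $\epsilon(s,t)$. Part (4) is conceptually clean, but extracting the precise sign $(-1)^{ij}$ appearing in the stated Leibniz formula from the duality with $d_{CE}$ will require an explicit check on generators, since the identification via a Hermitian form mingles the form degree with the exterior degree in a way that must be made compatible with the paper's conventions.
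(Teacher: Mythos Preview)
Your proposal is correct and matches the paper's approach: for (3) the paper gives exactly your positive-definiteness argument (crediting Kostant \cite{K61}), and for (1), (2), and (4) the paper is terser, citing the lemma after Theorem~2.5.1 in Fuks \cite{F86} for (1) and (4) and calling (2) clear from the definition of the contravariant form. Your explicit arguments---the inductive partitioning of index pairs for (1) and the identification of $d$ with the Chevalley--Eilenberg cohomology differential on $\bigwedge^*\fn^*$ via the Hermitian form for (4)---are essentially the content of the cited reference, so you have spelled out what the paper delegates.
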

\begin{proof}
The first claim is the lemma after Theorem 2.5.1 in \cite{F86}, and follows from a direct calculation.

The second claim is clear from the definition of contravariant form.

For the third claim, the square-zero property is clear, and the preservation of the weight grading follows from the explicit formula for $\delta$.  Disjointness follows from the positive definite property of the contravariant inner product on $\bigwedge^* \fn$.  Specifically, if $d\delta x = 0$, then $(\delta x, \delta x) = (d\delta x, x) = 0$, implying $\delta x = 0$, and similarly for $dx$.  This was noted in Remark 2.3 in \cite{K61}.

The fourth claim is given in the same place as the first.
\end{proof}

\begin{lem} \label{lem:d-applied-to-bracket-with-ei}
Let $\fg$ be a BKM algebra over a subring $R$ of $\bC$ with Weyl vector $\rho$, and a nondegenerate contravariant hermitian form $\langle, \rangle$.  Let $\fn$ be the positive subalgebra.  Then, for any positive root $\gamma$, $d[e_i,x] = \gamma(h_i) e_i \wedge x +  e_i \cdot dx$ for all $x \in \fn_\gamma$.
\end{lem}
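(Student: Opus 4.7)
The plan is to read the identity as the commutator relation $[d,\mathrm{ad}(e_i)](x) = \gamma(h_i)\, e_i \wedge x$, where $\mathrm{ad}(e_i)$ is extended to a weight-zero derivation on $\bigwedge^* \fn$ and $\gamma$ is the weight of $x \in \fn_\gamma$. Both sides then live in $\bigwedge^2 \fn$ in weight $\gamma+\alpha_i$, so I would test the identity by pairing against an arbitrary $a \wedge b \in \bigwedge^2 \fn$ and invoking the adjointness $d = \delta^\dagger$ characterizing $d$.

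First I would compute the LHS pairing: $\langle d[e_i, x], a \wedge b\rangle = \langle [e_i, x], [a,b]\rangle$ using Lemma \ref{lem:Hodge-structure-on-BKM}(1) to identify $\delta(a \wedge b) = [a,b]$. Contravariance of the hermitian form moves $e_i$ across, producing $\pm \langle x, [f_i, [a,b]]\rangle$ (the precise sign depending on the Chevalley involution convention), and Jacobi expands $[f_i, [a,b]] = [[f_i, a], b] + [a, [f_i, b]]$. Next I would compute the pairing of $e_i \cdot dx = \mathrm{ad}(e_i)(dx)$ against $a \wedge b$: since $\mathrm{ad}(e_i)$ is a derivation on $\bigwedge^* \fn$, its adjoint is the derivation extension of the $\fn$-component of $\mathrm{ad}(f_i)$, and applying $d = \delta^\dagger$ converts the pairing into an expression of the same shape, but with $[f_i, a]$ and $[f_i, b]$ replaced by their projections to $\fn$. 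The discrepancy between these two pairings is therefore exactly the contribution of the $\fh$-components of $[f_i, a]$ and $[f_i, b]$.

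The crucial step is to identify this $\fh$-contribution with $\gamma(h_i) \langle e_i \wedge x, a \wedge b\rangle$. The $\fh$-component of $[f_i, a]$ is nonzero precisely when $a$ has a nontrivial $e_i$-component, in which case it is a scalar multiple of $h_i$. Since $[h_i, y] = \mu(h_i) y$ for $y \in \fn_\mu$ and the hermitian form pairs $x \in \fn_\gamma$ only with its own weight space, these scalar multiples of $h_i$ act on the remaining factor as $\gamma(h_i)$. Careful bookkeeping shows the two $\fh$-contributions (one from the $a$ side, one from the $b$ side) assemble into the antisymmetric combination $\gamma(h_i)\bigl(\langle e_i, a\rangle \langle x, b\rangle - \langle e_i, b\rangle \langle x, a\rangle\bigr)$, which is exactly $\gamma(h_i) \langle e_i \wedge x, a \wedge b\rangle$.

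The main obstacle is sign coherence: the contravariance relation, the sign in the Chevalley involution $\omega(e_i) = \pm f_i$, the signs appearing when one identifies the adjoint of $\mathrm{ad}(e_i)$ with the derivation extension of $\mathrm{ad}(f_i)$ projected to $\fn$, and the antisymmetrization in the wedge-contraction pairing must all align so that the two $\fh$-contributions reinforce rather than cancel. This calculation is essentially Kostant's original Laplacian computation in \cite{K61} for finite-dimensional semisimple Lie algebras, transported to the BKM setting along the lines of \cite{J96}, and the algebraic content is contained entirely in Jacobi applied to $f_i$, $a$, $b$ together with the defining relation $[e_i, f_i] = h_i$.
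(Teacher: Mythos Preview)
Your proposal is correct and follows essentially the same approach as the paper: pair both sides against an arbitrary $a \wedge b$, use the adjointness $d = \delta^\dagger$ together with the Jacobi identity, and identify the discrepancy between $\langle d[e_i,x], a\wedge b\rangle$ and $\langle e_i\cdot dx, a\wedge b\rangle$ as the contribution of the $\fh$-components of $[f_i,a]$ and $[f_i,b]$. The paper merely organizes this same computation as an explicit case split on whether $a$ or $b$ lies in $\bC e_i$ (via the decomposition $\fn = (\ad e_i)\fn + \ker(\ad f_i) + \bC e_i$), which is exactly your projection argument written out by hand.
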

\begin{proof}
We will use the contravariant inner product on $\bigwedge^* \fg$, restricted to $\bigwedge^* \fn$ to do our computations.  In particular, we compute on one hand $\langle d[e_i,x], a \wedge b \rangle$ and on the other hand $\langle e_i \wedge x , a \wedge b \rangle + \langle e_i \cdot dx, a \wedge b \rangle$.  Because the inner product on $\bigwedge^2 \fn$ is nondegenerate, equality after taking the inner product with arbitrary $a \wedge b$ implies equality of the original elements.  However, we need to be careful with contravariance, because applying the Chevalley generator $f_i$ may produce an element of $\fg$ not in $\fn$.  Most notably, if $x, y \in \fn$, then the equality $\langle x, [h_i,y] \rangle = \langle dx, h_i \wedge y \rangle$ holds in $\bigwedge^* \fg$ but $h_i \not\in \fn$, so the right side does not make sense as an inner product in $\bigwedge^* \fn$.

We split the computation into cases, using the fact that as a subspace of $\fg$, $\fn = (\ad e_i)\fn + \ker (\ad f_i) + \bC e_i$.  Thus, when taking the inner product with $a \wedge b$, we may assume that $a$ and $b$ are either taken by $\ad f_i$ to $\fn$, or are equal to $e_i$.  Our first computation is:
\[ \begin{aligned}
\langle d[e_i,x], a \wedge b \rangle &= \langle [e_i,x], [a,b] \rangle \\
&= -\langle x, [f_i,[a,b]] \rangle \\
&= \langle x, [a,[b,f_i]] \rangle + \langle x, [b,[f_i,a]] \rangle,
\end{aligned} \]
regardless of the form of $a$ and $b$.  We write $dx = \sum a_j \wedge b_j$.  Then, if $[a,f_i], [b,f_i] \in \fn$, we have
\[ \begin{aligned} \langle e_i \cdot dx, a \wedge b \rangle &= \sum_j \langle [e_i,a_j],a \rangle\langle b_j,b \rangle - \langle [e_i,a_j],b \rangle\langle b_j,a \rangle \\
&\qquad + \langle a_j, a \rangle \langle [e_i,b_j], b \rangle - \langle a_j,b \rangle\langle [e_i,b_j],a \rangle \\
&= \sum_j \langle a_j, [a,f_i] \rangle \langle b_j, b \rangle - \langle a_j, [b,f_i] \rangle \langle b_j, a \rangle \\
&\qquad + \langle a_j, a \rangle \langle b_j, [b,f_i] \rangle - \langle a_j,b \rangle\langle b_j, [a,f_i] \rangle \\
&= \langle dx, [a,f_i] \wedge b + a \wedge [b,f_i] \rangle \\
&= \langle x, [a,[b,f_i]] \rangle + \langle x, [b,[f_i,a]] \rangle.
\end{aligned} \]
Thus, in these cases, we have equality
\[ \langle d[e_i,x], a \wedge b \rangle = \langle e_i \cdot dx, a \wedge b \rangle. \]
Furthermore, in these cases, $\langle e_i \wedge x , a \wedge b \rangle = \langle e_i, a \rangle \langle x, b \rangle - \langle e_i,b \rangle\langle x,a \rangle = 0$, so our claim is satisfied in this case.

If $a = e_i$ and $[f_i,b] \in \fn$, with $b \in \fg_\beta$, then our 3 terms are
\[ \begin{aligned}
\langle d[e_i,x], a \wedge b \rangle &= \langle x, [a,[b,f_i]] \rangle + \langle x, [b,[f_i,a]] \rangle \\
&= \langle x, [a,[b,f_i]] \rangle + \langle x, [h_i,b] \rangle \\
&= \langle x, [a,[b,f_i]] \rangle + \beta(h_i) \langle x, b \rangle
\end{aligned} \]
and 
\[ \begin{aligned} \langle e_i \cdot dx, a \wedge b \rangle &= \sum_j \langle [e_i,a_j], e_i \rangle\langle b_j,b \rangle - \langle [e_i,a_j],b \rangle\langle b_j, e_i \rangle \\
&\qquad + \langle a_j, e_i \rangle \langle [e_i,b_j], b \rangle - \langle a_j,b \rangle\langle [e_i,b_j],e_i \rangle \\
&= \sum_j \langle a_j, e_i \rangle \langle b_j, [b,f_i] \rangle - \langle a_j,[b,f_i] \rangle\langle b_j, e_i \rangle \\
&= \langle dx, a \wedge [b,f_i]\rangle \\
&= \langle x, [a,[b,f_i]] \rangle
\end{aligned} \]
and
\[ \begin{aligned} \langle e_i \wedge x , a \wedge b \rangle &= \langle e_i, e_i \rangle \langle x, b \rangle - \langle e_i,b \rangle\langle x, e_i \rangle \\
&= \langle x, b \rangle 
\end{aligned} \]
so our claim is satisfied in this case, as well.  The case $[f_i,a] \in \fn$ and $b = e_i$ is essentially the same computation, so we omit it, and the case $a = b = e_i$ trivially yields $a \wedge b = 0$.  This completes the proof.
\end{proof}

\begin{lem} \label{lem:d-applied-to-any-bracket}
Let $x \in \fg_\alpha$ and $y \in \fg_\beta$.  Then, $d[x,y] = (\alpha,\beta) x\wedge y + x \cdot dy - y \cdot dx$.
\end{lem}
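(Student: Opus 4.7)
The plan is to prove the identity by induction on the height $\mathrm{ht}(\alpha)$ of the positive root $\alpha$, fixing $y \in \fg_\beta$. Since the Chevalley generators $\{e_i\}$ generate $\fn$ as a Lie algebra, any $x \in \fg_\alpha$ with $\mathrm{ht}(\alpha) \geq 2$ is a linear combination of brackets $[e_i, z]$ with $z \in \fg_{\alpha - \alpha_i}$ of strictly smaller height, and by bilinearity of both sides of the claimed formula in $x$, it suffices at each stage to treat a single such $x$.

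For the base case $\mathrm{ht}(\alpha) = 1$, the vector $x$ is a linear combination of Chevalley generators $e_i$ with $\alpha_i = \alpha$, so Lemma \ref{lem:d-applied-to-bracket-with-ei} already gives $d[e_i, y] = (\alpha_i, \beta)\, e_i \wedge y + e_i \cdot dy$. To match the claim, we need $y \cdot de_i = 0$, for which it suffices to show $de_i = 0$. Pairing against $a \wedge b$ for root vectors $a \in \fg_\gamma$, $b \in \fg_\delta$ in $\fn$, the orthogonality of the contravariant form on distinct root spaces forces $\langle de_i, a \wedge b\rangle = \langle e_i, [a,b]\rangle$ to vanish unless $\gamma + \delta = \alpha_i$, which is impossible by height comparison since $\gamma$ and $\delta$ are positive roots.

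For the inductive step, write $x = [e_i, z]$ with $z \in \fg_{\alpha - \alpha_i}$ and apply the Jacobi identity
\[
[[e_i, z], y] = [e_i, [z,y]] - [z, [e_i, y]].
\]
Expanding $d$ applied to each right-hand term by the base case and the inductive hypothesis, and using the derivation commutator $e_i \cdot (w \cdot \omega) = w \cdot (e_i \cdot \omega) + [e_i, w] \cdot \omega$ inside $\bigwedge^* \fn$, the left side groups into five kinds of terms: $e_i \wedge [z,y]$, $[e_i, z] \wedge y$, $z \wedge [e_i, y]$, $[e_i, z] \cdot dy$, and $y \cdot (e_i \cdot dz)$. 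Collecting coefficients and using the bilinearity $(\alpha, \beta) = (\alpha_i, \beta) + (\alpha - \alpha_i, \beta)$, the expression matches the claimed right-hand side $(\alpha, \beta)[e_i, z] \wedge y + [e_i, z] \cdot dy - y \cdot d[e_i, z]$ once we apply the base case formula $d[e_i, z] = (\alpha_i, \alpha - \alpha_i)\, e_i \wedge z + e_i \cdot dz$ on the right and use that $[y, e_i] \wedge z = -z \wedge [e_i, y]$.

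The main obstacle is the careful bookkeeping of the signs arising from the anticommutativity of the wedge product combined with the derivation structure of the $\fn$-action on $\bigwedge^* \fn$. Each of the five term types above yields a small scalar identity that reduces to the symmetry and bilinearity of the inner product on the root lattice; for instance, the coefficient of $z \wedge [e_i, y]$ on the left is $(\alpha - \alpha_i, \beta) - (\alpha - \alpha_i, \alpha_i + \beta) = -(\alpha - \alpha_i, \alpha_i)$, which matches the corresponding term on the right by symmetry of the form. I expect no conceptually new ingredient beyond this verification.
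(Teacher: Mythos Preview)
Your argument is correct and matches the paper's approach: induction on the height of $\alpha$, base case from Lemma~\ref{lem:d-applied-to-bracket-with-ei}, and inductive step via the Jacobi identity $[[e_i,z],y] = [e_i,[z,y]] - [z,[e_i,y]]$ followed by the same term-collection you describe. One minor wording issue: you cannot literally ``fix $y$'' through the induction, since the inductive step requires the hypothesis for $d[z,[e_i,y]]$ with the new second argument $[e_i,y]$; your actual use of the hypothesis is correct, so this is only a matter of phrasing.
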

\begin{proof}
We induct on the number of simple roots into which $\alpha$ decomposes.  If $\alpha$ is simple, then we may assume $x$ is some Chevalley generator $e_i$, and then this is resolved by Lemma \ref{lem:d-applied-to-bracket-with-ei}.  If $x = [e_i,x']$, then $[x,y] = [[e_i,x'],y] = [e_i,[x',y]] - [x',[e_i,y]]$, so we have reduced to a sum of brackets where the left entry decomposes into fewer simple roots.  Assuming the claim holds in those cases, we have
\[ \begin{aligned}
d[x,y] &= d[e_i,[x',y]] - d[x',[e_i,y]] \\
&= (\alpha_i,\alpha-\alpha_i+\beta)e_i \wedge[x',y] + e_i \cdot d[x',y] - (\alpha-\alpha_i, \beta + \alpha_i)x' \wedge [e_i,y] \\
&\qquad - x' \cdot d[e_i,y] + [e_i,y] \cdot dx' \\
&= (\alpha_i,\alpha-\alpha_i+\beta)e_i \wedge [x',y] + (\alpha-\alpha_i,\beta)(x \wedge y + x' \wedge [e_i,y]) + e_i \cdot x'\cdot dy \\
&\qquad - e_i \cdot y \cdot dx' - (\alpha-\alpha_i, \beta + \alpha_i)x' \wedge [e_i,y] + (\alpha_i,\beta)(x \wedge y - e_i \wedge [x',y]) \\
&\qquad - x' \cdot e_i \cdot dy + [e_i,y] \cdot dx' \\
&= (\alpha_i, \alpha - \alpha_i)e_i \wedge [x',y] + (\alpha,\beta)x\wedge y + (\alpha-\alpha_i,-\alpha_i)x' \wedge [e_i,y] \\
&\qquad + x \cdot dy - y \cdot e_i \cdot dx'
\end{aligned} \]
Using Lemma \ref{lem:d-applied-to-bracket-with-ei}, we see that:
\[\begin{aligned}
y \cdot dx &= y \cdot d[e_i,x'] \\
&= (\alpha_i, \alpha -\alpha_i)([y,e_i] \wedge x' + e_i \wedge [y,x']) + y \cdot e_i \cdot dx'
\end{aligned} \]
so combining this with the previous computation, we obtain the answer we want.
\end{proof}

\subsection{Properties of the Laplacian}

The following result is stated in \cite{B98} section 3, but we have been unable to find a proof in the generality we need here.  In the finite dimensional case, it is Theorem 5.7 in \cite{K61}.

\begin{thm} \label{thm:Laplacian}
Let $R$ be a subring of $\bC$ that is closed under complex conjugation, let $\fg$ be a BKM algebra over $R$ with Weyl vector $\rho$, and suppose $\fg$ is given a nondegenerate contravariant hermitian form.  Let $\fn$ be the positive subalgebra.  Let $(\bigwedge^*(\fn), \delta)$ be the Chevalley-Eilenberg complex for the trivial $\fn$-module $R$, and let $d$ be the adjoint of $\delta$ with respect to the contravariant hermitian form.  Then, the Laplacian $\Delta = d\delta + \delta d$ acts on the $\alpha$ weight space of $\bigwedge^* \fn$ via multiplication by $(\alpha, \alpha + 2\rho)/2$.
\end{thm}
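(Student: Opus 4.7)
The plan is to adapt Kostant's proof of Theorem 5.7 in \cite{K61} to the BKM setting, leveraging the computational machinery assembled in Lemmas \ref{lem:Hodge-structure-on-BKM}--\ref{lem:d-applied-to-any-bracket}. Since $d$ and $\delta$ both preserve the weight grading (Lemma \ref{lem:Hodge-structure-on-BKM}(3)), so does $\Delta$, and it suffices to show that $\Delta \xi = \frac{1}{2}(\alpha, \alpha + 2\rho)\xi$ on each weight-$\alpha$ vector $\xi$. Every weight space is finite-dimensional, so only finitely many summands ever contribute, and the convergence issues typical of the BKM setting do not arise.

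I would first verify the formula in exterior degree one, by induction on the height of $\beta$, for $x \in \fn_\beta \subset \bigwedge^1 \fn$. Here $\delta x = 0$, so $\Delta x = \delta d x$. For $\beta = \alpha_i$ a simple root, $\fn_{\alpha_i}$ is spanned by $e_i$, and $de_i = 0$ because no bracket $[y,z]$ with $y,z \in \fn$ equals $e_i$. The defining identity $(\rho, \alpha_i) = -\frac{1}{2}(\alpha_i, \alpha_i)$ makes $\frac{1}{2}(\alpha_i, \alpha_i + 2\rho) = 0$, matching $\Delta e_i = 0$. For $\beta$ non-simple, every $x \in \fn_\beta$ is a sum of brackets $[e_i, y]$ with $y \in \fn_{\beta - \alpha_i}$, and Lemma \ref{lem:d-applied-to-any-bracket} gives
\[ dx = (\alpha_i, \beta - \alpha_i)\, e_i \wedge y + e_i \cdot dy. \]
Applying $\delta$ via the formula of Lemma \ref{lem:Hodge-structure-on-BKM}(1), together with the inductive hypothesis $\delta d y = \frac{1}{2}(\beta - \alpha_i, \beta - \alpha_i + 2\rho)\, y$, yields the desired eigenvalue after a short algebraic simplification.

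Next I would extend to arbitrary exterior degree. For $\xi = x_1 \wedge \cdots \wedge x_n$ with $x_i \in \fn_{\beta_i}$ and $\alpha = \sum \beta_i$, the derivation property of $d$ (Lemma \ref{lem:Hodge-structure-on-BKM}(4)) and the explicit formula of Lemma \ref{lem:Hodge-structure-on-BKM}(1) for $\delta$ expand $\Delta \xi$ into terms involving the various $dx_i$ and $[x_s, x_t]$. Using Lemma \ref{lem:d-applied-to-any-bracket} to rewrite each $d[x_s, x_t]$ as $(\beta_s, \beta_t)\, x_s \wedge x_t + x_s \cdot dx_t - x_t \cdot dx_s$, the mixed pieces $x_s \cdot dx_t$ cancel systematically between $d\delta \xi$ and $\delta d \xi$. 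What survives splits into a \emph{diagonal} contribution $\sum_i \frac{1}{2}(\beta_i, \beta_i + 2\rho)\,\xi$, obtained by applying the degree-one result to each individual $dx_i$, together with a \emph{cross} contribution $\sum_{i<j}(\beta_i, \beta_j)\,\xi$; these combine to $\frac{1}{2}(\alpha, \alpha + 2\rho)\,\xi$.

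The hardest part will be the bookkeeping in the cancellation step. The second-order nature of $\delta$—visible in the ``$(r-2)$'' correction term in Lemma \ref{lem:Hodge-structure-on-BKM}(1)—means that applying $\delta$ to the degree-$2$ pieces $dx_i$ appearing in $d\xi$ produces several kinds of contributions that must be matched term-by-term against those arising when $d$ is applied to each bracket $[x_s, x_t]$ in $\delta\xi$, with all signs and orderings tracked. This is the graded analogue of Kostant's original computation, and the only BKM-specific complication is the need to handle imaginary simple roots alongside real ones; fortunately Lemmas \ref{lem:d-applied-to-bracket-with-ei} and \ref{lem:d-applied-to-any-bracket} were proved uniformly in both cases, so no separate argument is required.
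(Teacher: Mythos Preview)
Your approach is correct and your degree-one argument is exactly the paper's. The difference is in how you handle higher exterior degrees. You propose to carry out the full Kostant-style bookkeeping for $x_1 \wedge \cdots \wedge x_n$, tracking all the cross terms and cancellations. The paper instead exploits the observation in Lemma \ref{lem:Hodge-structure-on-BKM} that $\delta$ is a differential operator of order $2$ and $d$ of order $1$, so their graded commutator $\Delta$ is a differential operator of order at most $2$ on the graded-commutative algebra $\bigwedge^*\fn$. Since the target scalar $\alpha \mapsto \tfrac{1}{2}(\alpha,\alpha+2\rho)$ is a quadratic polynomial on the root lattice (with value $0$ at $\alpha=0$), the identity $\Delta\xi = \tfrac{1}{2}(\alpha,\alpha+2\rho)\xi$ is automatically compatible with the higher-degree Leibniz-type rule for a second-order operator, and it suffices to verify it on $\fn$ and on $\bigwedge^2\fn$. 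The paper then does only the $n=2$ computation explicitly, using Lemma \ref{lem:d-applied-to-any-bracket}, and stops there.

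Your route is the honest direct computation and certainly works; its advantage is that it makes the cancellation mechanism fully visible and does not rely on the reader knowing how second-order differential operators on graded-commutative algebras are determined by their low-degree behaviour. The paper's route trades that transparency for brevity: once you accept the order-$2$ statement from Lemma \ref{lem:Hodge-structure-on-BKM}(1), the ``hardest part'' you flag simply disappears, and the entire proof reduces to the degree-one induction plus a single $\bigwedge^2$ calculation.
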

\begin{proof}
By Lemma \ref{lem:Hodge-structure-on-BKM}, the operator $\Delta$, as the graded commutator of $d$ and $\delta$, is a differential operator of order 2 on $\bigwedge^*(\fn)$.  The function taking $\alpha$ to $(\alpha, \alpha + 2\rho)/2$ is a quadratic polynomial on the root lattice, so it suffices to show the assertion of the theorem holds for elements of $\fn$ and $\bigwedge^2 \fn$.

The case of $\fn$ can be done by induction on the number of simple roots into which $\alpha$ decomposes.  If $\alpha$ is simple, then $\fg_\alpha$ lies in the kernel of both $\delta$ and $d$, so $\Delta$ acts by zero, which is equal to $(\alpha, \alpha + 2\rho)/2$.  Now, suppose the claim holds for $\alpha$, and consider the action on $[e_i,x]$ for some $x \in \fg_\alpha$.  By Lemma \ref{lem:d-applied-to-bracket-with-ei}, $d[e_i,x] = \alpha(h_i) e_i \wedge x +  e_i \cdot dx$, so $\Delta [e_i,x] = \alpha(h_i)[e_i,x] + \delta(e_i \cdot dx)$.  Writing $dx = \sum a_j \wedge b_j$, we have
\[ \begin{aligned}
\delta(e_i \cdot dx) &= \sum_j \delta ([e_i,a_j] \wedge b_j + a_j \wedge [e_i,b_j]) \\
&= \sum_j [[e_i,a_j],b_j] + [a_j,[e_i,b_j]] \\
&= \sum_j [e_i, [a_j,b_j]] \\
&= (\ad e_i)\delta(dx) \\
&= \frac{(\alpha, \alpha + 2\rho)}{2} [e_i,x]
\end{aligned} \]
Since $\frac{(\alpha + \alpha_i, \alpha + \alpha_i + 2\rho)}{2} - \frac{(\alpha, \alpha + 2\rho)}{2} = (\alpha,\alpha_i) = \alpha(h_i)$, we conclude that
\[ \Delta [e_i,x] = \frac{(\alpha + \alpha_i, \alpha + \alpha_i + 2\rho)}{2}[e_i,x]. \]
Thus, $\Delta$ acts as expected on the $\alpha + \alpha_i$-weight space.

For $\bigwedge^2 \fn$, we let $x \in \fn_\alpha$ and $y \in \fn_\beta$.  Then, $\Delta(x \wedge y) = \delta d(x\wedge y) + d[x,y] = \delta(dx \wedge y - x \wedge dy) + d[x,y]$.  If $dx = \sum_j a_j \wedge b_j$ and $dy = \sum_k c_k \wedge d_k$, we get
\[ \begin{aligned}
\delta(\sum_j &a_j \wedge b_j \wedge y - \sum_k x \wedge c_k \wedge d_k) \\
&= \sum_j [a_j,b_j] \wedge y - [a_j,y] \wedge b_j + [b_j,y]\wedge a_j \\
&\qquad -(\sum_k [x,c_k]\wedge d_k - [x,d_k]\wedge c_k + [c_k,d_k]\wedge x) \\
&= (\delta dx) \wedge y-(\delta dy)\wedge x + y \cdot dx - x \cdot dy.
\end{aligned} \]
Combining this with Lemma \ref{lem:d-applied-to-any-bracket} yields the answer.
\end{proof}

\begin{cor} \label{cor:Laplacian-on-monster-Lie-algebra} Let $R$ be a subring of $\bC$, and let $\fn$ be the positive subalgebra of the $R$-form of the Monster Lie algebra $\fm_R$ from Definition \ref{defn:R-form-of-monster-Lie-algebra}.  Then, the Laplacian $\Delta = d\delta + \delta d$ acts as $(m-1)n$ on the degree $(m,n)$ part of $\bigwedge^* \fn$.
\end{cor}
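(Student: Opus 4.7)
The plan is to invoke Theorem~\ref{thm:Laplacian} and then carry out the one-line computation with the explicit data of the Monster Lie algebra.

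By Theorem~\ref{thm:monster-Lie-is-a-BKM-over-Z}, $\fm_R$ is a BKM algebra over $R$ with root lattice $I\!I_{1,1}$, whose bilinear form is $((a,b),(c,d)) = ad+bc$. The earlier recap of Borcherds's description identifies the Weyl vector of the Monster Lie algebra as $\rho = (-1,0)$; this is consistent with the simple roots being the $(1,n)$, since $(\rho,(1,n)) = -n = -((1,n),(1,n))/2$. The nondegenerate contravariant Hermitian form required by Theorem~\ref{thm:Laplacian} is obtained by combining the invariant bilinear form on $\fm_R$ supplied by Proposition~\ref{prop:basic-properties-of-OCW} and Corollary~\ref{cor:description-of_monster-Lie-algebra} with the contragradient involution $\omega$ that negates the $I\!I_{1,1}$-grading; after enlarging $R$ to a subring of $\bC$ closed under complex conjugation if necessary, this form satisfies the hypotheses of the theorem.

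Theorem~\ref{thm:Laplacian} then says that on the weight $\alpha = (m,n)$ subspace of $\bigwedge^* \fn$, the Laplacian acts by the scalar
\[ \frac{(\alpha,\alpha+2\rho)}{2} = \frac{((m,n),(m-2,n))}{2} = \frac{mn + n(m-2)}{2} = (m-1)n. \]
Because this eigenvalue is an integer and $\Delta = d\delta+\delta d$ preserves each weight space as an $R$-linear endomorphism of a finitely generated free $R$-module, the scalar action determined after base change to an appropriate Hermitian enlargement descends back to $R$ without modification, so the formula holds over the original $R$.

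The only genuine subtlety is the last bookkeeping step about base change: Theorem~\ref{thm:Laplacian} is stated for rings closed under complex conjugation, and one must check that the Laplacian is still well-defined and has the stated scalar action on $\fm_R$ for arbitrary subrings $R \subseteq \bC$. This is handled by the descent argument above, since $\delta$ is defined over $R$ and the equality $\Delta v = (m-1)n \cdot v$ for $v$ in the degree $(m,n)$ weight space, verified after base change to $\bC$, is a statement about integer coefficients on a free $R$-module and thus already holds over $R$.
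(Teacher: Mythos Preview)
Your proof is correct and follows essentially the same approach as the paper: invoke Theorem~\ref{thm:Laplacian} with the explicit Weyl vector $\rho=(-1,0)$ and compute $(\alpha,\alpha+2\rho)/2=(m-1)n$. The only minor difference is in handling the base ring hypothesis of Theorem~\ref{thm:Laplacian}: the paper reduces to $R=\bZ$ (noting $\fm_R=\fm_\bZ\otimes R$ and that $\bZ$ is trivially closed under complex conjugation), whereas you enlarge $R$ upward and then descend---both work, but reducing to $\bZ$ is slightly cleaner since $\delta$, $d$, and hence $\Delta$ are already defined over $\bZ$ and base change immediately gives the result.
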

\begin{proof}
It suffices to prove this for $R= \bZ$, since the eigenvalues are preserved by base change.  By Corollary \ref{cor:description-of_monster-Lie-algebra}, the contravariant form on $\fm_\bZ$ is positive definite, and the simple roots have the form $(1,n)$, with norm $-2n$, so $(-1,0)$ is a Weyl vector.  Thus, we may apply Theorem \ref{thm:Laplacian}, which asserts that $\Delta$ acts as $\frac{(\alpha + 2\rho,\alpha)}{2} = \frac{((m-2,n),(m,n))}{2} = (m-1)n$ on the degree $(m,n)$ space in $\bigwedge^* \fn$.
\end{proof}

Summing up, we have the following facts about the Monster Lie algebra:

\begin{thm} \label{thm:properties-of-monster-Lie-algebra}
There exists an integral form $\fm_\bZ$ of the Monster Lie algebra, satisfying the following properties:
\begin{enumerate}
\item $\fm_{\bZ}$ is a $\bZ \times \bZ$-graded Borcherds-Kac-Moody Lie algebra over $\bZ$, with an invariant bilinear form that identifies the degree $(0,0)$ subspace with the even unimodular lattice $I\!I_{1,1}$.
\item $\fm_{\bZ}$ has a faithful Monster action by homogeneous inner-product-preserving Lie algebra automorphisms, such that the $\bZ[\bM]$-module structure of the degree $(m,n)$ root space depends only on the value of $mn$, when $\gcd(m,n) = 1$, i.e., if $(m,n)$ is primitive.  In particular, this space is isomorphic to $V^\natural_{1+mn,\bZ}$.
\item The positive subalgebra $\fn_\bZ = \bigoplus_{m \geq 1, n \geq -1} \fm_{m,n,\bZ}$ has $\bZ_{\geq 0} \times \bZ_{\geq 0} \times \bZ$-graded homology $H^*(\fn_\bZ,\bZ) = \bigoplus_{i \geq 0, m \geq 0, n \in \bZ} H^i(\fn_\bZ,\bZ)_{m,n}$.  Furthermore, for any subgroup $G$ of $\bM$, and any $i \geq 0$, and all degrees $(m,n)$ such that $(m-1)n$ is coprime to $|G|$, the representation $H^i(\fn_\bZ,\bZ)_{m,n}$ of $G$ lies in the torsion ideal of the representation ring $\Rep_\bZ(G)$.  In particular, applying any ring homomorphism $\phi: \Rep^\natural_\bZ(G) \to \bC$ to $H^i(\fn_\bZ,\bZ)_{m,n}$ yields zero.
\end{enumerate}
\end{thm}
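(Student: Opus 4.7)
The plan is to derive parts (1) and (2) directly from previously assembled facts, and to attack part (3) using Hodge-theoretic cancellation on the Chevalley-Eilenberg complex.

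Parts (1) and (2) should follow immediately from combining Theorem \ref{thm:monster-Lie-is-a-BKM-over-Z} with Corollary \ref{cor:description-of_monster-Lie-algebra}. The former puts a BKM structure on $\fm_\bZ$ over $\bZ$ with its $\bZ \times \bZ$-grading; the latter identifies the $(0,0)$-piece with $I\!I_{1,1}$ carrying its standard invariant pairing, and supplies the faithful $\bM$-action by homogeneous inner-product-preserving Lie algebra automorphisms. In primitive bidegrees, $\gcd(m,n)=1$ is a unit in $\bZ$, so that same corollary yields the desired $\bZ[\bM]$-module isomorphism $\fm_{m,n,\bZ}\cong V^\natural_{1+mn,\bZ}$.

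For part (3), I would work with the Chevalley-Eilenberg complex $(\bigwedge^*\fn_\bZ,\delta)$. The $\bZ\times\bZ$-grading on $\fn_\bZ$ induces one on each $\bigwedge^i\fn_\bZ$, and the bidegree $(m,n)$ component is a finite rank free $\bZ$-module since only finitely many positive-root decompositions sum to $(m,n)$ and only finitely many $i$ contribute. Lemma \ref{lem:Hodge-structure-on-BKM} provides an adjoint $d$ to $\delta$ with respect to the $\bM$-invariant contravariant form, so that $d$ and $\delta$ are $\bZ[\bM]$-linear differentials preserving bidegree. By Corollary \ref{cor:Laplacian-on-monster-Lie-algebra}, $d\delta+\delta d = (m-1)n\cdot\id$ on the bidegree $(m,n)$ part. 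Whenever $(m-1)n$ is coprime to $|G|$, I would feed this into Corollary \ref{cor:cancellation-for-order-coprime-to-laplacian} with $k=(m-1)n$ to conclude that each $H^i(\fn_\bZ,\bZ)_{m,n}$ lies in the torsion ideal of $\Rep_\bZ(G)$. Corollary \ref{cor:homs-factor-through} then forces any ring homomorphism $\phi:\Rep^\natural_\bZ(G)\to\bC$ to annihilate this torsion class, giving the last assertion.

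The main obstacles are the integrality and $\bM$-equivariance of $d$, and the $\bZ$-freeness of bidegree pieces in non-primitive degrees. The $\bZ$-linearity of $d$ reduces, via the Cartan involution $\omega$, to unimodularity of the pairing between $\fn_{(m,n),\bZ}$ and $\fn_{(-m,-n),\bZ}$, which is recorded in the final sentence of Corollary \ref{cor:description-of_monster-Lie-algebra}; the $\bM$-equivariance follows because $\omega$ and the invariant form are both $\bM$-invariant over $\bZ$. Freeness in ``mysterious'' non-primitive bidegrees is not spelled out in that corollary, but follows because $OCQ_\bZ$ is built as a $\bZ$-torsion-free quotient by the radical of an integral pairing, and finitely generated torsion-free $\bZ$-modules are free. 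Once these points are checked, the Laplacian/cancellation machinery runs essentially formally.
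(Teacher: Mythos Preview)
Your approach matches the paper's proof exactly: parts (1) and (2) are obtained from Corollary~\ref{cor:description-of_monster-Lie-algebra} together with Theorem~\ref{thm:monster-Lie-is-a-BKM-over-Z}, and part (3) is the combination of Corollary~\ref{cor:Laplacian-on-monster-Lie-algebra} (the Laplacian acts as $(m-1)n$) with Corollary~\ref{cor:cancellation-for-order-coprime-to-laplacian}. One small imprecision in your added discussion of obstacles: the final sentence of Corollary~\ref{cor:description-of_monster-Lie-algebra} only asserts self-duality of root spaces in \emph{primitive} bidegrees $\gcd(m,n)\in R^\times$, not in all bidegrees, so it does not by itself supply the unimodularity you invoke for the integrality of $d$ on pieces of $\bigwedge^*\fn_\bZ$ involving non-primitive summands; the paper's proof simply cites the two corollaries without isolating this point.
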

\begin{proof}
The first two claims are given in Corollary \ref{cor:description-of_monster-Lie-algebra} and Theorem \ref{thm:monster-Lie-is-a-BKM-over-Z}.  For the last claim, Corollary \ref{cor:Laplacian-on-monster-Lie-algebra} asserts the Laplacian acts as $(m-1)n$ on the degree $(m,n)$ part of $\bigwedge^* \fn$, and Corollary
\ref{cor:cancellation-for-order-coprime-to-laplacian} implies the Chevalley-Eilenberg complex has homology in the torsion ideal of $\Rep_\bZ(G)$.
\end{proof}

\section{Quasi-replicability}

\subsection{Adams operations}

We will analyze the exterior powers of $\fn$ using Adams operations, which are linear maps on the representation ring.

\begin{defn} \label{defn:adams-operations}
Let $R$ be a subring of $\bC$, and let $G$ be a finite group.  For any $R$-torsion-free $R[G]$-module $X$ of finite rank, we define
\[ {\bigwedge}_{-q}X = \sum_{n \geq 0} (-q)^n \wedge^n X \in \Rep^R(G)[q]. \]
We define the \textbf{Adams operations} $\{ \psi^n\}_{n \geq 1}$ on $\Rep^R(G)$ by setting $\psi^n(X)$ to be the $q^n\frac{dq}{q}$ coefficient of $-d\log \bigwedge_{-q}X \in \Rep^R(G)[[q]]dq$.  That is, we let 
\[ \sum_{n>0} \psi^n(X) q^n\frac{dq}{q} = -\frac{d(\bigwedge_{-q} X)}{\bigwedge_{-q}X} \] 
On the rationalized representation ring, this can be rewritten as the identity
\[ {\bigwedge}_{-q}X = \exp(-\sum_{n > 0} \psi^n(X)q^n/n). \]
in $(\Rep^R(G) \otimes \bQ)[[q]]$.
\end{defn}

\begin{lem}
The Adams operations satisfy the following properties:
\begin{enumerate}
\item If $X$ is a module of dimension 1, then $\psi^n(X) = X^{\otimes n}$.
\item $\psi^n(X\oplus Y) = \psi^n(X) \oplus \psi^n(Y)$
\item $\psi^n$ commutes with base change of the coefficient ring.
\item $\Tr(g|\psi^n(X)) = \Tr(g^n|X)$.
\item Newton's formula holds: $\sum_{j=0}^{n-1} (-1)^{j+1} (\psi^{n-j} V) (\wedge^j V) = (-1)^n n \wedge^n V$, and we obtain the following recursion: $\psi^n V = \sum_{j=1}^{n-1} (-1)^{j+1} (\psi^{n-j} V) (\wedge^j V) - (-1)^n n \wedge^n V$.
\end{enumerate}
\end{lem}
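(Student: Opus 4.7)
The plan is to deduce all five claims directly from manipulations of the generating series $\bigwedge_{-q}X = \sum_{n \geq 0}(-q)^n \wedge^n X$ in $\Rep_R(G)[[q]]$ and its logarithmic derivative. Claims (1)--(3) are essentially formal. For (1), when $X$ has rank $1$ we have $\wedge^n X = 0$ for $n \geq 2$, so $\bigwedge_{-q}X = 1 - qX$ and $-d\log(1-qX) = \sum_{n \geq 1} X^{\otimes n} q^n \tfrac{dq}{q}$, yielding $\psi^n(X) = X^{\otimes n}$. For (2), the standard isomorphism $\wedge^n(X\oplus Y) \cong \bigoplus_{i+j=n}\wedge^i X \otimes \wedge^j Y$ identifies $\bigwedge_{-q}(X\oplus Y)$ with $\bigwedge_{-q}(X)\cdot \bigwedge_{-q}(Y)$, and the logarithmic derivative converts the product to a sum. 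Claim (3) holds because direct sums and exterior powers commute with base change of the coefficient ring, so the generating series is preserved.

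For (4), fix $g \in G$ and recall that $\Tr(g | -)$ is a ring homomorphism $\Rep_R(G) \to \bC$ that commutes with direct sum, tensor product, and exterior powers. After tensoring with $\bC$, diagonalize $g$ on $X_\bC$ with eigenvalues $\lambda_1,\dots,\lambda_d$, so that $\Tr(g\mid \wedge^n X) = e_n(\lambda)$ and consequently
\[ \Tr\bigl(g\,\big|\,{\bigwedge}_{-q}X\bigr) = \prod_{i=1}^d (1-\lambda_i q). \]
Taking logarithmic derivatives gives $\sum_i \tfrac{\lambda_i\, dq}{1-\lambda_i q} = \sum_{n\geq 1}\bigl(\sum_i \lambda_i^n\bigr) q^n \tfrac{dq}{q}$, and the coefficient of $q^n \tfrac{dq}{q}$ is the power sum $\sum_i \lambda_i^n = \Tr(g^n \mid X)$. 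Comparing with the defining series for $\psi^n$ yields $\Tr(g \mid \psi^n(X)) = \Tr(g^n \mid X)$.

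For (5), clear the denominator in the defining equation to obtain the identity
\[ -d\bigl({\bigwedge}_{-q}X\bigr) = \Bigl(\sum_{m>0} \psi^m(X) q^m \tfrac{dq}{q}\Bigr)\cdot {\bigwedge}_{-q}X \]
in $\Rep_R(G)[[q]]\,dq$, which is a polynomial identity with no need for $\bQ$-coefficients. The left-hand side equals $\sum_{j\geq 1}(-1)^{j+1} j (\wedge^j X)\, q^j \tfrac{dq}{q}$, while the right-hand side is the Cauchy product $\sum_n \bigl(\sum_{j=0}^{n-1}(-1)^j (\wedge^j X)\psi^{n-j}X\bigr) q^n \tfrac{dq}{q}$. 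Equating coefficients of $q^n \tfrac{dq}{q}$ and multiplying by $-1$ gives Newton's identity, and isolating the $j=0$ term $-\psi^n X$ produces the stated recursion. There is no serious obstacle here: all five statements are formal consequences of the definition, and the only subtlety — that $\Rep_R(G)$ need not be a field — is harmless because the identities involved are polynomial, and claim (4) can be reduced to $\bC$ via (3).
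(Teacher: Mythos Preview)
Your proof is correct and follows essentially the same approach as the paper's: both arguments work directly from the generating series $\bigwedge_{-q}X$ and its logarithmic derivative, use the multiplicativity of $\bigwedge_{-q}$ on direct sums for (2), base change for (3) and (4), and clear the denominator to extract Newton's identity for (5). The only cosmetic differences are that for (4) the paper splits $X$ into one-dimensional $g$-eigenmodules and invokes (1) and (2) rather than writing out the elementary/power-sum identity explicitly, and for (5) the paper makes the substitution $t=-q$ to tidy the signs before comparing coefficients.
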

\begin{proof}
We prove these in the order they are stated:
\begin{enumerate}
\item For this, we just use the definition: The right side is $\frac{Xdq}{1-qX}$, so we get
\[ \sum_{n>0} \psi^n(X) q^n \frac{dq}{q} = \sum_{n>0} X^{\otimes n} q^n \frac{dq}{q}. \] 
\item This follows from the fact that $\bigwedge_{-q}$ takes direct sums to products, together with the Leibniz rule.
\item This follows from the fact that formation of the exterior algebra commutes with base change.
\item We may base change to a ring with enough roots of unity, then split $X$ into one dimensional eigenmodules for $g$.
\item Multiplying both sides of the defining relation by $\bigwedge_{-q}$ and applying the substitution $t = -q$ yields
\[ \left( \sum_{k=1}^\infty (-1)^{k-1}\psi^k(V) t^{k-1}\right)\left( \sum_{j=0}^\infty \wedge^j V t^j \right) = \sum_{m=1}^\infty m \wedge^m V t^{m-1}, \]
so Newton's formula follows from comparing the coefficients attached to $t^{n-1}$.  The recursion comes from isolating the $j=0$ term.
\end{enumerate}
\end{proof}

\begin{rem} \label{rem:adams-not-so-good}
When $R$ is a field of characteristic zero, we also have $\psi^n(X \otimes Y) = \psi^n(X) \otimes \psi^n(Y)$ and $\psi^m(\psi^n(X)) = \psi^{mn}X$, so in particular, the Adams operations form a commutative monoid of ring homomorphisms generated by $\psi^p$ for $p$ prime.  However, these identities fail for $R$ a general commutative ring.  Explicit counterexamples are given in Remark \ref{rem:adams-counterexample}.
\end{rem}

We then have our version of the twisted denominator identity:

\begin{thm} \label{thm:vanishing-in-exponential}
Let $R$ be a subring of $\bC$, let $G$ be a subgroup of $\bM$, and let $\phi: \Rep^\natural_R(G) \to \bC$ be a ring homomorphism.  Then, for any $(a,b)$ for which $(a-1)b$ is a unit in $R' = \bigcap_{P \supset |G|R} R_P$, the $p^a q^b$ term in 
\[ \exp \left( -\sum_{k=1}^\infty \frac{1}{k} \left(\sum_{m=1}^\infty \sum_{n=-1}^\infty \phi(\psi^k(\fm_{m,n,R})) p^{km} q^{kn} \right) \right) \]
vanishes.
\end{thm}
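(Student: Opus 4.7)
The plan is to recognize the expression in the theorem as a generating function for the Euler characteristic of the graded Chevalley-Eilenberg complex of the positive subalgebra $\fn_R = \bigoplus_{m \geq 1,\, n \geq -1} \fm_{m,n,R}$, and then invoke the Hodge-theoretic cancellation of Corollary \ref{cor:cancellation-for-order-coprime-to-laplacian}. The defining identity for the Adams operations gives
\[ \sum_{i \geq 0} (-1)^i {\bigwedge}^i(X)\, x^i = \exp\Bigl(-\sum_{k \geq 1} \psi^k(X) x^k / k\Bigr) \]
in $(\Rep^\natural_R(G) \otimes \bQ)[[x]]$. Substituting $x = p^m q^n$ and multiplying over $m \geq 1$, $n \geq -1$ rewrites the exponential in the theorem as
\[ \prod_{m, n} \sum_{i \geq 0} (-1)^i \phi\bigl({\bigwedge}^i \fm_{m,n,R}\bigr)\, p^{im} q^{in}. \]
Using the canonical identification $\bigwedge^*(\fn_R) \cong \bigotimes_{(m,n)} \bigwedge^* \fm_{m,n,R}$ of bigraded $R[G]$-modules reorganizes this product into the Euler-characteristic generating function
\[ \sum_{a, b} \Bigl( \sum_{i \geq 0} (-1)^i \phi\bigl({\bigwedge}^i(\fn_R)_{a,b}\bigr) \Bigr) p^a q^b, \]
where the inner sum over $i$ is finite for each $(a, b)$, because $\fn_R$ is supported in first coordinate $\geq 1$ and each root space has finite rank.

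It therefore suffices to show $\sum_i (-1)^i \phi\bigl({\bigwedge}^i(\fn_R)_{a,b}\bigr) = 0$ whenever $(a-1)b$ is a unit in $R'$. The Chevalley-Eilenberg differential $\delta$ and its adjoint $d$ (Lemma \ref{lem:Hodge-structure-on-BKM}) preserve the $I\!I_{1,1}$-bigrading and so restrict to a finite subcomplex on the $(a,b)$-bigraded piece $\bigoplus_i \bigwedge^i(\fn_R)_{a,b}$. By Corollary \ref{cor:Laplacian-on-monster-Lie-algebra}, the Laplacian $d\delta + \delta d$ acts on this subcomplex by the scalar $(a-1)b$, which is a unit in $R'$ by hypothesis. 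Corollary \ref{cor:cancellation-for-order-coprime-to-laplacian} then applies directly and yields the desired vanishing.

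The main subtlety is bookkeeping at the level of the ambient representation ring: for non-primitive $(m,n)$ the root space $\fm_{m,n,R}$ is the ``mysterious'' case of Corollary \ref{cor:description-of_monster-Lie-algebra} and need not a priori lie in $\Rep^\natural_R(G)$, so the individual terms $\phi(\psi^k(\fm_{m,n,R}))$ in the theorem's statement must be interpreted via an extension of $\phi$, or equivalently by working directly with the bigraded pieces $\bigwedge^i(\fn_R)_{a,b}$ of the Chevalley-Eilenberg complex (which are well-defined $R[G]$-modules regardless of the membership of their summands in $\Rep^\natural_R(G)$). Once the formal power series identity is interpreted in a suitably large ring containing $\fn_R$ and its Adams operations, the Hodge cancellation of the preceding paragraph closes the argument.
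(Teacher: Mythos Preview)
Your proof is correct and follows essentially the same approach as the paper: identify the exponential with the alternating generating function $\sum_i (-1)^i \phi\bigl((\bigwedge^i \fn)_{a,b}\bigr)$ via the Adams-operation identity, then apply the Laplacian computation (Corollary \ref{cor:Laplacian-on-monster-Lie-algebra}) together with the Hodge cancellation (Corollary \ref{cor:cancellation-for-order-coprime-to-laplacian}).  Your final paragraph on the bookkeeping subtlety for non-primitive root spaces in $\Rep^\natural_R(G)$ is a point the paper's proof passes over in silence.
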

\begin{proof}
Corollary \ref{cor:Laplacian-on-monster-Lie-algebra} asserts that for all $(a,b)$, the Laplacian acts on $(\bigwedge^* \fn)_{a,b}$ by $(a-1)b$, and under our assumption, this quantity is a unit in $R'$.  Thus, by Corollary \ref{cor:cancellation-for-order-coprime-to-laplacian}, $\sum (-1)^i \phi((\bigwedge^i \fn)_{a,b}) = 0$.  Expanding $(\bigwedge^* \fn)_{a,b}$ in terms of Adams operations yields the claim.
\end{proof}

\begin{defn} \label{defn:quasi-replicates}
Let $R$ be a subring of $\bC$, and let $G$ be a subgroup of $\bM$.  Then, for any ring homomorphism $\phi: \Rep^R(G) \to \bC$, we define the functions
\[ T_\phi^{[m]}(\tau) = \sum_{ad = m} \frac{1}{a} \sum_{n \geq -1} \phi(\psi^a(\fm_{d,n,R})) q^{an} \]
for all $m \geq 1$.
\end{defn}


\begin{prop} \label{prop:feature-of-exterior}
$ -\sum_{k=1}^\infty \frac{1}{k} \left(\sum_{m=1}^\infty \sum_{n=-1}^\infty \phi(\psi^k(\fm_{m,n,R})) p^{km} q^{kn} \right) = -\sum_{m>0}p^m T_\phi^{[m]}(\tau)$ 
\end{prop}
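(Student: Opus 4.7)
The plan is to verify the identity coefficient-by-coefficient as a formal power series in $p$ (with coefficients in a suitable ring of Laurent series in $q$). Both sides manifestly have no constant term in $p$, so I would extract, for each $M \geq 1$, the coefficient of $p^M$ and check that it agrees on both sides.

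First, on the left-hand side, the only contribution to $p^M$ comes from pairs $(k,m)$ of positive integers with $km = M$. Collecting these gives
\[ [p^M]\left(-\sum_{k \geq 1}\frac{1}{k}\sum_{m \geq 1}\sum_{n \geq -1}\phi(\psi^k(\fm_{m,n,R}))\, p^{km}q^{kn}\right) = -\sum_{km = M}\frac{1}{k}\sum_{n \geq -1}\phi(\psi^k(\fm_{m,n,R}))\, q^{kn}. \]
Second, on the right-hand side, by Definition \ref{defn:quasi-replicates}, the coefficient of $p^M$ is
\[ -T_\phi^{[M]}(\tau) = -\sum_{ad = M}\frac{1}{a}\sum_{n \geq -1}\phi(\psi^a(\fm_{d,n,R}))\, q^{an}. \]
Relabeling $(a,d) \mapsto (k,m)$ in this second expression exhibits the two coefficients as the same sum, and the identity follows term by term in $p$.

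There is no genuine obstacle: the proposition is a bookkeeping reorganization of a double sum that repackages the exponent appearing in Theorem \ref{thm:vanishing-in-exponential} in a form adapted to the graded pieces $T_\phi^{[m]}$. The content of the statement is not in the verification itself, but rather in recording that the vanishing statement of Theorem \ref{thm:vanishing-in-exponential} can be phrased as the vanishing of certain coefficients in $\exp\bigl(-\sum_{m>0} p^m T_\phi^{[m]}(\tau)\bigr)$, which is the form needed to motivate and apply the subsequent Definition \ref{defn:quasi-replicable} of quasi-replicability intrinsically in terms of the functions $T_\phi^{[m]}$.
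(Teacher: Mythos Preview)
Your proposal is correct and follows essentially the same approach as the paper: both arguments amount to reindexing the double sum over $(k,m)$ with $km=M$ and identifying the result with the defining formula for $T_\phi^{[M]}$. The paper writes this as a single chain of equalities (substituting $k\mapsto a$, $m\mapsto m/a$ and then grouping by the $p$-exponent), while you phrase it as comparing the $p^M$-coefficients, but the content is identical.
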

\begin{proof}
\[ \begin{aligned}
-\sum_{k=1}^\infty \frac{1}{k} \left(\sum_{m=1}^\infty \sum_{n=-1}^\infty \phi(\psi^k(\fm_{m,n,R})) p^{km} q^{kn} \right)
&= -\sum_{m=1}^\infty \sum_{a|m} \frac{1}{a} \sum_{n=-1}^\infty \phi(\psi^a(\fm_{m/a,n,R})) p^m q^{an} \\
&= -\sum_{m=1}^\infty p^m \sum_{ad = m} \frac{1}{a} \sum_{n \geq -1} \phi(\psi^a(\fm_{d,n,R})) q^{an} \\
&= -\sum_{m>0}p^m T_\phi^{[m]}(\tau)
\end{aligned} \]
\end{proof}

\begin{cor} \label{cor:vanishing-coefficients}
The $p^a q^b$ coefficient of
\[ \exp \left( -\sum_{m>0}p^m T_\phi^{[m]}(\tau) \right) \]
vanishes for all $(a-1)b$ coprime to $|G|$.
\end{cor}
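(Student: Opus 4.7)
The plan is to derive the corollary as an immediate consequence of Theorem \ref{thm:vanishing-in-exponential} and Proposition \ref{prop:feature-of-exterior}, which together have already done all the substantive work; the corollary is a notational repackaging.

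First I would verify that the hypothesis ``$(a-1)b$ coprime to $|G|$'' matches the hypothesis ``$(a-1)b \in (R')^\times$'' used in Theorem \ref{thm:vanishing-in-exponential}. Because $a,b$ are the integer indices of the $I\!I_{1,1}$-bigrading on $\fm_\bZ$, the product $(a-1)b$ is an integer, and $R' = \bigcap_{P \supset |G|R} R_P$ is by construction the localization of $R$ at every element coprime to $|G|$. Hence an integer lies in $(R')^\times$ if and only if it is coprime to $|G|$, so the two hypotheses coincide on integer values of $(a-1)b$.

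Next, Proposition \ref{prop:feature-of-exterior} gives the formal identity
\[ -\sum_{k=1}^\infty \frac{1}{k}\sum_{m=1}^\infty \sum_{n=-1}^\infty \phi(\psi^k(\fm_{m,n,R}))\, p^{km} q^{kn} \;=\; -\sum_{m>0} p^m T_\phi^{[m]}(\tau) \]
as formal power series in $p,q$ with complex coefficients. Exponentiation preserves this equality coefficient by coefficient, so the $p^a q^b$ coefficient of $\exp\bigl(-\sum_{m>0} p^m T_\phi^{[m]}(\tau)\bigr)$ equals the $p^a q^b$ coefficient of the exponential appearing in Theorem \ref{thm:vanishing-in-exponential}. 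The latter vanishes by that theorem whenever $(a-1)b$ is coprime to $|G|$, which yields the corollary.

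The main obstacle is essentially nonexistent at this step. Every piece of genuine content is imported from earlier results: the integral no-ghost theorem and the identification of the $(m,n)$ root space of $\fm_R$ with $V^\natural_{1+mn,R}$ (in primitive degrees), the Laplacian eigenvalue computation $(m-1)n$ on $\bigwedge^* \fn$, the fact that Corollary \ref{cor:cancellation-for-order-coprime-to-laplacian} forces the Chevalley--Eilenberg homology into the torsion ideal of $\Rep_\bZ(G)$, and finally the Koszul reformulation $\bigwedge_{-q} = \exp(-\sum_n \psi^n(-) q^n/n)$ that rewrites an alternating sum of exterior powers as an exponential of Adams operations. With all of these in hand, the present corollary requires only the two-line translation above.
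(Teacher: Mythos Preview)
Your proof is correct and matches the paper's own argument exactly: rewrite the exponential via Proposition \ref{prop:feature-of-exterior}, then invoke Theorem \ref{thm:vanishing-in-exponential}. (One tiny quibble: the ``if and only if'' in your hypothesis check is stronger than needed and can fail when $R$ already inverts a prime dividing $|G|$; only the ``if'' direction is used, and that direction holds by B\'ezout.)
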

\begin{proof}
By Proposition \ref{prop:feature-of-exterior}, we have  
\[ \exp \left( -\sum_{m>0}p^m T_\phi^{[m]}(\tau) \right) = \exp \left( -\sum_{k=1}^\infty \frac{1}{k} \left(\sum_{m=1}^\infty \sum_{n=-1}^\infty \phi(\psi^k(\fm_{m,n,R})) p^{km} q^{kn} \right) \right) \]
and Theorem \ref{thm:vanishing-in-exponential} asserts that the $p^a q^b$ term vanishes when $(a-1)b$ is coprime to $|G|$.
\end{proof}

\subsection{Quasi-replicability, main result}

\begin{defn} \label{defn:quasi-replicable}
A periodic holomorphic function on the complex upper half-plane $f(\tau) = q^{-1} + \sum_{n \geq 0} a_n q^n$ is \textbf{quasi-replicable} of exponent $N$ if for $m \geq 1$, there exist holomorphic functions $f^{[m]}(\tau)$ on  the complex upper half-plane, satisfying the following conditions:
\begin{enumerate}
\item $f^{[1]}(\tau) = f(\tau)$.
\item $f^{[m]}(\tau)$ is periodic in $\tau$ with $q$-expansion $f^{[m]}(\tau) = \frac{1}{m}q^{-m} + \sum_{n > -m} a_n^{[m]} q^n$.
\item If $\gcd(m,n) = 1$, then the $q^n$ coefficient of $f^{[m]}(\tau)$ is equal to the $q^{mn}$ coefficient of $f(\tau)$, i.e., $a_n^{[m]} = a_{mn}^{[1]}$.
\item the $p^a q^b$ coefficient of
\[ \exp \left( -\sum_{m>0}p^m f^{[m]}(\tau) \right)
\]
vanishes for all $(a-1)b$ coprime to $N$. 
\end{enumerate}
\end{defn}

We now have our main theorem:

\begin{thm} \label{thm:main}
Let $R$ be a subring of $\bC$, and let $G$ be a subgroup of $\bM$.  Then, for any ring homomorphism $\phi: \Rep_R^\natural(G) \to \bC$, the ``generalized McKay-Thompson series'' $T_\phi(\tau) = \sum_{n \geq 0} \phi(V^\natural_{n,R}) q^{n-1}$ is quasi-replicable of exponent $|G|$.
\end{thm}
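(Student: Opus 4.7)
The approach is to take $f^{[m]}(\tau) := T_\phi^{[m]}(\tau)$ from Definition \ref{defn:quasi-replicates} as the auxiliary functions, and to verify in turn the four defining properties of a quasi-replicable power series of exponent $|G|$. All four are essentially formal consequences of results already established in the previous sections, principally Corollary \ref{cor:description-of_monster-Lie-algebra} (the identification of $\fm_{m,n,R}$ with $V^\natural_{1+mn,R}$ when $\gcd(m,n)$ is a unit in $R' = \bigcap_{P \supset |G|R} R_P$) and Corollary \ref{cor:vanishing-coefficients} (vanishing of the relevant exponential coefficients).

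I would first dispatch property (1): setting $m=1$ collapses the definition of $T_\phi^{[m]}$ to the single term $\sum_{n\geq -1}\phi(\fm_{1,n,R})q^n$, and since $\gcd(1,n)=1$ is always a unit, Corollary \ref{cor:description-of_monster-Lie-algebra} identifies this with $\sum_{n\geq -1}\phi(V^\natural_{1+n,R})q^n = T_\phi(\tau)$. For property (2), the lowest-degree contribution to $T_\phi^{[m]}$ comes from the triple $(a,d,n) = (m,1,-1)$; because $\fm_{1,-1,R} \cong V^\natural_{0,R}$ is the trivial rank-one $R[G]$-module, on which every Adams operation acts as the identity, this yields the required leading term $\frac{1}{m}q^{-m}$, and the remaining summands contribute to $q^n$ for $n>-m$. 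For property (3), assuming $\gcd(m,n)=1$, the divisibility constraints $a\mid m$ and $a\mid n$ together force $a=1$, so $a_n^{[m]} = \phi(\fm_{m,n,R})$; a second invocation of Corollary \ref{cor:description-of_monster-Lie-algebra} (with $\gcd(m,n)=1 \in (R')^\times$) identifies this with $\phi(V^\natural_{1+mn,R}) = a_{mn}^{[1]}$.

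Property (4) then follows immediately from Corollary \ref{cor:vanishing-coefficients} once Proposition \ref{prop:feature-of-exterior} is used to rewrite $-\sum_{m>0}p^m T_\phi^{[m]}$ as the logarithm of the exterior-algebra generating series attached to the Chevalley--Eilenberg complex of $\fn_R$. I do not expect a substantial obstacle at this stage, since the deep inputs (Borcherds's integral no-ghost theorem, the BKM structure of $\fm_\bZ$ from Theorem \ref{thm:properties-of-monster-Lie-algebra}, and the Laplacian eigenvalue computation forcing homological cancellation in degrees $(a,b)$ with $(a-1)b$ coprime to $|G|$) have already been assembled. The only point that requires genuine care is the identification $\phi(\fm_{m,n,R}) = \phi(V^\natural_{1+mn,R})$ used in property (3): in general the two $R[G]$-modules differ by a torsion class in $\Rep_R(G)$, but Corollary \ref{cor:homs-factor-through} lets $\phi$ factor through $\Rep^\natural_{R'}(G)$, where those torsion classes are killed and the two graded pieces become genuinely isomorphic, so the $\phi$-values coincide as required.
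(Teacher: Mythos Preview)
Your proposal is correct and follows essentially the same route as the paper's proof: set $f^{[m]}=T_\phi^{[m]}$, verify conditions (1)--(3) directly from the defining formula and Corollary~\ref{cor:description-of_monster-Lie-algebra}, and obtain condition (4) from Corollary~\ref{cor:vanishing-coefficients}. The only superfluous step is your final caution about torsion classes in property (3): since $\gcd(m,n)=1$ is a unit in $R$ itself, Corollary~\ref{cor:description-of_monster-Lie-algebra} already gives a genuine $R[G]$-module isomorphism $\fm_{m,n,R}\cong V^\natural_{1+mn,R}$, so no passage to $R'$ is needed there.
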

\begin{proof}
We set $f^{[m]}(\tau) = T_\phi^{[m]}(\tau) = \sum_{ad = m} \frac{1}{a} \sum_{n \geq -1} \phi(\psi^a(\fm_{d,n,R})) q^{an}$ following Definition \ref{defn:quasi-replicates}.  Then, the first condition follows from the isomorphism $\fm_{1,n,R} \cong V^\natural_{1+n,R}$ given in Corollary \ref{cor:description-of_monster-Lie-algebra}.  The second and third conditions follow immediately from the defining formula for $T_\phi^{[m]}(\tau)$ together with the fact that the unique real simple root of $\fm$ has degree $(1,-1)$ (i.e., $\fm_{m,n,R} = 0$ when $mn < -1$).  The fourth condition is precisely Corollary \ref{cor:vanishing-coefficients}.
\end{proof}

\begin{rem}
The quasi-replicability condition is related to the notion of ``replicability'' introduced in \cite{CN79} and further explained in \cite{N84}, in the following way: If $f(\tau)$ is replicable, then there are uniquely defined holomorphic functions $f^{(k)}(\tau)$ for all $k \geq 1$, such that the following equality holds in some neighborhood of the cusp $(i \infty, i \infty)$ in the product of two complex upper half-planes:
\[ f(\sigma) - f(\tau) = p^{-1} \exp\left(-\sum_{m =1}^\infty \frac{p^m}{m}\sum_{ad=m, 0 \leq b < d} f^{(a)}\left(\frac{a\tau+b}{d} \right) \right), \]
where $p = e^{2\pi i \sigma}$.  Setting $f^{[m]}(\tau) = \frac{1}{m}\sum_{ad=m} f^{(a)}(\frac{a\tau+b}{d})$ we find that replicable functions are quasi-replicable of exponent 1.  There is also a notion of ``finite order'' replicability, defined by the condition that the ``replicates'' $f^{(k)}$ are periodic in $k$.  We can define ``quasi-replicate'' functions $T_\phi^{(k)}(\tau) = \sum_{n \geq -1} \phi(\psi^k(V^\natural_n))q^{n-1}$, and these are periodic in $k$ when the Adams operations $\psi^k$ are periodic.  However, it appears that we do not have enough control over $\fm_{m,n}$ for non-primitive vectors $(m,n)$ to turn this into a good, precise analogue of finite order replicability.
\end{rem}

\section{Explicit results}

\subsection{Cyclic subgroups of order 4 in the Monster}

Let $G$ be a cyclic group of order 4.  The indecomposable $\bZ[G]$-modules were classified into 9 isomorphism types by Roiter \cite{R60}, and independently in the Ph. D. dissertations \cite{K62} and \cite{T61}.  The tensor structure is given in \cite{R65}, and we use this to classify the homomorphisms $\phi: \Rep_\bZ(G) \to \bC$.

\begin{defn}
We fix notation for the 9 isomorphism classes of indecomposable $\bZ[G]$-modules.  Let $g$ be a generator of $G$.
\begin{enumerate}
\item $A$ is the trivial module $\bZ$.
\item $B$ is $\bZ$ with $g$ acting as $-1$.
\item $C$ is the rank 2 module $\bZ x \oplus \bZ y$ with $gx = y, gy = -x$.
\item $D$ is the group ring $\bZ[G]$
\item $E$ is the rank 2 module $\bZ x \oplus \bZ y$ with $gx = y, gy = x$.
\item $C^A$ is the unique non-split extension of $C$ by $A$.
\item $C^B$ is the unique non-split extension of $C$ by $B$.
\item $C^E$ is the non-split extension of $C$ by $E$ that is not the group ring.
\item $C^{AB}$ is the non-split extension of $C$ by $A \oplus B$.
\end{enumerate}
\end{defn}

\begin{thm} \label{thm:order-4-ring-homs}
Let $\phi: \Rep_\bZ(G) \to \bC$ be a ring homomorphism.  Then $\phi$ takes the indecomposable modules $(A,B,C,D,E,C^A,C^B, C^E, C^{AB})$ to one of the following tuples:
\begin{enumerate}
\item $(1,1,2,4,2,3,3,4,4)$, from the rank, or ``trace of 1'' homomorphism.
\item $(1,1,-2,0,2,-1,-1,0,0)$, from the ``trace of $g^2$'' homomorphism.
\item $(1, 1, 2, 0, 2, 1, 1, 2, 2)$, from the ``total dimension of Tate cohomology of $g^2$'' homomorphism.
\item $(1, -1, 0, 0, 0, 1, -1, 0, 0)$, from the ``trace of $g$'' homomorphism.
\item $(1, -1, 0, 0, 0, -1, 1, 0, 0)$, $(1, 1, 0, 0, 0, 1, 1, 0, 0)$, and $(1, 1, 0, 0, 0, -1, -1, 0, 0)$.  We call these ``twisted versions'' of the trace of $g$, because they are the same up to sign.
\item $(1,1, 0, 0, 0, 1, 1, 2, 2)$, from an exotic function.
\end{enumerate}
In particular, $\phi$ takes values in integers.
\end{thm}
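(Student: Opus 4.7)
The approach is to exploit the explicit tensor product multiplication table for the Green ring $\Rep_\bZ(G)$, with $G$ cyclic of order $4$, which was determined by Reiner~\cite{R65}. A ring homomorphism $\phi: \Rep_\bZ(G) \to \bC$ is completely determined by its values on the nine indecomposable generators, and I would obtain the classification by enumerating the solutions to the system of polynomial equations imposed by the known tensor relations.

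First I would extract a few ``global'' constraints. Since $A$ is the identity, $\phi(A) = 1$. Since $B \otimes B \cong A$, $\phi(B) \in \{\pm 1\}$. The standard isomorphism $\bZ[G] \otimes_\bZ V \cong \bZ[G]^{\rank V}$ (via the twist $g \otimes v \mapsto g \otimes g^{-1}v$) gives $D \otimes V \cong D^{\rank V}$ for every $V$, so $\phi(D)(\phi(V) - \rank V) = 0$. This yields a clean dichotomy: either $\phi$ is the rank homomorphism (the first tuple), or $\phi(D) = 0$, which I assume henceforth. The relations $B \otimes C \cong C$, $B \otimes E \cong E$, $C^{\otimes 2} \cong 2E$, $E^{\otimes 2} \cong 2E$, and $C \otimes E \cong 2C$ (each proved by a short basis computation) then give the quadratic system
\[ (\phi(B) - 1)\phi(C) = (\phi(B) - 1)\phi(E) = 0, \quad \phi(C)^2 = 2\phi(E), \quad \phi(E)^2 = 2\phi(E), \]
whose solutions $(\phi(B), \phi(C), \phi(E))$ are exactly $(-1,0,0)$, $(1,0,0)$, $(1,2,2)$, and $(1,-2,2)$.

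To pin down the values on the four ``extension'' modules $C^A, C^B, C^E, C^{AB}$, I would tensor their defining short exact sequences with $B$. Because $- \otimes B$ is an autoequivalence of $\Rep_\bZ(G)$, it preserves nonsplitness, yielding $B \otimes C^A \cong C^B$, $B \otimes C^B \cong C^A$, $B \otimes C^E \cong C^E$, and $B \otimes C^{AB} \cong C^{AB}$. These give $\phi(C^B) = \phi(B)\phi(C^A)$ and force $\phi(C^E) = \phi(C^{AB}) = 0$ whenever $\phi(B) = -1$. Further relations, obtained by decomposing $C \otimes C^A$, $E \otimes C^E$, $(C^A)^{\otimes 2}$, $(C^{AB})^{\otimes 2}$, and $C^A \otimes C^{AB}$ into indecomposables using Reiner's table, produce enough additional quadratic constraints to collapse the remaining unknowns $\phi(C^A), \phi(C^E), \phi(C^{AB})$ in each of the four branches to the finite list of candidate tuples appearing in the statement. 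To close the argument, I would verify existence of each tuple: the first four are realized by evident functionals (rank, traces of $g^2$ and $g$, total dimension of Tate cohomology of $g^2$); the three sign-twisted variants of the ``trace of $g$'' map are checked directly against the relations; and the exotic tuple $(1,1,0,0,0,1,1,2,2)$ must be checked against the full table.

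The main obstacle will be the bookkeeping in the previous paragraph: computing the products $C^A \otimes C$, $(C^A)^{\otimes 2}$, $(C^{AB})^{\otimes 2}$, and similar, and then verifying that the exotic homomorphism is consistent with every entry of the multiplication table, since it is not induced by any obvious trace- or Tate-cohomology functional. The integrality of $\phi$-values is then automatic, since all relations are polynomial over $\bZ$ and the enumerated solutions happen to be integers.
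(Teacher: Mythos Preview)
Your proposal is correct and follows essentially the same route as the paper: both arguments use Reiner's multiplication table for $\Rep_\bZ(\bZ/4\bZ)$ and case-split first on $\phi(D)$ (via $D \otimes X \cong D^{\operatorname{rank} X}$), then on $\phi(C),\phi(E)$ (via $C^2 \cong E^2 \cong 2E$), and finally on the sign choices for $\phi(B),\phi(C^A)$ to pin down the extension modules. The only cosmetic difference is that you constrain $\phi(B)$ early using $B\otimes C \cong C$ and $B\otimes E \cong E$, whereas the paper postpones this until Case~3; your explicit final verification of the exotic tuple against the full table is also a good touch that the paper leaves implicit.
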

\begin{proof}
We reproduce the multiplication table from \cite{R65}:

\vspace{2mm}

\begin{tabular}{c|c|c|c|c|c|c|c|c|c}
& $A$ & $B$ & $C$ & $D$ & $E$ & $C^A$ & $C^B$ & $C^E$ & $C^{AB}$ \\ \hline
$A$ & $A$ & $B$ & $C$ & $D$ & $E$ & $C^A$ & $C^B$ & $C^E$ & $C^{AB}$ \\
$B$ &  & $A$ & $C$ & $D$ & $E$ & $C^B$ & $C^A$ & $C^E$ & $C^{AB}$ \\
$C$ & & & $2E$ & $2D$ & $2C$ & $D+E$ & $D+E$ & $C+D+E$ & $C+D+E$ \\
$D$ & & & & $4D$ & $2D$ & $3D$ & $3D$ & $4D$ & $4D$ \\
$E$ & & & & & $2E$ & $C+D$ & $C+D$ & $C+D+E$ & $C+D+E$ \\ \hline
$C^A$ & & & & & & $A+2D$ & $B+2D$ & $C^{AB} + 2D$ & $C^E + 2D$ \\
$C^B$ & & & & & & & $A + 2D$ & $C^{AB} + 2D$ & $C^E + 2D$ \\
$C^E$ & & & & & & & & $C^E + C^{AB} + 2D$ & $C^E + C^{AB} + 2D$ \\
$C^{AB}$ & & & & & & & & &  $C^E + C^{AB} + 2D$ 
\end{tabular}

\vspace{2mm}

\noindent\textbf{Case 1}:
If $\phi(D)$ is nonzero, then by $D \otimes X \cong D^{\oplus \rank X}$, we get the rank homomorphism.

\noindent\textbf{Case 2}:
We assume $\phi(D) = 0$.  Since the tensor squares of $C$ and $E$ are both $2E$, we consider the case $\phi(E) = 2$.  Then, $\phi(C)$ can be $2$ or $-2$.

\noindent\textbf{Case 2a}:
Suppose $\phi(D) = 0$ and $\phi(C) = 2$.  Then, the remaining values of $\phi$ are uniquely determined by tensoring with $C$ and $E$, and we get the ``total dimension of Tate cohomology of $g^2$'' homomorphism.

\noindent\textbf{Case 2b}:
Suppose $\phi(D) = 0$ and $\phi(C) = -2$.  Then, the remaining values of $\phi$ are uniquely determined by tensoring with $C$ and $E$, and we get the ``trace of $g^2$'' homomorphism.

\noindent\textbf{Case 3}:
Suppose $\phi(D) = \phi(E) = \phi(C) = 0$.  Then, $\phi(B)^2 = \phi(C^A)^2 = \phi(C^B)^2 = 1$, and $\phi(C^B) = \phi(B)\phi(C^A)$, so we split into 4 cases depending on the signs of $\phi(B)$ and $\phi(C^A)$.  Because $\phi(C^{AB})\phi(C^A) = \phi(C^{AB})\phi(C^B) = \phi(C^E)$, we find that $\phi(B) = -1$ implies $\phi(C^{AB}) = \phi(C^E) = 0$.  If $\phi(B) = 1$ and $\phi(C^A) = -1$, then $\phi(C^E)^2 = \phi(C^E + C^{AB} + 2D) = 0$, so once again $\phi(C^{AB}) = \phi(C^E) = 0$.  These three cases yield the ``trace of $g$'' homomorphism and  two twisted versions.

\noindent\textbf{Case 3'}: If $\phi(B) = \phi(C^A) = 1$, then $\phi(C^E) = \phi(C^{AB})\phi(C^A) = \phi(C^{AB})$, so $\phi(C^E)^2 = \phi(C^E + C^{AB} + 2D) = 2\phi(C^E)$.  We conclude that $\phi(C^E)$ is $0$ or $2$.  These two values yield the remaining maps.
\end{proof}

\begin{rem}
The indecomposable modules $C^E$ and $C^{AB}$ cannot be distinguished by homomorphisms from the representation ring.  However, we will see that they never appear in the decomposition of $V^\natural_\bZ$ under any order 4 automorphism.
\end{rem}

We now consider the decomposition of $V^\natural_\bZ$ under the action of automorphisms of order 4.

\begin{lem} \label{lem:restriction-from-order-4-to-order-2}
The indecomposable $\bZ[G]$-modules restrict to $\langle g^2 \rangle$ in the following way, where $I$ denotes the rank 1 module with $g^2$ acting by $-1$, and $H$ denotes the subgroup $\langle g^2 \rangle$:

\vspace{2mm}

\begin{tabular}{c|ccccccccc}
& $A$ & $B$ & $C$ & $D$ & $E$ & $C^A$ & $C^B$ & $C^E$ & $C^{AB}$ \\ \hline
$H$-rep. & $\bZ$ & $\bZ$ & $2I$ & $2\bZ[H]$ & $2\bZ$ & $\bZ[H] + I$ & $\bZ[H] + I$ & $\bZ + I + \bZ[H]$ & $\bZ + I + \bZ[H]$
\end{tabular}

\vspace{2mm}

Furthermore, if $G$ is generated by an element $g$ of order 4 in $\bM$, then we have the following lists of possible indecomposable representations in $V^\natural_{n,\bZ}$:
\begin{enumerate}
\item $g$ in class 4A: $A$, $B$, $D$, $E$ for $n$ even, and $C$, $D$, $C^A$, $C^B$ for $n$ odd.
\item $g$ in class 4B: $A$, $B$, $D$, $E$ for all $n$.
\item $g$ in class 4C: $A$, $B$, $D$, $E$ for $n$ even, and $C$, $D$, $C^A$, $C^B$ for $n$ odd.
\item $g$ in class 4D: $A$, $B$, $D$, $E$ for $n$ even, and $C$, $D$, $C^A$, $C^B$ for $n$ odd.
\end{enumerate}
In particular, the indecomposable representations $C^E$ and $C^{AB}$ do not appear.
\end{lem}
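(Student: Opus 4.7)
The claim splits into two components: a restriction table for the nine indecomposable $\bZ[G]$-modules, and a per-class list of indecomposables that can appear in $V^\natural_{n,\bZ}$ for each of the four Monster conjugacy classes of order four. I would treat these in turn. For the restriction table the plan is a direct Krull--Schmidt computation. Every $\bZ[H]$-module with $H = \langle g^2\rangle \cong \bZ/2$ decomposes uniquely as $a\bZ \oplus b I \oplus c\bZ[H]$, and the triple $(a,b,c)$ is pinned down by the three invariants rank $= a + b + 2c$, trace $\Tr(g^2) = a-b$, and $\dim_{\bF_2}\hat H^0(H,-) = a$ (using $\hat H^0(H,\bZ) \cong \bZ/2$ while $\hat H^0(H,I) = \hat H^0(H,\bZ[H]) = 0$). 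For $A, B, C, D, E$ all three invariants are immediate from the explicit $g$-action. For $C^A$ and $C^B$ one writes the non-trivial extension cocycle explicitly, checks that $\hat H^0$ vanishes, and concludes $C^A|_H = C^B|_H = I + \bZ[H]$; the rank-four modules $C^E$ and $C^{AB}$ are handled by the same method to give $\bZ + I + \bZ[H]$.

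For the per-class statement I would combine this table with the known $\bZ[H]$-module structure of $V^\natural_\bZ$: for each conjugacy class of involutions in $\bM$, the multiplicities $(a_n, b_n, c_n)$ of $\bZ, I, \bZ[H]$ in $V^\natural_{n,\bZ}|_H$ are determined from the Modular Moonshine work of \cite{BR96}, \cite{B99}, and \cite{C17}. Inspection of the restriction table separates the nine indecomposables into two parity groups: $\{A, B, D, E\}$, whose $H$-restrictions contain no $I$-summand, and $\{C, C^A, C^B, C^E, C^{AB}\}$, whose restrictions do. Hence modules in the second group can contribute to $V^\natural_{n,\bZ}$ only when $b_n > 0$. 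Using the power map on $\bM$ to assign the correct involution class to $g^2$ in each case of 4A, 4B, 4C, 4D, the Modular Moonshine input gives $b_n = 0$ for all $n$ in the 4B case, and $b_n = 0$ precisely for even $n$ in the 4A, 4C, 4D cases, producing the claimed lists.

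The main obstacle is eliminating $C^E$ and $C^{AB}$ from the odd-$n$ slots in the 4A, 4C, 4D cases. As Theorem \ref{thm:order-4-ring-homs} records, no ring homomorphism $\Rep_\bZ(G) \to \bC$ distinguishes $C^E$ from $C^{AB}$, and the full list of ring-homomorphism values together with the $H$-restriction of $C^E$ also matches those of $C^A \oplus B$ (and symmetrically $C^{AB}$ with $C^B \oplus A$). Thus the invariants considered so far cannot by themselves rule out $C^E$ or $C^{AB}$ in favour of $C^A \oplus B$ or $C^B \oplus A$. To close this gap I would need finer input: most naturally an explicit computation of the $\bZ[G]$-decomposition in a few low degrees using the construction of $V^\natural_\bZ$ in \cite{C17}, followed by an inductive propagation along the graded integral Monster Lie algebra and Adams-operation machinery from Theorem \ref{thm:properties-of-monster-Lie-algebra} to reach arbitrary $n$.
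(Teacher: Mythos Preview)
Your approach to the restriction table is essentially the paper's: the three invariants $\rank$, $\Tr(g^2)$, and total Tate dimension of $g^2$ are exactly cases~1 and~2 in the proof of Theorem~\ref{thm:order-4-ring-homs}, and they pin down the $\bZ[H]$-decomposition uniquely.

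However, your treatment of the per-class lists has a genuine gap, and your proposed workaround is unnecessary. You extract from Modular Moonshine only the vanishing pattern of $b_n$ (the multiplicity of $I$), and you conclude that this alone cannot separate $C^E$ from $C^A \oplus B$. But the Modular Moonshine input is strictly stronger than what you use. By Theorems~5.2 and~5.3 of \cite{BR96} (with the $p=2$ assumption resolved in \cite{C17}), when $g^2$ is in class 2B the $\bZ[H]$-decomposition of $V^\natural_{n,\bZ}$ for \emph{odd} $n$ consists \emph{only} of $I$ and $\bZ[H]$, i.e., $a_n = 0$. Now look at your restriction table again, this time tracking the $\bZ$-summand: the indecomposables whose $H$-restriction contains \emph{no} copy of the trivial module $\bZ$ are exactly $\{C, D, C^A, C^B\}$, whereas $C^E$ and $C^{AB}$ each restrict to $\bZ + I + \bZ[H]$ and so contribute a trivial summand. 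Since $a_n = 0$ for odd $n$ in the 2B cases, $C^E$ and $C^{AB}$ are excluded immediately. In the 2A case (class 4B) the same modules are excluded for the reason you already gave ($b_n = 0$).

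This is precisely the paper's one-line argument: ``the lack of terms combining $\bZ$ and $I$ eliminates $C^E$ and $C^{AB}$ from possibility.'' No explicit low-degree computation or inductive propagation is needed.
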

\begin{proof}
Using cases 1 and 2 in the proof of Theorem \ref{thm:order-4-ring-homs}, we obtain the decomposition.

By Theorems 5.2, 5.3 in \cite{BR96}, if $g^2$ lies in class 2A (i.e., for class 4B), then $V^\natural_{n,\bZ}$ decomposes into $\bZ$ and $\bZ[H]$ for all $n$, and if $g^2$ lies in class 2B (i.e., for classes 4A, 4C, 4D), then $V^\natural_{n,\bZ}$ decomposes into $\bZ$ and $\bZ[H]$ for $n$ even and $I$ and $\bZ[H]$ for $n$ odd.  The lack of terms combining $\bZ$ and $I$ eliminates $C^E$ and $C^{AB}$ from possibility.
\end{proof}

We continue with class 4A, where we have the most complete information.  The key property that helps us is that $T_{2B} = q^{-1} + 276q - 2048q^2 + 11202q^3 - \cdots$ while $T_{4A} = q^{-1} + 276q + 2048q^2 + 11202q^3 + \cdots$, i.e., the coefficients of $T_{2B}$ have the same size as those of $T_{4A}$, but alternate sign.

\begin{thm} \label{thm:4A}
Let $G$ be a cyclic subgroup of $\bM$ generated by an element in class 4A.  Then, $V^\natural_\bZ$ decomposes as a direct sum of the indecomposable $\bZ[G]$-modules $A$, $D$, and $C^A$, and they generate a subring of $\Rep_\bZ(G)$ isomorphic to $\bZ[d,c]/(d^2 - 4d, c^2-2d-1,(c-3)d)$.  There are exactly 3 homomorphisms from this ring to $\bC$, corresponding to the traces of elements in classes 1A, 2B, and 4A.  The multiplicities $a_n, d_n, c_n$ of the indecomposable modules $A,D,C^A$ in $V^\natural_{n,\bZ}$ are given by the generating function formula
\[ \left(\begin{array}{c} \sum a_n q^{n-1} \\ \sum d_n q^{n-1} \\ \sum c_n q^{n-1} \end{array} \right) = \left(\begin{array}{ccc} 0 & 1/2 & 1/2 \\ 1/4 & 1/4 & -1/2 \\ 0 & -1/2 & 1/2 \end{array} \right) \left(\begin{array}{c}T_{1A}(\tau) \\ T_{2B}(\tau) \\ T_{4A}(\tau) \end{array} \right) . \]
\end{thm}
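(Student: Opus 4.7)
The plan is to intersect the lists of possible indecomposable summands from Lemma~\ref{lem:restriction-from-order-4-to-order-2} with the constraints from Monstrous Moonshine for classes 1A, 2B, and 4A, then exploit a parity identity between $T_{2B}$ and $T_{4A}$ to eliminate all summands other than $A$, $D$, $C^A$. First I would write $V^\natural_{n,\bZ} = a_n A + b_n B + d_n D + e_n E$ for even $n$ and $V^\natural_{n,\bZ} = \gamma_n C + d_n D + c_n C^A + \beta_n C^B$ for odd $n$, with non-negative integer multiplicities, as permitted by Lemma~\ref{lem:restriction-from-order-4-to-order-2}. Reading the character values of these indecomposables from the proof of Theorem~\ref{thm:order-4-ring-homs} and applying Monstrous Moonshine to the traces of $1, g, g^2$ (which give $T_{1A}, T_{4A}, T_{2B}$) yields, for even $n$,
\[ a_n + b_n + 4d_n + 2e_n = T_{1A,n-1}, \quad a_n - b_n = T_{4A,n-1}, \quad a_n + b_n + 2e_n = T_{2B,n-1}, \]
and, for odd $n$,
\[ 2\gamma_n + 4d_n + 3c_n + 3\beta_n = T_{1A,n-1}, \quad c_n - \beta_n = T_{4A,n-1}, \quad -2\gamma_n - c_n - \beta_n = T_{2B,n-1}. \]

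The crucial step is the identity $T_{2B,m} = (-1)^{m+1} T_{4A,m}$, which follows from the $\eta$-quotient formulas in the theorem statement: after some manipulation both $T_{2B}$ and $T_{4A}$ reduce to $q^{-1}\prod_{n\text{ odd}}(1\pm q^n)^{24}$ plus an integer constant, so their coefficients coincide in absolute value and alternate in sign. For even $n$, $n-1$ is odd and $T_{2B,n-1} = T_{4A,n-1}$; subtracting the trace-of-$g$ equation from the trace-of-$g^2$ equation gives $2b_n + 2e_n = 0$, forcing $b_n = e_n = 0$ by non-negativity. For odd $n$, $n-1$ is even and $T_{2B,n-1} = -T_{4A,n-1}$; summing the trace-of-$g$ and trace-of-$g^2$ equations gives $-2\gamma_n - 2\beta_n = 0$, forcing $\gamma_n = \beta_n = 0$.

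With only $A$, $D$, $C^A$ remaining, the multiplication table in the proof of Theorem~\ref{thm:order-4-ring-homs} gives $D \otimes D = 4D$, $C^A \otimes C^A = A + 2D$, and $C^A \otimes D = 3D$, yielding the presentation $\bZ[d,c]/(d^2 - 4d,\, c^2 - 2d - 1,\, (c-3)d)$. Solving the defining relations over $\bC$: $d(d-4) = 0$ forces $d \in \{0,4\}$, and then $(c-3)d = 0$ together with $c^2 = 2d + 1$ leaves exactly the three solutions $(d, c) \in \{(4, 3), (0, 1), (0, -1)\}$, matching the rank, trace-of-$g$, and trace-of-$g^2$ homomorphisms respectively. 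Inverting the $3 \times 3$ system $a_n + 4d_n + 3c_n = T_{1A, n-1}$, $a_n + c_n = T_{4A, n-1}$, $a_n - c_n = T_{2B, n-1}$ then yields the generating-function matrix in the statement, after reordering the right-hand side to $(T_{1A}, T_{2B}, T_{4A})$.

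The main obstacle is essentially the parity identity $T_{2B,m} = (-1)^{m+1} T_{4A,m}$: once it is in hand, non-negativity of multiplicities forces the required vanishings and everything else is direct verification using the tables in Theorem~\ref{thm:order-4-ring-homs}. This identity is a coincidence specific to the 2B/4A pair, which is why the analogous class 6A result (Theorem~\ref{thm:6A}) remains incomplete.
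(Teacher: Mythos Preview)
Your proof is correct and follows essentially the same approach as the paper's: both use the parity relation between the coefficients of $T_{2B}$ and $T_{4A}$ to force the multiplicities of $B,E$ (for even $n$) and $C,C^B$ (for odd $n$) to vanish via non-negativity, then read off the ring structure and homomorphisms from the multiplication table of Theorem~\ref{thm:order-4-ring-homs}. The paper states the argument in two sentences (``the trace of $g$ equals [resp.\ is minus] the trace of $g^2$ on $V^\natural_n$ when $n$ is even [resp.\ odd], so $B,E$ [resp.\ $C,C^B$] cannot appear''), while you spell out the linear system and the $\eta$-quotient verification of the parity identity explicitly; but the underlying idea is identical.
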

\begin{proof}
The trace of $g$ is equal to the trace of $g^2$ on $V^\natural_n$ when $n$ is even, so $B$ and $E$ cannot appear.  The trace of $g$ is minus the trace of $g^2$ on $V^\natural_n$ when $n$ is odd, so $C$ and $C^B$ cannot appear.  This leaves the possibilities we listed, and the structure of the ring generated by these indecomposable modules is given by the tensor products listed in the proof of Theorem \ref{thm:order-4-ring-homs}.
\end{proof}

For the remaining classes, we do not have comprehensive multiplicity information, because there is at least one $T_\phi$ that we don't know how to evaluate.  However, we can put a bound on the ``order'' of quasi-replicability, by using periodicity of Adams operations.

\begin{prop}
Let $G$ be a cyclic subgroup of $\bM$ generated by an element of order 4.  Then, the Adams operations on the indecomposable $\bZ[G]$-modules appearing in $V^\natural$ are periodic with period at most 8.
\end{prop}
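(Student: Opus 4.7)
My plan is to verify the claim by explicit computation via Newton's recursion, guided by periodicity of characters. By Lemma \ref{lem:restriction-from-order-4-to-order-2}, the only indecomposable $\bZ[G]$-modules appearing in $V^\natural_\bZ$ lie in the list $\{A, B, C, D, E, C^A, C^B\}$, so it suffices to verify $\psi^{n+8} X = \psi^n X$ for each of these seven. The rank 1 cases require no work: $\psi^n A = A$ for all $n$, and $\psi^n B = B^{\otimes n}$, which is periodic of period 2 since $B^{\otimes 2} = A$.

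For the higher-rank modules I would first compute the exterior powers $\wedge^j X$ as explicit elements of $\Rep_\bZ(G)$. For the rank 2 modules $C$ and $E$, the only nontrivial exterior power is $\wedge^2$, and direct inspection of the $g$-action (where $g$ is a generator) shows $\wedge^2 C = A$ and $\wedge^2 E = B$, i.e., $\det g = +1$ on $C$ and $-1$ on $E$. Substituting into Newton's recursion, together with the multiplication table from the proof of Theorem \ref{thm:order-4-ring-homs}, produces explicit formulae for $\psi^n C$ and $\psi^n E$ that are already periodic of period $4$.

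The main obstacle is the remaining modules $D$, $C^A$, $C^B$ of ranks 4, 3, 3, whose exterior powers $\wedge^j$ have ranks up to $4$ and must be identified as integral classes, not merely as $\bQ$-characters. The rational decomposition is straightforward (for instance $D_\bQ = A_\bQ \oplus B_\bQ \oplus C_\bQ$ yields $\wedge^2 D$ of rational type $A + B + 2C$), but pinning down the $\bZ$-form requires more care. My approach is to use the constraint that every indecomposable summand of $\wedge^j X$ (for $X$ of one of our seven types) must itself lie among the nine classes of Theorem \ref{thm:order-4-ring-homs}, and to use the tensor products already tabulated there to narrow down the possibilities. Once each exterior power is identified, applying the recursion $\psi^n V = \sum_{j=1}^{n-1}(-1)^{j+1}(\psi^{n-j}V)(\wedge^j V) - (-1)^n n \wedge^n V$ for $n = 1,\ldots, 8$ expresses each $\psi^n X$ as an explicit $\bZ$-linear combination of indecomposables, after which the identity $\psi^{n+8}X = \psi^n X$ is verified by inspection.

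A conceptual sanity check for why the period must divide $8$ is the trace formula $\Tr(g|\psi^n X) = \Tr(g^n|X)$: since $g$ has order 4, the $\bQ$-character of $\psi^n X$ is already periodic of period 4, so $\psi^{n+4}X - \psi^n X$ lies in the torsion ideal of $\Rep_\bZ(G)$ by Theorem \ref{thm:can-invert-good-primes}. The doubling from $4$ to $8$ reflects the contribution of this torsion ideal, specifically its order on the indecomposable modules arising here, and is what the explicit Newton computation above is ultimately checking.
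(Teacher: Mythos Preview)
Your overall strategy---compute the exterior powers of each of the seven indecomposables, then run Newton's recursion and read off the periodicity---is exactly what the paper does. The paper presents the result as a table of $\Lambda^2,\Lambda^3,\Lambda^4$ and of $\psi^{2k+1},\psi^{4k+2},\psi^{8k+4},\psi^{8k}$ for each module, obtained in the same way.

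There is one substantive point where your sketch is weaker than the paper. Identifying the exterior powers integrally is the only nontrivial step, and the paper is explicit about how it is done: complex eigenvalues together with the $H=\langle g^2\rangle$-restriction pin down every case except $\Lambda^2 D$, and that last case is settled by a mod-$2$ similarity check (done in SAGE). Your proposed alternative---``use the tensor product table to narrow down the possibilities''---does not obviously work here. The rational type of $\Lambda^2 D$ is $A+B+2C$, and among rank-$6$ modules with that character and with $H$-restriction $2I+2\bZ[H]$ there are still two candidates, $C+D$ and $C^A+C^B$. Tensoring with a known module distinguishes them in the table (e.g.\ $(C+D)\otimes C^A=4D+E$ versus $(C^A+C^B)\otimes C^A=A+B+4D$), but to exploit this you would have to identify $\Lambda^2 D\otimes C^A$ integrally, which is the same kind of problem you are trying to solve. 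So either carry out an explicit matrix computation for $\Lambda^2 D$ (which immediately gives $C+D$), or adopt the paper's mod-$2$ check; the tensor-table argument as stated is circular.

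Your final ``sanity check'' paragraph is actually incorrect and should be dropped. For $G=\bZ/4\bZ$ the ring $\Rep_\bZ(G)$ is torsion-free: in the description of the torsion ideal in Theorem~\ref{thm:can-invert-good-primes}, the modules $M$ in the same genus as $\bZ[G]$ are all isomorphic to $\bZ[G]$ (the relevant class groups are trivial), so the ideal is zero. Hence ``$\psi^{n+4}X-\psi^n X$ lies in the torsion ideal'' would force period $4$, which is false: from the computation one finds $\psi^4 D=4E-4B$ and $\psi^8 D=4A$, and $4E-4B-4A$ is a nonzero (non-torsion) element of $\Rep_\bZ(G)$ with vanishing $\bQ$-character. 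The jump from period $4$ on characters to period $8$ integrally is a genuine feature of the integral representation ring, not a torsion phenomenon, and there is no shortcut around the explicit calculation.
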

\begin{proof}
We first compute the exterior powers of indecomposable modules: they can be distinguished for all cases except $\Lambda^2 D$, by considering eigenvalues of the corresponding complex representations and the decomposition under the action of $H = \langle g^2 \rangle$, and for the remaining case by similarity for matrices mod 2 (we checked this with a SAGE computation \cite{SAGE}).  A brief calculation with log derivatives yields the values and periodicity we want.

\vspace{2mm}

\begin{tabular}{c|ccccccc}
 & $A$ & $B$ & $C$ & $D$ & $E$ & $C^A$ & $C^B$  \\ \hline
$H$-rep. & $\bZ$ & $\bZ$ & $2I$ & $2\bZ[H]$ & $2\bZ$ & $\bZ[H] + I$ & $\bZ[H] + I$ \\
$\Lambda^2$ & $0$ & $0$ & $A$ & $C+D$ & $B$ & $C^A$ & $C^A$ \\
$\Lambda^3$ & $0$ & $0$ & $0$ & $D$ & $0$ & $A$ & $B$ \\
$\Lambda^4$ & $0$ & $0$ & $0$ & $B$ & $0$ & $0$ & $0$ \\
$\psi^{2k+1}$ & $A$ & $B$ & $C$ & $D$ & $E$ & $C^A$ & $C^B$ \\
$\psi^{4k+2}$ & $A$ & $A$ & $2E-2A$ & $2D-2C$ & $2E-2B$ & $\scriptstyle{A + 2D - 2C^A}$ & $\scriptstyle{A + 2D - 2C^B}$ \\
$\psi^{8k+4}$ & $A$ & $A$ & $2A$ & $4E-4B$ & $2A$ & $3A$ & $3A$ \\
$\psi^{8k}$ & $A$ & $A$ & $2A$ & $4A$ & $2A$ & $3A$ & $3A$ \\
\end{tabular}
\end{proof}

\begin{rem} \label{rem:adams-counterexample}
As we mentioned in Remark \ref{rem:adams-not-so-good}, the Adams operations are neither closed under composition, nor do they give us endomorphisms of representation rings.  From the table, we see that for composition, $\psi^2(\psi^2(C)) = \psi^2(2E-2A) = 4E-4B-2A$ while $\psi^4(C) = 2A$, and for multiplication, we have $\psi^2(C^2) = \psi^2(2E) = 4E-4B$ while $(\psi^2(C))^2 = (2E-2A)^2 = 4A$
\end{rem}

\begin{cor}
Let $G$ be a cyclic subgroup of $\bM$ generated by an element of order 4, and let $R$ be a subring of $\bC$.  Then, for any ring homomorphism $\Rep^\natural_R(G) \to \bC$, the series $T_\phi$ is quasi-replicable of exponent 2, and the ``quasi-replicate'' functions $T_\phi^{(k)}(\tau) = \sum_{n \geq -1} \phi(\psi^k(V^\natural_n))q^{n-1}$ are periodic in $k$ with period $8$.
\end{cor}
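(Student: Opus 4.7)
The plan is to deduce the corollary directly from the main theorem and the preceding Adams-operation calculation. First I would invoke Theorem \ref{thm:main} applied to the order-$4$ cyclic subgroup $G$: this yields quasi-replicability of $T_\phi$ with exponent $|G|=4$. To reduce the exponent from $4$ to $2$, I would simply note that a positive integer is coprime to $4$ if and only if it is coprime to $2$, so the vanishing condition in Definition \ref{defn:quasi-replicable} with $N=4$ is literally the same statement as with $N=2$, and no additional argument is needed.

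For the period-$8$ assertion on the quasi-replicates $T_\phi^{(k)}$, I would first work over $R=\bZ$. Lemma \ref{lem:restriction-from-order-4-to-order-2} restricts the possible indecomposable $\bZ[G]$-summands of any $V^\natural_{n,\bZ}$ to the seven modules $A, B, C, D, E, C^A, C^B$, and the preceding proposition's table of Adams operations shows directly that $\psi^{k+8}(X) = \psi^k(X)$ in $\Rep_\bZ(G)$ for every such $X$ and every $k \geq 1$. Since $\psi^k$ is additive over direct sums, choosing any decomposition of $V^\natural_{n,\bZ}$ into indecomposables gives the identity $\psi^{k+8}(V^\natural_{n,\bZ}) = \psi^k(V^\natural_{n,\bZ})$ in $\Rep_\bZ(G)$ modulo the torsion ideal, and the torsion ideal is annihilated by any ring homomorphism to $\bC$ by Corollary \ref{cor:homs-factor-through}; any potential ambiguity from the failure of Krull-Schmidt is thereby absorbed.

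For a general subring $R \subseteq \bC$, I would use that Adams operations commute with base change (as noted in the properties of Adams operations established earlier) and that $V^\natural_{n,R} = V^\natural_{n,\bZ} \otimes_\bZ R$, so the $\bZ$-periodicity transfers directly to $\Rep_R(G)$. Applying $\phi$ to each $\psi^k(V^\natural_{n,R})$ and summing against $q^{n-1}$ then yields $T_\phi^{(k+8)}(\tau) = T_\phi^{(k)}(\tau)$. The main ``obstacle'' here is really minimal: the serious content lies in the case-by-case computation of $\psi^k$ on the seven indecomposable modules in the preceding proposition (requiring the explicit exterior-power table together with Newton's recursion), which is already complete. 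Everything else is formal propagation through base change and through the homomorphism $\phi$.
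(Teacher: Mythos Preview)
Your proposal is correct and follows essentially the same approach as the paper. The paper's proof is a single sentence invoking the 8-periodicity of Adams operations from the preceding proposition together with the definition of $T_\phi^{(k)}$; you have simply spelled out the details (the reduction from exponent $4$ to exponent $2$ via the trivial observation that coprimality to $4$ and to $2$ coincide, the handling of Krull--Schmidt ambiguity through the torsion ideal, and the passage to general $R$ by base change), none of which the paper makes explicit but all of which are implicit in its one-line argument.
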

\begin{proof}
This follows immediately from the 8-periodicity of Adams operations and the definition of $T_\phi^{(k)}$.
\end{proof}

Our best remaining case is 4B, with at most 1 extra function:

\begin{prop}
Let $G$ be a cyclic subgroup of $\bM$ generated by an element in class 4B.  Then, $V^\natural_\bZ$ decomposes as a direct sum of the indecomposable $\bZ[G]$-modules $A$, $D$, $E$, and possibly $B$.  $\Rep^\natural_\bZ(G)$ is either (if $B$ is not present) $\bZ[d,e]/((d-4)d, (e-2)d, (e-2)e)$ or (if $B$ is present) $\bZ[b,d,e]/(b^2 - a, (b-1)d, (d-4)d, (b-1)e, (e-2)d, (e-2)e)$.  If $B$ is not present, then there are 3 homomorphisms from this ring to $\bC$, corresponding to the traces of elements in classes 1A, 2A, and 4B.  If $B$ is present, then there are 4 homomorphisms to $\bC$, given by the 3 traces together with a twisted version of trace taking $(A,B,D,E)$ to $(1,1,0,0)$.  For this last map $\phi$, the coefficients of $T_\phi$ are bounded above and below by the corresponding coefficients of $T_{2A}(\tau)$ and $T_{4B}(\tau)$.  Specifically, if we denote the multiplicities of the $\bZ[G]$-modules $A,B,D,E$ in $V^\natural_{n,\bZ}$ by $a_n, b_n, d_n, e_n$, we have the following relations between these numbers and the $q^{n-1}$ coefficients of generalized McKay-Thompson series:
\begin{enumerate}
\item $T_{1A}(\tau) = \sum_n (a_n + b_n + 4d_n + 2e_n)q^{n-1} = q^{-1} + 0 + 196884q + 21493760q^2 + 864299970q^3 + \cdots$
\item $T_{2A}(\tau) = \sum_n (a_n + b_n + 2e_n)q^{n-1} = q^{-1} + 0 + 4372q + 96256q^2 + 1240002q^3 + \cdots$
\item $T_{4B}(\tau) = \sum_n (a_n - b_n)q^{n-1} = q^{-1} + 0 + 52q + 0q^2 + 834q^3 + \cdots$
\item $T_\phi(\tau) = \sum_n (a_n + b_n)q^{n-1} = q^{-1} + 0 + (a_2+b_2)q + (a_3 + b_3)q^2 + \cdots$
\end{enumerate}
In particular, the multiplicity of $D$ in $V^\natural_{n,\bZ}$ is $d_n$, and this is given by the $q^{n-1}$ coefficient of $\frac{T_{1A}(\tau) - T_{2A}(\tau)}{4} = 48128q + 5349376q^2 + 215764992q^3 + \cdots$.
\end{prop}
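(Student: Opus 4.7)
The plan is to exploit Lemma~\ref{lem:restriction-from-order-4-to-order-2} together with the tensor product table in the proof of Theorem~\ref{thm:order-4-ring-homs} to pin down the ring $\Rep^\natural_\bZ(G)$ and classify its ring homomorphisms to $\bC$, then match the resulting homomorphisms with McKay-Thompson series by direct character computation.

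First I would invoke Lemma~\ref{lem:restriction-from-order-4-to-order-2}, which asserts that only $A, B, D, E$ can occur as indecomposable summands of $V^\natural_{n,\bZ}$ in class 4B (since $g^2 \in 2\text{A}$ and the class 2A decomposition of \cite{BR96} yields only $\bZ$ and $\bZ[H]$, eliminating every indecomposable whose restriction to $\langle g^2\rangle$ involves $I$). Inspecting the multiplication table in the proof of Theorem~\ref{thm:order-4-ring-homs}, the identities $D\otimes D = 4D$, $E\otimes E = 2E$, $D\otimes E = 2D$ show that $\bZ\langle A, D, E\rangle$ is closed under tensor product, hence isomorphic to $\bZ[d,e]/((d-4)d,(e-2)d,(e-2)e)$; including $B$ and using $B\otimes B = A$, $B\otimes D = D$, $B\otimes E = E$ yields the larger ring $\bZ[b,d,e]/(b^2-a,(b-1)d,(d-4)d,(b-1)e,(e-2)d,(e-2)e)$.

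Next I would classify the ring homomorphisms to $\bC$. The relation $b^2 = 1$ forces $b = \pm 1$; if $b = -1$ then $(b-1)d = 0$ gives $d = 0$ and similarly $e = 0$, so there is a unique homomorphism $(1,-1,0,0)$, which is the trace of $g$ and matches $T_{4B}$ by direct character inspection on $(A,B,D,E)$. If $b = 1$, the remaining constraints $(d-4)d = (e-2)d = (e-2)e = 0$ give exactly three homomorphisms, corresponding to $(d,e) \in \{(4,2),(0,2),(0,0)\}$, and I would identify them with the rank (i.e.\ $T_{1A}$), the trace of $g^2$ (i.e.\ $T_{2A}$), and the remaining ``twisted'' homomorphism $(1,1,0,0) = T_\phi$. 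This gives three homomorphisms when $B$ is absent and four when $B$ is present. Substituting the multiplicities $(a_n, b_n, d_n, e_n)$ into each homomorphism then produces the four $q$-series identities in the statement.

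Finally, the bounds on $T_\phi$ follow by subtraction: $T_{2A} - T_\phi = 2\sum_n e_n q^{n-1}$ and $T_\phi - T_{4B} = 2\sum_n b_n q^{n-1}$, both of which have non-negative coefficients since $e_n, b_n$ are multiplicities of indecomposable summands; this sandwiches $T_\phi$ between $T_{4B}$ and $T_{2A}$ coefficient by coefficient. The multiplicity formula $d_n = \tfrac{1}{4}(T_{1A} - T_{2A})_{n-1}$ is immediate from $T_{1A} - T_{2A} = 4\sum_n d_n q^{n-1}$. The main obstacle is that we cannot decide \emph{a priori} whether $B$ is actually a summand of some $V^\natural_{n,\bZ}$; this ambiguity is intrinsic to the tools available here, which is why the statement allows both cases and why we obtain only a bound, rather than an explicit evaluation, for the fourth homomorphism $T_\phi$.
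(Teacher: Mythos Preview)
Your proof is correct and follows essentially the same approach as the paper: the paper's own proof simply cites Lemma~\ref{lem:restriction-from-order-4-to-order-2} for the list of indecomposable summands and the multiplication table in the proof of Theorem~\ref{thm:order-4-ring-homs} for the tensor structure and homomorphisms, while you have spelled out the case analysis for the homomorphisms and the coefficient bounds explicitly. The only minor difference is that you classify the homomorphisms directly from the relations on the subring generated by $A,B,D,E$, whereas the paper implicitly relies on restricting the full classification in Theorem~\ref{thm:order-4-ring-homs} to that subring; both yield the same four maps.
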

\begin{proof}
The indecomposable modules are given in Lemma \ref{lem:restriction-from-order-4-to-order-2}, and the tensor products and homomorphisms are listed in the proof of Theorem \ref{thm:order-4-ring-homs}.
\end{proof}

We note that the last claim in this proposition implies $\bZ[G]$ makes up a large proportion of the indecomposable summands in $V^\natural_n$, and this proportion approaches 1 as $n \to \infty$.

Let us consider further what we can know about this extra function $T_\phi$.  Quasi-replicability gives us an identity for each $p^{2i}q^{2j+1}$, where $0 < i < j$.  For example, vanishing of $(\bigwedge^* \fn_{\bZ_2})_{1,2k+1}$ (equivalently, vanishing of the $p^2 q^{2k+1}$ term in the product) yields $V^\natural_{4k+3,\bZ_2} = V^\natural_{2k+3,\bZ_2} \oplus \bigoplus_{i=1}^k V^\natural_{i+1,\bZ_2} \otimes V^\natural_{2k+2-i,\bZ_2}$, and applying $\phi$ yields $a_{4k+3} + b_{4k+3} = a_{2k+3} + b_{2k+3} + \sum_{i=1}^k (a_{i+1} + b_{i+1})(a_{2k+2-i} + b_{2k+2-i})$.  We my use this to establish some asymptotic behavior.

\begin{prop}
For even $n$, we have $c_1 n^{-3/4} e^{\pi \sqrt{2n}} < a_n < c_2 n^{-3/4} e^{\pi \sqrt{8n}}$ for some strictly positive constants $c_1,c_2$.  Furthermore, either $a_n = 0$ for all odd $n$, or for any $\epsilon > 0$, there is some $N$ such that for all $n \equiv 3 \pmod 4$ satisfying $n>N$, we have $a_n > e^{\pi\sqrt{2n}(1-\epsilon)}$.
\end{prop}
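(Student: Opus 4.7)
The plan splits into two halves: the even-$n$ bounds come essentially for free from non-negativity of the multiplicities combined with Rademacher-type asymptotics for the controlling Hauptmoduln, while the odd-$n$ dichotomy is a bootstrap exploiting the quasi-replicability recursion derived in the discussion preceding the proposition.

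For even $n$, non-negativity of $b_n$ and $e_n$ gives $a_n \leq a_n + b_n + 2e_n = [q^{n-1}]T_{2A}$, and $a_n \geq a_n - b_n = [q^{n-1}]T_{4B}$, the latter being strictly positive precisely when $n$ is even. The standard Rademacher expansion applied to the Hauptmodul $T_{2A}$ on $\Gamma_0(2){+}$ yields $[q^{n-1}]T_{2A} \sim c_2' \, n^{-3/4} \, e^{\pi\sqrt{8n}}$, and the analogous expansion for $T_{4B}$ on $\Gamma_0(4)$ yields $[q^{n-1}]T_{4B} \sim c_1' \, n^{-3/4} \, e^{\pi\sqrt{2n}}$ for even $n$. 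Both are derivable from Rademacher's exact formula applied to the explicit eta-product expressions for $T_{2A}$ and $T_{4B}$, or cited from the asymptotic analysis of McKay--Thompson series.

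For odd $n$, set $u_m := a_m + b_m = [q^{m-1}]T_\phi \geq 0$; since $a_m - b_m = [q^{m-1}]T_{4B} = 0$ for odd $m$, we have $u_m = 2 a_m$ in that case. The $p^2 q^{2k+1}$ instance of quasi-replicability reads
\[
u_{4k+3} \;=\; u_{2k+3} + \sum_{i=1}^{k} u_{i+1}\, u_{2k+2-i},
\]
with each $u_m \geq 0$. If $u_m = 0$ for every odd $m$ we are in the first alternative; otherwise, fix the smallest odd $n_0$ with $u_{n_0} > 0$ and bootstrap. Define $\alpha_1 = 1$ and $\alpha_{j+1} = (\alpha_j + 2)/2$, a sequence converging monotonically to $2$, and prove by induction that $u_n \gtrsim n^{-3/4} e^{\pi\sqrt{\alpha_j n}}$ for all sufficiently large $n \equiv 3 \pmod 4$. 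The base case takes $i = n_0 - 1$ (even, since $n_0$ is odd) in the recursion, giving $u_{4k+3} \geq u_{n_0}\, u_{2k+3-n_0}$; since $2k+3-n_0$ is even, the even-$n$ bound delivers $\alpha_1 = 1$. The inductive step takes $i = 4\ell + 2$ so that $u_{i+1} = u_{4\ell+3}$ lies under the inductive hypothesis while $u_{2k+2-i} = u_{2k-4\ell}$ lies under the even-$n$ bound. Writing $y = (i+1)/n \in (0, 1/4]$ with $n = 4k+3$, the exponent reduces asymptotically to $\pi\sqrt{n}(\sqrt{\alpha y} + \sqrt{1-2y})$; a Lagrange-multiplier computation shows this is maximized at $y = \alpha/(2\alpha+4)$ with value $\pi\sqrt{n}\sqrt{(\alpha+2)/2}$, giving the next exponent $\alpha' = (\alpha+2)/2$. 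Since $\alpha_j \to 2$, choosing $j$ so that $\alpha_j > 2(1-\epsilon)^2$ produces $a_n = u_n/2 > e^{\pi\sqrt{2n}(1-\epsilon)}$ for $n \equiv 3 \pmod 4$ beyond a threshold $N$.

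The main obstacle is the quantitative bookkeeping in the iterative step: the polynomial prefactors $n^{-3/4}$ degrade slightly at each round and must be absorbed into the $\epsilon$-slack of the final exponential bound, and the optimal $\ell$ must be realized (up to $O(1)$) as a nonnegative integer with $4\ell+2 \leq k$. A secondary but important point is securing a Rademacher lower bound for the odd $q$-coefficients of $T_{4B}$ with a genuinely positive constant; this follows from Rademacher's exact formula applied to the eta-product for $T_{4B}$, or from the literature on asymptotic analysis of monstrous moonshine series, but does require some attention to detail.
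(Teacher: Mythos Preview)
Your proof is correct and follows essentially the same strategy as the paper's: the even-$n$ bounds come from sandwiching $a_n$ between the coefficients of $T_{4B}$ and $T_{2A}$ and invoking Rademacher-type asymptotics (the paper cites \cite{DGO15} Theorem 8.11 for this), while the odd-$n$ dichotomy is obtained by bootstrapping through the $p^2 q^{2k+1}$ recursion. The only notable difference is one of packaging: you set up the explicit sequence $\alpha_{j+1} = (\alpha_j+2)/2$ converging to $2$ and optimize the index $i$ at each stage via a Lagrange-multiplier computation, whereas the paper fixes $i \approx k$ throughout (which is the limiting optimum as $\alpha_j \to 2$) and phrases the iteration as $\epsilon' \mapsto \epsilon'' < \tfrac{2}{3}\epsilon'$; both yield the same conclusion, and your version makes the base case and the absorption of the polynomial prefactors somewhat more transparent.
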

\begin{proof}
Theorem 8.11 of \cite{DGO15} gives an exact formula for the coefficients of McKay-Thompson series, and we find that the $q^{2k+1}$-st coefficient of $T_{4B}$ grows like $2^{-1/4}(2k+1)^{-3/4} e^{\pi \sqrt{2(2k+1)}}$, and the corresponding coefficient of $T_{2A}$ grows like $2^{-3/4}(2k+1)^{-3/4}e^{\pi\sqrt{8(2k+1)}}$.  This latter figure gives us our upper bound for all $k$, and our lower bound for $a_{2k+2}$.  The recursion formula for $\phi$ implies $a_{4k+3} \geq a_{2k+3} + \sum_{i=1}^k a_{i+1} a_{2k+2-i}$, so $a_{4k+3}$ is strictly positive for all $k \geq \frac{n-3}{2}$.  Now, let $\epsilon' > 0$ satisfy $a_{i+1} > e^{\pi\sqrt{2i}(1-\epsilon')}$ for the unique $i \equiv 2 \pmod{4}$ satisfying $i \in \{k-1, k, k+1, k+2\}$.  Then, $2k+2-i$ is even, so $a_{2k+2-i} > 2^{-1/2} k^{-3/4} e^{\pi\sqrt{2k}}$.  Applying the recursion formula, we find that $a_{4k+3} > 2^{-1/2} k^{-3/4}e^{\pi(\sqrt{2k}+ \sqrt{2k-2} - \epsilon\sqrt{2k+4})}$.  For $k$ sufficiently large, this is greater than $e^{\pi\sqrt{2(4k+3)}(1-\epsilon'')}$ for $\epsilon'' < \frac{2 \epsilon'}{3}$.  Iterating the substitution $k \mapsto 4k+3$, $\epsilon' \mapsto \epsilon''$, we eventually find $\epsilon'' < \epsilon$.
\end{proof}

The classes 4C and 4D are similar but have more potential classes and functions.  The analysis for elements of orders 9 and 25 is also similar, but there are substantially more indecomposable modules to consider, and the computation of Adams operations seems to require a lot of computer memory.

\subsection{Cyclic subgroups of order 6}

We begin by reviewing the results of \cite{K62}, which gives an algorithm for classifying indecomposable representations of cyclic groups of square-free order.  Let $p,q$ be distinct primes.  The irreducible representations of $\bZ/pq\bZ$ over $\bZ$ are represented by matrices of the form $\Gamma_1, \Gamma_p, \Gamma_q, \Gamma_{pq}$, where each $\Gamma_r$ has size $\phi(r)$ and equivalence classes are parametrized by ideal classes in $\bZ[\zeta_r] = \bZ[e^{2\pi i/r}]$.  Indecomposable representations of $\bZ/pq\bZ$ over $\bZ$ are represented by block upper triangular matrices, whose diagonal blocks have at most one copy of each type $\Gamma_r$, where $r$ is weakly increasing as we progress down the diagonal.  Two indecomposable representations of this form are equivalent if and only if one can be taken to the other by a sequence of ``sigma'' transformations (conjugation by strictly block upper triangular matrices) and ``delta'' transformations (conjugation by block diagonal matrices).

For any representing block-upper triangular matrix $W$, we form a decorated graph whose vertices are labeled with the values of $r$ attached to the diagonal blocks, where $r < r'$ are connected by an edge if and only if there is a nonzero entry in the block that shares rows with $\Gamma_r$ and columns with $\Gamma_{r'}$.  
A representation is indecomposable if and only if all matrices in the equivalence class have connected graphs.  Within each equivalence class of indecomposable representations, there is a representative with minimal graph, and in this graph $r$ is adjacent to $r'$ only if $r'/r$ is a prime.

\begin{lem} \label{lem:graphs-determine-reps}
If 2 indecomposable $\bZ[\bZ/6\bZ]$-modules yield identical minimal graphs, then they are equivalent.
\end{lem}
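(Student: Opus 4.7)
The plan is to invoke Krugljak's classification algorithm from \cite{K62} directly and verify that in the case of $\bZ/6\bZ$, the minimal graph carries all the information needed to specify the equivalence class. Recall that an indecomposable $\bZ[\bZ/6\bZ]$-module has a representative in block upper triangular form, with diagonal blocks drawn from $\{\Gamma_1,\Gamma_2,\Gamma_3,\Gamma_6\}$ (each appearing at most once, listed in increasing order of $r$), with equivalence generated by sigma transformations (strictly block upper triangular conjugations) and delta transformations (block diagonal conjugations). Two representatives with identical minimal graphs have, by construction, the same multiset of diagonal block types and the same pattern of nonzero off-diagonal blocks, and the edges of the minimal graph connect only $r$ to $r'$ with $r'/r$ prime.

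First, I would use delta transformations to place each diagonal $\Gamma_r$-block into a canonical standard form. The equivalence classes of $\Gamma_r$ as a $\bZ[\zeta_r]$-lattice are indexed by the ideal class group of $\bZ[\zeta_r]$, but for $r \in \{1,2,3,6\}$ the relevant rings are $\bZ$ and $\bZ[\zeta_3]=\bZ[\zeta_6]$, each of which is a PID. Hence, independent of the starting indecomposable, the diagonal blocks can be normalized to a form depending only on the multiset of types recorded by the minimal graph.

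Next, I would normalize each off-diagonal block by the combined action of the remaining delta transformations (which, having fixed the diagonal standard forms, are now restricted to act by units of the corresponding cyclotomic rings) together with sigma transformations. For each edge $(r,r')$ in the minimal graph, the off-diagonal block lives in a free rank-one module over the appropriate cyclotomic ring, and the unit-group action is transitive on nonzero elements (again using that the class groups are trivial and that the unit groups of $\bZ$, $\bZ[\zeta_3]$, $\bZ[\zeta_6]$ surject onto these orbits). Since the minimal graph specifies exactly which blocks are nonzero and which vanish, and since each nonzero block admits a unique normalized representative, this produces a canonical form determined entirely by the graph data.

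The main obstacle is simultaneously normalizing several off-diagonal blocks without disturbing one another. A delta transformation that adjusts one edge's block can in principle perturb another edge's block by a unit factor, and a sigma transformation used to clear an entry can introduce spurious entries on further edges. The resolution, which is essentially contained in Krugljak's argument, is to process edges in order of increasing source type $r$ (from $\Gamma_1$ outward), absorbing any residual unit-scaling freedom from earlier edges into the subsequent delta/sigma actions on later edges; the constraint that edges in the minimal graph only connect $r$ to $r'$ with $r'/r \in \{2,3\}$ ensures this ordering is consistent, and the PID property keeps all obstructions trivial. Verifying this step-by-step decoupling, by enumerating the possible minimal graphs on the vertex set $\{1,2,3,6\}$ with the allowed edges, is the technical core of the proof.
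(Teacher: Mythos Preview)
Your overall strategy coincides with the paper's: both proofs note that the relevant cyclotomic rings $\bZ,\bZ[\zeta_3],\bZ[\zeta_6]$ are PIDs (so diagonal blocks have no class-group ambiguity) and then reduce to a finite verification over all admissible minimal graphs on $\{1,2,3,6\}$. The paper simply states that this verification was carried out in SAGE, whereas you attempt to supply a conceptual normalization argument before the enumeration.

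That conceptual layer has an imprecision worth flagging. Your assertion that each off-diagonal block ``lives in a free rank-one module over the appropriate cyclotomic ring, and the unit-group action is transitive on nonzero elements'' is not literally correct. Already for the edge $1\text{--}2$ the block is an integer $a$, and delta transformations act only by $a\mapsto \pm a$; it is the sigma action $a\mapsto a+2s$ that collapses the nonzero classes to a single orbit in $\bZ/2\bZ$. For the edge $3\text{--}6$ the off-diagonal block is a $2\times 2$ integer matrix, and one must use the bimodule structure and the constraint coming from $W^6=I$ to see that the effective extension data is small. So the ``unit transitivity'' slogan does not carry the weight you put on it; what actually happens is that the combined sigma/delta action cuts the extension data down to a group of prime order on each admissible edge, and this is exactly what the case-by-case check (your step 4, the paper's SAGE computation) establishes. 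Since you do end by deferring to that enumeration, there is no fatal gap, but the intermediate justification should be rephrased accordingly.

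One small correction: the reference \cite{K62} is Knee's MIT dissertation, not Krugljak.
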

\begin{proof}
Because we are concerned with the case $p=2, q=3$, the relevant ideal class groups are trivial.  A brief computation in SAGE shows that for each decorated connected graph with vertex labels taken from a subset of $\{1,2,3,6\}$, there is a unique orbit under the action of sigma and delta transformations, such that this graph is the minimal graph of the equivalence class \cite{SAGE}.
\end{proof}

We write $\Gamma(r_1,r_2,\ldots)$ to denote an indecomposable representation whose block diagonals are $\Gamma_{r_1}, \Gamma_{r_2},\ldots$.  We note that in general, this notation may not determine the representation up to equivalence.

\begin{lem} \label{lem:classification-of-6A-indecomposables}
Let $\rho: \bZ/6\bZ \to \Aut V^\natural_\bZ$ be a representation taking a generator to an element in class 6A.  Then, any indecomposable submodule has one of the following forms: $\Gamma_1$, $\Gamma(1,2)$, $\Gamma(1,3)$, $\Gamma(1,2,3)$, and $\Gamma(1,2,3,6)$.  Furthermore, the only submodule of the form $\Gamma(1,2,3,6)$ is isomorphic to the group ring $\bZ[\bZ/6\bZ]$, and its decorated graph is a 4-cycle $1-2-6-3-1$.
\end{lem}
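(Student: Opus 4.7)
The strategy is to exploit the conjugacy data that for $g \in 6A$, we have $g^3 \in 2A$ and $g^2 \in 3A$, both Fricke elements of prime order, and then combine the resulting restriction constraints with Lemma~\ref{lem:graphs-determine-reps}. By the Borcherds-Ryba theorems for Fricke involutions (cited in Lemma~\ref{lem:restriction-from-order-4-to-order-2}) and the analogous $p=3$ results in \cite{BR96,B98}, the restriction of $V^\natural_\bZ$ to $\langle g^3\rangle$ decomposes only into copies of $\bZ$ and $\bZ[\langle g^3\rangle]$ (no sign representation $I$), and the restriction to $\langle g^2\rangle$ decomposes only into copies of $\bZ$ and $\bZ[\langle g^2\rangle]$ (no $\bZ[\zeta_3]$). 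Hence every indecomposable $\bZ[\bZ/6]$-direct summand $M$ of $V^\natural_\bZ$ satisfies: $M|_{\langle g^3\rangle}$ has no $I$ summand and $M|_{\langle g^2\rangle}$ has no $\bZ[\zeta_3]$ summand.

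By Lemma~\ref{lem:graphs-determine-reps}, each such $M$ corresponds to a unique connected decorated graph with vertex labels in $\{1,2,3,6\}$ and edges between $r,r'$ only when $r'/r$ is prime. One computes the trace of $g^3$ on $\Gamma_r$ to be $+1,-1,+2,-2$ for $r=1,2,3,6$, and the trace of $g^2$ to be $+1,+1,-1,-1$. Since $\bZ$-summands contribute $+1$ and regular representations contribute $0$ to each trace, the restriction constraints force both traces on $M$ to be non-negative. Enumerating the fifteen nonempty subsets of $\{1,2,3,6\}$ and discarding those with a negative trace or no connected minimal graph leaves only the vertex sets $\{1\}$, $\{1,2\}$, $\{1,3\}$, $\{1,2,3\}$, and $\{1,2,3,6\}$. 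For the first four, the allowed edges determine a unique minimal connected graph, giving $\Gamma_1$, $\Gamma(1,2)$, $\Gamma(1,3)$, $\Gamma(1,2,3)$ respectively.

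For the vertex set $\{1,2,3,6\}$, the allowed edges $1-2,\,1-3,\,2-6,\,3-6$ give five connected minimal graphs: the 4-cycle and four spanning trees obtained by deleting a single edge. I will eliminate each spanning tree by a direct block computation. Writing $g$ as a block upper triangular matrix with diagonal blocks $1,-1,A_3,A_6$ (satisfying $I+A_3+A_3^2=0$ and $I-A_6+A_6^2=0$) and off-diagonal blocks corresponding to the edges present, one computes that the $(2,4)$ block of $g^3$ vanishes identically (from the minimal polynomial identity for $A_6$), and similarly the $(1,3)$ block of $g^3$ vanishes (from the identity for $A_3$). Hence if the edge $1-2$ is missing the $\Gamma_2$ block becomes isolated in $g^3$, producing an $I$ summand in $M|_{\langle g^3\rangle}$; if $3-6$ is missing, the $\Gamma_6$ block becomes isolated from any $+1$-eigenspace coupling, again producing $I$ summands. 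By the symmetric computation with $g^2$, a missing edge $1-3$ or $2-6$ forces a $\bZ[\zeta_3]$ summand in $M|_{\langle g^2\rangle}$. The unique graph with all four helpful edges present is the 4-cycle.

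Finally, to identify the 4-cycle indecomposable with $\bZ[\bZ/6]$: the group ring has rank $6$, composition factors $\Gamma_1\oplus\Gamma_2\oplus\Gamma_3\oplus\Gamma_6$ over $\bQ$, is free of rank $3$ over $\bZ[\langle g^3\rangle]$ (restricting as $3\bZ[\bZ/2]$) and free of rank $2$ over $\bZ[\langle g^2\rangle]$ (restricting as $2\bZ[\bZ/3]$); a short check that no nontrivial integer combination of the primitive central idempotents of $\bQ[\bZ/6]$ has all integer coefficients shows $\bZ[\bZ/6]$ is indecomposable as a module over itself. Its vertex set and restriction profile agree with those of the 4-cycle indecomposable, so Lemma~\ref{lem:graphs-determine-reps} yields the claimed isomorphism. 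The main technical obstacle is the spanning-tree computation of paragraph three: all four spanning trees satisfy the trace conditions with equality, so one cannot avoid tracking how the cyclotomic minimal polynomials cancel specific off-diagonal blocks of $g^2$ and $g^3$. Once one recognizes that the $g^3$-helpful edges are exactly $\{1-2,3-6\}$ and the $g^2$-helpful edges are exactly $\{1-3,2-6\}$, the 4-cycle emerges as the unique configuration that satisfies both constraints simultaneously.
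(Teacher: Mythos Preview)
Your approach matches the paper's: both exploit that $g^3 \in 2\mathrm{A}$ and $g^2 \in 3\mathrm{A}$ are Fricke, so by Modular Moonshine the restrictions to $\langle g^3\rangle$ and $\langle g^2\rangle$ avoid the augmentation-ideal summands, and then both determine which decorated graphs survive these constraints. The paper phrases this purely combinatorially---restriction to order $2$ erases the $1$--$3$ and $2$--$6$ edges, and one discards graphs with isolated $2$ or $6$; symmetrically for order $3$---while you detour through trace non-negativity to cut down the vertex sets before handling $\{1,2,3,6\}$ by block computation. Both routes reach the same five types and the same $4$-cycle identification with $\bZ[\bZ/6\bZ]$.

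One step needs tightening. In your spanning-tree analysis, the case ``edge $3$--$6$ missing'' does \emph{not} leave $\Gamma_6$ isolated in the block form of $g^3$: with edges $1$--$2$, $1$--$3$, $2$--$6$ present, the $(1,4)$ block of $g^3$ picks up the nonzero contribution $B_{12}B_{24}A_6$ from the path $1\to 2\to 6$, so $\Gamma_6$ remains coupled to the $+1$-eigenspace $\Gamma_1$. Your conclusion that $I$ summands appear is still correct---$\Gamma_1$ contributes only one $+1$-dimension and is already paired with $\Gamma_2$ via the surviving $1$--$2$ edge, so the two $-1$-dimensions of $\Gamma_6$ cannot both be absorbed into copies of $\bZ[\bZ/2\bZ]$---but the ``isolated'' justification fails as written. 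The paper's edge-erasure language sidesteps this by working at the level of the decorated graph rather than the literal block matrix of $g^3$, implicitly using that the extensions along $1$--$3$ and $2$--$6$ split over $\bZ[\bZ/2\bZ]$ (since $\Ext^1_{\bZ[\bZ/2\bZ]}(\bZ,\bZ) = \Ext^1_{\bZ[\bZ/2\bZ]}(I,I) = 0$), so that connected components of the erased graph govern the $\bZ[\bZ/2\bZ]$-decomposition regardless of higher-order cross terms.
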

\begin{proof}
The nontrivial powers of 6A elements lie in classes 2A and 3A, so Modular Moonshine for these classes \cite{BR96}, \cite{C17} implies the restriction to subgroups of orders 2 and 3 have no indecomposable constituents equivalent to the augmentation ideals.  Restriction to a subgroup of order 2 erases the $1-3$ and $2-6$ edges, and we eliminate those graphs that have isolated $2$ or $6$.  This eliminates $\Gamma_2, \Gamma_6, \Gamma(2,6), \Gamma(1,2,6), \Gamma(2,3,6)$.  Restriction to a subgroup of order 3 erases the $1-2$ and $3-6$ edges, and we eliminate those graphs that have isolated $3$ or $6$.  This eliminates $\Gamma_3, \Gamma_6, \Gamma(3,6), \Gamma(1,3,6), \Gamma(2,3,6)$.  We are left with the graph types we claimed.  In particular, $\Gamma(1,2,3,6)$ must be given by a cycle, and it is straightforward to see that the group ring gives suitable connectivity.
\end{proof}

\begin{lem} \label{lem:tensor-products-for-6A}
We have the following tensor products of indecomposable representations appearing in Lemma \ref{lem:classification-of-6A-indecomposables} (abbreviating $\Gamma(1,2,3,6)$ as $D$): 

\vspace{2mm}

\begin{tabular}{c|c|c|c|c|c}
& $\Gamma_1$ & $\Gamma(1,2)$ & $\Gamma(1,3)$ & $\Gamma(1,2,3)$ & $D$ \\ \hline
$\Gamma_1$ & $\Gamma_1$ & $\Gamma(1,2)$ & $\Gamma(1,3)$ & $\Gamma(1,2,3)$ & $D$ \\
$\Gamma(1,2)$ & & $2\Gamma(1,2)$ & $D$ & $D + \Gamma(1,2)$ & $2D$ \\
$\Gamma(1,3)$ & & & $3\Gamma(1,3)$ & $D + 2\Gamma(1,3)$ & $3D$ \\
$\Gamma(1,2,3)$ & & & & $2D + \Gamma(1,3) + \Gamma_1$ & $4D$ \\
$D$ & & & & & $6D$ 
\end{tabular}
\end{lem}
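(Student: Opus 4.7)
The plan is to verify the table entry by entry, working outward from general principles to the hardest self-tensor, and relying on Lemma \ref{lem:graphs-determine-reps} to identify indecomposable summands from their minimal decorated graphs. The easy entries are the $\Gamma_1$ row and column (trivially the identity), the $D$ row and column (using the standard isomorphism $D \otimes_\bZ M \cong D^{\oplus \operatorname{rank}_\bZ M}$ obtained from $d \otimes m \mapsto d \otimes d^{-1} m$ for any $\bZ$-torsion-free $\bZ[G]$-module $M$ of finite rank), and the diagonals $\Gamma(1,2)^{\otimes 2} = 2\Gamma(1,2)$ and $\Gamma(1,3)^{\otimes 2} = 3\Gamma(1,3)$ (obtained by pulling back the corresponding group-ring identities along the quotients $\bZ/6\bZ \twoheadrightarrow \bZ/p\bZ$ for $p = 2, 3$). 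The off-diagonal $\Gamma(1,2) \otimes \Gamma(1,3) = D$ follows from the Chinese remainder splitting $\bZ/6\bZ \cong \bZ/2\bZ \times \bZ/3\bZ$, under which the two pullbacks are identified with $\bZ[\bZ/2\bZ]$ and $\bZ[\bZ/3\bZ]$, whose external tensor product is $\bZ[\bZ/6\bZ] = D$.

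For the three mixed entries pairing $\Gamma(1,2,3)$ with the permutation modules $\Gamma(1,2)$ and $\Gamma(1,3)$, I use the standard identification $M \otimes_\bZ \bZ[G/H] \cong \operatorname{Ind}_H^G(M|_H)$ applied to $\Gamma(1,2) = \bZ[G/H_3]$ and $\Gamma(1,3) = \bZ[G/H_2]$, where $H_p$ denotes the order-$p$ subgroup. The input required is the restriction of $\Gamma(1,2,3)$ to $H_2$ and $H_3$. By the Modular Moonshine argument already used in Lemma \ref{lem:classification-of-6A-indecomposables}, these restrictions contain no augmentation ideals; combining this with $\bZ$-rank and the trace of the generator acting on the rational decomposition forces $\Gamma(1,2,3)|_{H_2} = \bZ[H_2] \oplus 2\bZ$ and $\Gamma(1,2,3)|_{H_3} = \bZ[H_3] \oplus \bZ$. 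Inducing these up then yields $\Gamma(1,2,3) \otimes \Gamma(1,2) = D + \Gamma(1,2)$ and $\Gamma(1,2,3) \otimes \Gamma(1,3) = D + 2\Gamma(1,3)$.

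The self-tensor $\Gamma(1,2,3)^{\otimes 2}$ is the hardest entry. My plan is to first establish the $\bZ[G]$-module isomorphism $\Gamma(1,2) \oplus \Gamma(1,3) \cong \Gamma_1 \oplus \Gamma(1,2,3)$; tensoring this identity by $\Gamma(1,2,3)$ and substituting the previously computed products then yields $\Gamma(1,2,3)^{\otimes 2} = 2D + \Gamma(1,3) + \Gamma_1$ directly. To produce the isomorphism I consider the four-term exact sequence
\[
0 \to L_6 \to D \xrightarrow{\pi_2 \oplus \pi_3} \Gamma(1,2) \oplus \Gamma(1,3) \xrightarrow{\operatorname{aug}_2 - \operatorname{aug}_3} \Gamma_1 \to 0,
\]
where the $\pi_i$ are the quotient projections and the kernel $L_6$ is forced by rank and rational character to be an integral form of $\Gamma_6$. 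The right half splits because the $G$-invariants of $\Gamma(1,2) \oplus \Gamma(1,3)$ contain vectors of augmentation $2$ and $3$, and these coprime augmentations produce a $\bZ[G]$-equivariant section of the $\Gamma_1$ quotient. The remaining and most delicate step is to identify $D/L_6$ with $\Gamma(1,2,3)$: because Krull--Schmidt fails for $\bZ[\bZ/6\bZ]$-modules, matching rank and rational character alone is not sufficient, and instead I plan to transport $D/L_6$ to the block-upper-triangular normal form of \cite{K62}, verify that its minimal decorated graph is the connected path $2 - 1 - 3$, and invoke Lemma \ref{lem:graphs-determine-reps}. This graph-theoretic identification is where I expect the main technical obstacle.
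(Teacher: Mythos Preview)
Your approach is correct in outline and would establish the table, but it is substantially more elaborate than the paper's one-line argument. The paper simply restricts each tensor product to the subgroups $H_2$ and $H_3$ of orders $2$ and $3$; since restriction commutes with tensor product and the tensor structure of $\bZ[\bZ/p\bZ]$-lattices is computed in \cite{B98}, this determines each $(X\otimes Y)|_{H_p}$. The point is that all five indecomposables restrict to sums of trivial modules and copies of $\bZ[H_p]$ (no augmentation ideals), and this class is closed under tensor product, so every product in the table decomposes into the same five types. Writing the multiplicity vector as $(a,b,c,d,e)$ for $(\Gamma_1,\Gamma(1,2),\Gamma(1,3),\Gamma(1,2,3),D)$, the two restrictions give four linear equations, and integrality together with non-negativity forces a unique solution in every entry --- including your ``hardest'' case $\Gamma(1,2,3)^{\otimes 2}$, where the $H_3$-restriction gives $a+2b+d=1$, hence $b=0$ and $(a,d)\in\{(1,0),(0,1)\}$, and only $(a,d)=(1,0)$ is integrally consistent with the $H_2$-restriction equation $a+3c+2d=4$.

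Your detour through the isomorphism $\Gamma(1,2)\oplus\Gamma(1,3)\cong\Gamma_1\oplus\Gamma(1,2,3)$ is a nice observation (it is exactly the relation that makes the restriction system underdetermined before non-negativity is imposed), but extracting $\Gamma(1,2,3)^{\otimes 2}$ from it by tensoring requires cancelling a $\Gamma(1,2,3)$ summand from both sides, and you have not justified this cancellation --- Krull--Schmidt fails over $\bZ[\bZ/6\bZ]$, as you yourself note. One minor correction: the fact that $\Gamma(1,2,3)$ restricts without augmentation ideals is not a consequence of the Modular Moonshine input (which is a statement about $V^\natural$) but follows directly from the edge-erasure computation already carried out in the proof of Lemma~\ref{lem:classification-of-6A-indecomposables}.
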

\begin{proof}
This follows from restriction to the subgroups of orders 2 and 3, together with the tensor product computations in \cite{B98}.
\end{proof}

\begin{prop} \label{prop:classification-of-6A-homomorphisms}
Let $\phi: \Rep^\natural_\bZ(\bZ/6\bZ) \to \bC$ be a ring homomorphism induced by sending a generator $g$ to class 6A.  Then $\phi$ takes the tuple $(\Gamma_1,\Gamma(1,2),\Gamma(1,3),\Gamma(1,2,3),D)$ of indecomposable modules to one of the following tuples:
\begin{enumerate}
\item $(1,2,3,4,6)$, from the rank, or ``trace of 1'' homomorphism.
\item $(1,0,3,2,0)$, from the ``trace of $g^3$'' homomorphism.
\item $(1,2,0,1,0)$, from the ``trace of $g^2$'' homomorphism.
\item $(1, 0, 0, -1, 0)$, from the ``trace of $g$'' homomorphism.
\item $(1, 0, 0, 1, 0)$, from a twisted version of the trace of $g$.
\end{enumerate}
In particular, $\phi$ takes values in integers.
\end{prop}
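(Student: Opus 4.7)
The plan is to exhaust the possibilities by imposing the tensor-product relations from Lemma \ref{lem:tensor-products-for-6A} on the five unknowns $a := \phi(\Gamma_1)$, $b := \phi(\Gamma(1,2))$, $c := \phi(\Gamma(1,3))$, $e := \phi(\Gamma(1,2,3))$, $d := \phi(D)$. Since $\Gamma_1$ is the multiplicative identity, $a = 1$. The ``near-idempotent'' relations $\Gamma(1,2)^{\otimes 2} = 2\Gamma(1,2)$, $\Gamma(1,3)^{\otimes 2} = 3\Gamma(1,3)$, and $D^{\otimes 2} = 6D$ force $b(b-2) = c(c-3) = d(d-6) = 0$, so $b \in \{0,2\}$, $c \in \{0,3\}$, $d \in \{0,6\}$, while $\Gamma(1,2) \otimes \Gamma(1,3) = D$ imposes $d = bc$.

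Next I would split on $d$. If $d = 6$, then $bc = 6$ forces $(b,c) = (2,3)$, and $\Gamma(1,2) \otimes \Gamma(1,2,3) = D + \Gamma(1,2)$ gives $be = 8$, whence $e = 4$; the sanity check $e^2 = 2d + c + 1 = 16$ coming from $\Gamma(1,2,3)^{\otimes 2} = 2D + \Gamma(1,3) + \Gamma_1$ is automatic, producing the rank tuple $(1,2,3,4,6)$. If $d = 0$, then $bc = 0$ excludes $(b,c) = (2,3)$ and leaves three subcases. For $b = c = 0$, the relation $e^2 = 2d + c + 1 = 1$ gives $e = \pm 1$, producing the trace of $g$ and its sign twist. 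For $b = 0$, $c = 3$, the relation $\Gamma(1,3) \otimes \Gamma(1,2,3) = D + 2\Gamma(1,3)$ forces $3e = 6$, so $e = 2$ (consistent with $e^2 = 4$). For $b = 2$, $c = 0$, symmetrically $\Gamma(1,2) \otimes \Gamma(1,2,3) = D + \Gamma(1,2)$ forces $e = 1$. This exhausts the five tuples.

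To finish, I would identify each of the first four tuples with the claimed trace function by a direct character computation: $\Gamma(r_1, r_2, \ldots)$ has $\chi(g^k)$ equal to the sum over $i$ of $\sum_\zeta \zeta^k$, taken over primitive $r_i$-th roots of unity. Integrality of $\phi$ then follows immediately from the enumeration. The calculation itself is short; the only bookkeeping concern I foresee is confirming that each candidate assignment satisfies every entry of the tensor-product table of Lemma \ref{lem:tensor-products-for-6A}, not just those I invoked in the case analysis. I would treat this as a brief consistency check on the list of five tuples, and do not expect any further obstruction---the analysis is essentially the six-element analogue of the argument for Theorem \ref{thm:order-4-ring-homs}, with the extra simplification (noted in Lemma \ref{lem:graphs-determine-reps}) that the relevant ideal class groups are trivial.
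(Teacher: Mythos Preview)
Your argument is correct and follows essentially the same approach as the paper: you exhaust the possibilities by applying $\phi$ to the tensor relations in Lemma~\ref{lem:tensor-products-for-6A}, first pinning down $\phi(D)$ via its near-idempotence (or equivalently $D\otimes X=(\dim X)D$), then using $\Gamma(1,2)^{\otimes 2}$, $\Gamma(1,3)^{\otimes 2}$, and $\Gamma(1,2)\otimes\Gamma(1,3)$ to restrict $(b,c)$, and finally determining $e$ from the remaining products. The only cosmetic difference is that the paper splits first on whether $\phi(D)=6$ or $0$ and reads off the rank homomorphism immediately in the former case, whereas you also verify $e=4$ there via $\Gamma(1,2)\otimes\Gamma(1,2,3)$; both routes are fine.
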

\begin{proof}
From Lemma \ref{lem:tensor-products-for-6A}, $D \otimes X = (\dim X)D$ so $\phi(D)$ is $0$ or $6$.  If it is $6$, we get the rank homomorphism, so from now on we assume $\phi(D) = 0$.  Let $h_2 = \phi(\Gamma(1,2))$ and $h_3 = \phi(\Gamma(1,3))$.  Since $\Gamma(1,2)^2 = 2\Gamma(1,2)$, $\Gamma(1,3)^2 = 3\Gamma(1,3)$ and $\Gamma(1,2)\otimes \Gamma(1,3) = 0$, we have $(h_2, h_3)= (2,0), (0,3), (0,0)$.

Case $(2,0)$:  $\Gamma(1,2,3)$ is sent to $1$ because of tensoring with $\Gamma(1,2)$, so we get $\Tr(g^2)$.

Case $(0,3)$: By essentially the same argument as the $(2,0)$ case, we get $\Tr(g^3)$.

Case $(0,0)$: The tensor square of $\Gamma(1,2,3)$ is taken $1$, so $\Gamma(1,2,3)$ is taken to $\pm 1$.  We get $\Tr(g)$ from $-1$, and a twisted version of trace from $1$.
\end{proof}

\begin{thm} \label{thm:6A}
Let $\phi: \Rep^\natural_\bZ(\bZ/6\bZ) \to \bC$ be a ring homomorphism attached to a cyclic group generated by a 6A element.  Then, the power series $T_\phi(\tau)$ is one of $T_{1A}, T_{2A}, T_{3A}, T_{6A}$ or some quasi-replicable function of exponent 6, whose $q^n$ coefficient is a non-negative integer bounded below by the $q^n$ coefficient of $T_{6A} = q^{-1} + 79q + 352q^2 + 1431q^3 + \cdots$ and bounded above by the $q^n$ coefficient of $T_{3A} = q^{-1} + 783q + 8672q^2 + 65367q^3 + \cdots$.
\end{thm}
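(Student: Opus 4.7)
The plan is to combine the homomorphism classification from Proposition \ref{prop:classification-of-6A-homomorphisms} with classical Monstrous Moonshine and the quasi-replicability established in Theorem \ref{thm:main}.

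First I would apply Proposition \ref{prop:classification-of-6A-homomorphisms} to reduce to five cases, indexed by the tuples $\phi(\Gamma_1, \Gamma(1,2), \Gamma(1,3), \Gamma(1,2,3), D)$ listed there. Four of these tuples are precisely the values of the function ``trace of $g^k$'' on the five indecomposables, for $k = 0, 3, 2, 1$ in turn; since $g$ lies in class 6A, these powers lie in the Monster classes 1A, 2A, 3A, and 6A respectively. Because trace commutes with base change from $\bZ$ to $\bC$, in these four cases $T_\phi(\tau) = \sum_n \Tr(g^k \mid V^\natural_n)q^{n-1}$, and by Borcherds's proof of Monstrous Moonshine this equals $T_{1A}$, $T_{2A}$, $T_{3A}$, $T_{6A}$ respectively.

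For the remaining ``twisted'' homomorphism, for which the tuple is $(1,0,0,1,0)$, I would fix a decomposition
\[
V^\natural_{n,\bZ} \cong a_n \Gamma_1 \oplus b_n \Gamma(1,2) \oplus c_n \Gamma(1,3) \oplus d_n \Gamma(1,2,3) \oplus e_n D
\]
into indecomposables with non-negative integer multiplicities, guaranteed by Lemma \ref{lem:classification-of-6A-indecomposables}. Reading off $\phi$ term by term, the $q^{n-1}$ coefficient of $T_\phi$ equals $a_n + d_n$, whereas the corresponding coefficients of $T_{3A}$ and $T_{6A}$ are $a_n + 2b_n + d_n$ and $a_n - d_n$ respectively. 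Non-negativity of $b_n$ and $d_n$ then yields the coefficient-wise bounds $[q^{n-1}]T_{6A} \leq [q^{n-1}]T_\phi \leq [q^{n-1}]T_{3A}$, while $a_n + d_n \in \bZ_{\geq 0}$ gives non-negativity and integrality of the coefficients. Quasi-replicability of exponent $6$ is immediate from Theorem \ref{thm:main}.

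No genuine obstacle arises in the proof as stated: once the five homomorphisms are enumerated, four are matched with classical Hauptmoduln via Borcherds's theorem, and the fifth is controlled by non-negativity of integer multiplicities together with Theorem \ref{thm:main}. The real difficulty, which the authors flag as an expected further result rather than prove here, is to show that the twisted $T_\phi$ actually coincides with $T_{6A}$. That sharper statement seems to require strengthening the quasi-replicability machinery toward something closer to Conway--Norton replicability, and is not obviously accessible from the tools of this paper.
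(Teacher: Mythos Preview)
Your proposal is correct and follows essentially the same approach as the paper: invoke Proposition \ref{prop:classification-of-6A-homomorphisms} to enumerate the five homomorphisms, identify four as traces yielding the McKay--Thompson series for powers of $g$, and for the fifth use the indecomposable decomposition of $V^\natural_{n,\bZ}$ together with non-negativity of multiplicities to sandwich the coefficients between those of $T_{6A}$ and $T_{3A}$, with quasi-replicability coming from Theorem \ref{thm:main}. The paper phrases the bounding step slightly more abstractly (observing that the twisted homomorphism's value on each indecomposable lies between the corresponding values of $\Tr(g)$ and $\Tr(g^2)$), but your explicit computation with multiplicities is the same argument unpacked.
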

\begin{proof}
By Proposition \ref{prop:classification-of-6A-homomorphisms}, we have 5 possible ring homomorphisms, and 4 of them are traces, yielding the McKay-Thompson series attached to powers of $g$.  The last homomorphism  yields a power series whose coefficients satisfy the claimed bounds, because the evaluation on any indecomposable module satisfies the same bounds.  Quasi-replicability follows from Theorem \ref{thm:main}.
\end{proof}

\begin{conj}
The extra function is equal to $T_{6A}$.  Equivalently, $\Gamma(1,2,3)$ does not appear in the 6A-decomposition of $V^\natural_\bZ$.
\end{conj}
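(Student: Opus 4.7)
The plan is to show that the multiplicity $c_n$ of $\Gamma(1,2,3)$ in the 6A-decomposition of $V^\natural_{n,\bZ}$ vanishes for every $n \geq 0$. From the four trace homomorphisms of Proposition \ref{prop:classification-of-6A-homomorphisms} together with Monstrous Moonshine for the classes 1A, 2A, 3A, 6A, the remaining multiplicities $a_n, b_n, d_n, e_n$ of $\Gamma_1, \Gamma(1,3), \Gamma(1,2), D$ are each explicit integer linear combinations of $c_n$ and the known coefficients of $T_{1A}, T_{2A}, T_{3A}, T_{6A}$, and the exotic homomorphism $\phi$ satisfies $T_\phi(\tau) = T_{6A}(\tau) + 2\sum_{n \geq 0} c_n q^{n-1}$. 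So it suffices to establish this single sequence of vanishings.

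First I would compute the Adams operations $\psi^k$ on each of the five indecomposables $\Gamma_1, \Gamma(1,2), \Gamma(1,3), \Gamma(1,2,3), D$. Since $\chi_{\psi^k(X)}(g^j) = \chi_X(g^{jk})$ pins down $\psi^k(X) \otimes \bC$, and by Lemma \ref{lem:graphs-determine-reps} each integral indecomposable is determined by its decorated graph (readable off from the complex character together with the restrictions to $\langle g^2 \rangle$ and $\langle g^3 \rangle$), each $\psi^k(X)$ will be explicitly expressible as a $\bZ$-linear combination of the five indecomposables. This makes $\phi(\psi^k(X))$ explicit for every $k$ and every $X$, and so the quasi-replicates $T_\phi^{[m]}(\tau) = \sum_{ad=m}\frac{1}{a}\sum_{n \geq -1}\phi(\psi^a(\fm_{d,n,\bZ}))q^{an}$ become explicit sums of the four known Hauptmoduln and the unknown series $\sum c_n q^{n-1}$.

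Second, I would substitute these into Corollary \ref{cor:vanishing-coefficients}: the $p^a q^b$ coefficient of $\exp(-\sum_m p^m T_\phi^{[m]})$ vanishes whenever $(a-1)b$ is coprime to $6$. Evaluating at low bidegrees, in particular $a=2$ with $b$ coprime to $6$, should yield a recursion for $c_n$ in terms of $c_1,\dots,c_{n-1}$ and the coefficients of the Hauptmoduln. Combined with the trivial base cases $c_0 = c_1 = 0$ (from $V^\natural_0 = \bZ$ and $V^\natural_1 = 0$), this would give $c_n = 0$ by induction on $n$.

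The principal obstacle is that quasi-replicability is strictly weaker than Conway-Norton replicability, so the recursion may well fail to be invertible at many bidegrees. This ambiguity ultimately reflects the ``mysterious'' $\bZ[G]$-module structure on the non-primitive root spaces $\fm_{m,n,\bZ}$ with $\gcd(m,n) \in \{2,3,6\}$, which enter $T_\phi^{[m]}$ precisely when $m$ shares a factor with $6$. Overcoming this will likely require additional input, of one of the following two flavors: either direct control over these mysterious root spaces via the cyclic orbifold construction of $V^\natural_\bZ$ from $V_\Lambda$ in \cite{C17}, which is compatible with both the $2$-part and the $3$-part of any order-$6$ automorphism and should cut down the ambiguity; or a modular-forms argument in the spirit of \cite{CG97} showing that any quasi-replicable function with non-negative integer coefficients whose asymptotic growth matches that of $T_{6A}$ must already be modular of level dividing $6$, at which point the bounds of Theorem \ref{thm:6A} would force the equality $T_\phi = T_{6A}$.
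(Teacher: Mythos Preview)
This statement is a \emph{conjecture} in the paper, not a theorem: the authors explicitly leave it open and provide no proof. So there is nothing to compare your proposal against.

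Your outline is a reasonable plan of attack, and you have correctly identified the genuine obstruction: quasi-replicability of exponent $6$ gives vanishing only at bidegrees $(a,b)$ with $(a-1)b$ coprime to $6$, and this is too sparse to yield an invertible recursion determining all $c_n$ from finitely many initial values. Concretely, the $a=2$ relations only constrain coefficients at $q^b$ for $b$ coprime to $6$, so coefficients $c_n$ at $n \equiv 0 \pmod 2$ or $n \equiv 0 \pmod 3$ are not directly reached; and the higher-$a$ relations involve the non-primitive root spaces $\fm_{d,n,\bZ}$ with $\gcd(d,n) > 1$, whose $\bZ[G]$-module structure the paper labels ``mysterious'' in Corollary~\ref{cor:description-of_monster-Lie-algebra}. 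Your two suggested escape routes (direct control over these root spaces via the orbifold construction, or a Cummins--Gannon-type modularity argument adapted to quasi-replicability) are both plausible research directions, but neither is carried out, and the paper itself does not claim either works. In short: what you have written is a sketch of how one might try to prove the conjecture, together with an honest diagnosis of why it does not go through as stated---which is exactly the state of affairs in the paper.
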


As we mentioned in the introduction, we expect similar behavior for all elements of type $pq$A for $p,q$ distinct primes - these are the elements whose powers are all Fricke-invariant.  The main difficulty is proving a suitable substitute for Lemma \ref{lem:graphs-determine-reps}.  For other elements of order $pq$, or more generally other elements of square-free composite order, we have less control over the coefficients of the functions $T_\phi$ and indecomposable submodules, but we can still put bounds on multiplicities.

\section{Open problems}

\begin{enumerate}
\item Are quasi-replicable functions ``almost all modular''?  If we remove the ``quasi'' prefix, we know that replicable functions of finite order are either ``modular fictions'' (i.e., of the form $q^{-1} + aq$ for $a=0$ or $a$ a root of unity) or Hauptmoduln of finite level, when their coefficients are algebraic integers \cite{C08}.  In fact, if $f(\tau) = \sum_n a_n q^n$ is replicable of finite order, and $\sum_{n \in \bZ} n|\sigma(a_n)|^2 > 1$ for some automorphism $\sigma$ of $\bC$, then $f$ is a Hauptmodul.
\item For which $R$ and $G$ are the Adams operations for the $R[G]$-module structure on $V^\natural_R$ periodic?  That is, for which group rings do we have some $N$ such that $\psi^k(V^\natural_{n,R}) = \psi^{k+N}(V^\natural_{n,R})$ for all $n,k \geq 0$?  This is a natural question in light of the previous one, since it implies the finite order property on the quasi-replicable functions.  When $|G|$ is invertible in $R$, periodicity follows from the fact that all homomorphisms are traces of elements.
\item Does every ring homomorphism $\Rep^\natural_R(G) \to \bC$ take each $V^\natural_{n,R}$ to an algebraic integer?  This may help when trying to show that a series is a Hauptmodul.
\item How bad can the ring $\Rep^\natural_R(G)$ get?  We could optimistically hope that it is finite rank for $R=\bZ$ and $G = \bM$, and we could pessimistically consider the possibility that even for the minimal examples $R \cong \bZ_2$ and $G = \bZ/2\bZ \times \bZ/2\bZ$ or $G = \bZ/8\bZ$ where $\Rep_R(G)$ is infinitely generated, $\Rep^\natural_R(G)$ may also be infinitely generated \cite{N61}, \cite{HR62}, \cite{HR63}.  The case of a cyclic group generated by a 4A element, where by Theorem \ref{thm:4A} we see only 3 out of a possible 9 indecomposable representations of $\bZ/4\bZ$, is evidence (perhaps weak) that some significant simplification can happen.  Specific question: what is the smallest subgroup $G$ of $\bM$, if one exists, such that $\Rep^\natural_\bZ(G)$ is infinitely generated?
\item For which subgroups $G < \bM$ and subrings $R \subset \bC$ is $V^\natural_R$ asymptotically regular as an $R[G]$-module?  That is, do the maximal free $R[G]$ summands $M_n \subset V^\natural_{n,R}$ satisfy $\lim_{n \to \infty} \frac{\dim M_n}{\dim V^\natural_n} = 1$?  By Corollary 8.2 in \cite{DGO15}, this is true for $R = \bC$ and $G$ arbitrary, and this is shown in more generality in \cite{AB19}.  The results of Modular Moonshine also imply asymptotic regularity for arbitrary $R$ and $G$ cyclic of prime order.  Our results in section 6 imply asymptotic regularity for $R$ any subring of $\bC$ and $G$ a cyclic group generated by an element of type 4A, 4B, or 6A.  
\item What if instead of considering homomorphisms to $\bC$ to make coefficients of power series, we allowed for a target ring that didn't annihilate torsion, such as $\Rep^\natural_R(G)$ itself?  This would let us consider finer-grained information about representations, and in particular we could distinguish isomorphism types in ideal classes.  However, if we consider modular forms with torsion coefficients, then the notion of Hauptmodul in this setting may need revision.
\item Can one construct a theory of twisted modules attached to general homomorphisms from representation rings?  This seems like the most promising potential route to a general modularity result, generalizing \cite{DLM97}.
\end{enumerate}

\noindent{\textbf{Acknowledgements}} This work was supported by JSPS Kakenhi Grant-in-Aid for Young Scientists (B) 17K14152.

\end{document}